\documentclass[a4paper,11pt,reqno,english]{amsart}
\usepackage{anysize,color} \usepackage[utf8]{inputenc}
\usepackage[dvipsnames]{xcolor}
\usepackage[colorlinks=true,urlcolor=blue,linkcolor={magenta},citecolor={orange}]{hyperref}  
\usepackage{float}
\usepackage{amsfonts,amssymb,amsmath}
\usepackage{amsthm}
\usepackage{url}
\usepackage{paralist}
\usepackage[mathscr]{euscript}
\usepackage{charter, babel}
\usepackage{bbold}
\usepackage{float,graphicx}
\usepackage[arrow,curve,matrix,tips,2cell, all]{xy}

\theoremstyle{plain}
\newtheorem{theorem}{Theorem}[section]
\newtheorem{lemma}[theorem]{Lemma}

\newtheorem{corollary}[theorem]{Corollary}

\newtheorem*{theorem*}{Theorem}
\newtheorem*{corollary*}{Corollary}

\newtheorem*{claim*}{Claim}
\newtheorem*{lemma*}{Lemma}

\theoremstyle{definition}
\newtheorem{definition}[theorem]{Definition}
\newtheorem*{definition*}{Definition}

\newtheorem{example}[theorem]{Example}

\newtheorem{question}[theorem]{Question}

\newcommand\R{\mathbb{R}}
\newcommand{\Ecal}{\mathcal{E}}
\DeclareMathOperator{\NashEquilibria}{Neq}

\allowdisplaybreaks

\author[Blagojevi\'c]{Matija Blagojevi\'{c}}
\thanks{ 
	The research by Matija Blagojevi\'c leading to these results has received funding from Studienstiftung des deutschen Volkes through their Promotionsstipendium program.
}
\email{matib03@zedat.fu-berlin.de}
\address{Freie Universit\"{a}t, 14195 Berlin, Germany}
\author[Sch\"{u}tte]{Christof Sch\"{u}tte}
\email{schuette@zib.de}
\address{Zuse Institut, 14195 Berlin, Germany}
\title{Existence of Multilateral Nash equilibria for families of games}

\begin{document}
\begin{abstract}
	This paper introduces two fundamentally new concepts to game theory: multilateral Nash equilibria and families of games. Starting with non-cooperative games in normal form, we show how these notions together seamlessly integrate into and naturally extend the classical theory, and simultaneously enable us to prove a powerful (multilateral) Nash equilibrium existence result with minimal assumptions on the non-cooperative game.

	Classically, a Nash equilibrium of a non-cooperative game is a global strategy with the property that whichever player unilaterally deviates from the equilibrium, also reduces his own profit. For a $k$-lateral Nash equilibrium we now require that whichever group of $k$ players collectively changes their strategies, also reduces all of the deviating players' profits. In this way, we obtain a filtration of equilibria, where the higher-lateral equilibria are less frequent. Intuitively, higher-lateral equilibria are increasingly stable against formations of coalitions and even more strongly discourage deviations. Furthermore, we derive an existence criterion for multilateral Nash equilibria and demonstrate how it reflects the increasing rarity of higher-lateral equilibria.

	Additionally, we show that some classical games have higher-lateral Nash equilibria, which in every case reveal the structure of these games in a descriptive and intuitive way, from a new point of view.

	Many external conditions may influence a game. A family of games is a parameterized collection of non-cooperative games, where the parameter affects every aspect of the game. Typically, we assume that this dependence is continuous, thereby introducing a new structure. That way, we can avoid analyzing the games one at a time, and instead treat the family as a whole. This allows the parameter to take a central role in our theory, and shifts our attention from seeking a special strategy to searching for a special game with preferred strategies.

	Though multilateral equilibria are increasingly rare in one particular game, our main result proves the existence of a multilateral equilibrium in a family of games. By exploiting the availability of the parameter space, we are able to maintain minimalistic assumptions on the games individually and still find a multilateral equilibrium. Surprisingly, the clique covering number of the Kneser graph makes a central appearance.

	To illustrate the relevance of the newly introduced notions we conclude with two applications. The first shows that there exist games with multilateral equilibria in the family of finite games, thereby opening the door for new fundamental research possibilities in this particularly intensely researched subfield of game theory. The second application expands the classical work of Nobel laureates Kenneth Arrow \& G\'erard Debreu from 1954, which modelled the state economy consisting of producers, consumers and the state itself. We show that the classical microeconomic assumptions are sufficient for the state to be able to ensure the existence of arbitrarily high-lateral Nash equilibria in its economy.
\end{abstract}
\maketitle

\vspace{-2em}

\section*{Introduction}

Imagine that you and your friend are trying to plan a trip together with two activities in mind, camping or going to an amusement park.
Since you are together on this trip, each one of you would be unhappy if you attend an activity alone.
However, both of you find the amusement park more engaging than going camping.
So what should you decide?

\medskip
For a moment, assume you cannot influence the activity choice of your friend.
In that case, to maximize enjoyment, your decision should mirror what your friend does: If he goes camping, go with him, if he opts for the amusement park, follow along.
If you cannot persuade your friend, there is no way for you to guarantee the maximum enjoyment.
This situation of the two comrades can be modelled as a non-cooperative game in normal form, a variant of the so-called date dilemma.

\medskip
Let $N\in \mathbb{N}$ be a fixed integer representing the number of players or participants in our non-cooperative game, labelled with the integers $\{1,\dots,N\}$.
In our example $N=2$.
Each player $i$ has a strategy space $E_i$ with $E:= E_1\times\cdots\times E_N$ being the global strategy space.
Thus, in our example we have that $E_1=E_1=\{C, A\}$, where $C$ stands for camping and $A$ for amusement park, and so the global strategy space is $E=\{(C,C),(C,A),(A,C),(A,A)\}$.
Player $i$ ``plays'' non-cooperative game by selecting a strategy $x_i\in E_i$.
An $n$-tuple $(x_1,\dots,x_n)\in E$ is called a global strategy.
 
\medskip
Each player $i$ has a payoff function $\theta_i\colon E \longrightarrow \R$, classically referred to as ``utility function'', ``target function'', ``profit function'', etc., which records his preferences.
Of course each player wants to maximize their payoff function.
In our example we set
\begin{displaymath}
	\theta_1=
	{\small
	\left( \begin{array}{cccc}
			(C,C) & (C,A) & (A,C) & (A,A) \\
			3     & 0     & 0     & 5
		\end{array} \right)
	}
	\qquad\text{and}\qquad
	\theta_2=
	{\small
	\left( \begin{array}{cccc}
			(C,C) & (C,A) & (A,C) & (A,A) \\
			3     & 0     & 0     & 5
		\end{array} \right)
	}
\end{displaymath}

\medskip
Thinking back to the case when you cannot influence your friend,
one of the classical notions for preferable decisions is a Nash equilibrium.
A global strategy $x\in E$ is called a Nash equilibrium, if for every player $i$ and for every possible \textit{unilateral} change in strategy $y_i\in E_i$ of player $i$, the payoff $\theta_i(x_1,\dots,y_i,\dots,x_N)$ which player $i$ receives is worse than the payoff $\theta_i(x_1,\dots,x_i,\dots,x_N)$, which he would've received in the Nash equilibrium. So
\begin{displaymath}
	\theta_i(y_i,x_{-i}):= \theta_i(x_1,\dots,y_i,\dots,x_N) \leq \theta_i(x_1,\dots,x_i,\dots,x_N),
\end{displaymath}
for all players $1\leq i\leq N$ and for all strategies $y_i\in E_i$. In short, we could define a Nash equilibrium as a global strategy such that no player can \textit{unilaterally} increase his profit.

\medskip
The fact that in a Nash equilibrium we only consider unilateral changes motivates the original name of \textit{non-cooperative} game, since the possibility of two or more players cooperating in agreement is ruled out.

\medskip
Evidently, our example has two Nash equilibria, which are the global strategies where both friends agree, so $(C,C)$ and $(A,A)$. If in these global strategies either of the friends changes his strategy, he will reduce his own enjoyment.

\medskip
If the friends wish to get the maximum possible enjoyment, they both need to simultaneously agree on $(A,A)$. To achieve this, they may need to undertake a \textit{bilateral} change in strategies. The reality that sometimes the all-around best strategy requires the cooperation of multiple players motivates the definition and study of a \textit{higher-lateral} Nash equilibrium.
In other words, we are interested in global strategies $x\in E$ such that, for example, for any subset of two (distinct) players $I:=\{i_1, i_2\}\subseteq [N] :=\{1,\dots,N\}$ and any two strategies $(y_{i_1}, y_{i_2})\in E_{i_1}\times E_{i_2}$, the payoff of either player is worse, that is
\begin{displaymath}
	\theta_{i_1}(y_I,x_{-I}):=\theta_{i_1}(x_1,\dots,y_{i_1},\dots,y_{i_2},\dots,x_N) \leq \theta_{i_1}(x_1,\dots,x_i,\dots,x_N),
\end{displaymath}
or
\begin{displaymath}
	\theta_{i_2}(y_I,x_{-I}):=\theta_{i_2}(x_1,\dots,y_{i_1},\dots,y_{i_2},\dots,x_N) \leq \theta_{i_2}(x_1,\dots,x_i,\dots,x_N).
\end{displaymath}
We call such a global strategy $x$ a \textit{bilateral} or $2$-\textit{lateral} Nash equilibrium.
In an analogous way for any integer $k$, with $1\leq k\leq N$, we can define a $k$-lateral Nash equilibrium.
This way we obtain a descending filtration over the set of all Nash equilibria.
A natural question to ask is whether for a (class of) non-cooperative game there exist $k$-lateral Nash equilibria for some $1\leq k\leq N$?

\medskip
In the first part of this paper we formally introduce multilateral Nash equilibria and demonstrate how many classical notions of non-cooperative game theory seamlessly carry over. 
We discuss the relevant best-reply correspondences, the Nikaido--Isoda function, and we derive existence criteria which are generalizations of the classical results for non-cooperative games, so in particular they include the case of the usual ($1$-lateral) Nash equilibrium. 

\medskip
Next, a notion of families of games is introduced.
At first glance, the definition is exactly what one may expect: A family of $N\in\mathbb{N}$-player games is a collection of non-cooperative games parameterized by some parameter space $B$. So in particular the strategy spaces and profit functions depend on the parameter $b\in B$.
Even with such a simple definition, almost all classical notions extend from non-cooperative games, now to families of games.
The most relevant case considered is when the family of games is not merely a parameterized collection, but rather a fiber bundle over the parameter space.
At this point, we exhibit how the family obtains a unifying structure with the parameter space playing a central role in the new theory.
We conclude this part by interconnecting the two notions we've introduced, and we prove numerous multilateral Nash equilibrium existence criteria for families of games. One particular highlight is the following result, Corollary~\ref{cor:Main_result_01_02}:

\begin{corollary*}
	Let $p$ be a prime, $N\geq 1$ an integer, and let $\Ecal/B:=(\Ecal^{(b)}:b\in B)$ be a family of $N$-player non-cooperative games with the following properties
	\begin{compactitem}[\quad --]
		\item $B$ is a compact $\mathbb{F}_p$-orientable manifold,
		\item $E_i$ is a fiber bundle over $B$ with compact total space $E_i$, which is a subbundle of a finite dimensional (real) vector bundle $V_i$ over $B$ for all $1\leq i\leq N$ with each fiber being convex,
		\item $E:=E_1\times_B\cdots \times_BE_N$ is the fiber product bundle over $B$ and is also a subbundle of the fiber product of vector bundle $V:= V_1\times_B\cdots\times_B V_N\cong V_1\oplus\cdots\oplus V_N$,
		\item $\theta_i^{(b)}(\cdot, x_{-i})\colon E_i\longrightarrow\mathbb{R}$, is quasi-concave for every $b\in B$, every $i\in [N]$ and every $x_{-i}\in E_{-i}$.
	\end{compactitem}

	\smallskip\noindent
	If the monomial $e(V_1)^{\xi(N,k)}\cdots e(V_N)^{\xi(N,k)}$ in $\mathbb{F}_p$ Euler classes of vector bundles $V_1,\dots, V_N$ does not vanish in the cohomology $H^*(B;\mathbb{F}_p)$, then there exists a parameter $b\in B$ such that the non-cooperative game $\Ecal^{(b)}$ from the family $\Ecal/B$ has a $k$-lateral Nash equilibrium.
	Here $\xi(N,k)$ denotes the clique covering number on the Kneser Graph $K(N,k)$.
\end{corollary*}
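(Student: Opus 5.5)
The plan is to reduce the existence of a $k$-lateral equilibrium in some fiber to the vanishing of a section of a suitable vector bundle over $B$. Nonvanishing of the Euler class then forces a zero.

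\emph{Step 1: a fiberwise characterization.} Fix a clique cover of the Kneser graph $K(N,k)$ by $\xi:=\xi(N,k)$ cliques $C_1,\dots,C_\xi$. The vertices are the $k$-subsets $I\subseteq[N]$, and adjacency means disjointness. Each clique is therefore a family of pairwise disjoint $k$-subsets. For a clique $C_j$, consider the fiberwise "block best-reply" problem in which every coalition $I\in C_j$ simultaneously best-responds collectively to $x_{-I}$.

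Because the coalitions in $C_j$ are disjoint, this is a single map on $E$, built as in the classical Nash/Kakutani setup. I would make it single valued by a standard smoothing: use the Nikaido--Isoda type function restricted to the coalitions in $C_j$ plus a strictly concave penalty, or a continuous selection. Quasi-concavity and convexity of the fibers are exactly what this requires.

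Then $x\in E^{(b)}$ is $k$-lateral if and only if it is fixed simultaneously by the $\xi$ maps $f_1^{(b)},\dots,f_\xi^{(b)}$. The reason is that every $k$-subset lies in some $C_j$, and each $k$-subset involves each player's coordinate at most once per clique. I would use the earlier-established multilateral Nikaido--Isoda criterion for the $k$-lateral equilibrium property.

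\emph{Step 2: the section.} Form the bundle $W:=V^{\oplus\xi}\cong\bigoplus_i V_i^{\oplus \xi}$ over $E$, pulled back along $E\to B$. Define the section
\[
s(x)=\bigl(f_1(x)-x,\dots,f_\xi(x)-x\bigr).
\]
Zeros of $s$ are exactly the $k$-lateral equilibria in some fiber.

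\emph{Step 3: topology.} Since fibers of $E$ are compact convex sets, $E\to B$ is fiberwise contractible, so $\pi^*\colon H^*(B)\to H^*(E)$ is an isomorphism. The Euler class of $\pi^*W$ is $\pi^*\bigl(e(V_1)^{\xi}\cdots e(V_N)^{\xi}\bigr)$. If $s$ had no zeros, this class would vanish in $H^*(E;\mathbb F_p)$, hence in $H^*(B;\mathbb F_p)$, contradicting the hypothesis.

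This argument is cleanest after replacing $E$ by a fiberwise retraction of $V$ onto $E$. The section is then defined on all of $V$, and one checks via the sphere bundle that zeros occur only over $E$. Orientability of $B$ over $\mathbb F_p$ is used to make the Euler classes well defined with $\mathbb F_p$ coefficients, if $V_i$ is not itself oriented.

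\emph{Main obstacle.} I expect the hardest part to be Step 1. The best-reply correspondences of coalitions are only convex valued (through fiberwise quasi-concavity), and they are upper hemicontinuous rather than continuous. So one must either approximate them by continuous maps (Cellina) and pass to a limit using compactness of $E$, or argue with an approximate zero plus a closedness argument. I would take the approximation route: for each $\varepsilon$ obtain a zero $x_\varepsilon$, then take a limit point, which is a $k$-lateral equilibrium by closedness of the graph.
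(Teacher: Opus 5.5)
\textbf{There is a genuine gap in Step 1, and it cannot be repaired under the stated hypotheses.}

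Your Steps 2--3 are sound, and they form a cleaner mechanism than the paper's. The paper applies the intersection Lemma~\ref{lem:intersection_lemma} to the diagonal and to the grouped graphs $\Pi\big(\bigcup_e\widetilde\Phi_{\mathcal P_i}(e)\big)$, and gets injectivity in cohomology by calling $\mu$ and $\pi$ Vietoris maps. Your argument shows directly that for \emph{any} continuous fiberwise maps $f_1,\dots,f_\xi\colon E\to E$, the nonvanishing of $e(V)^\xi$ together with injectivity of $\pi^*$ forces a common fixed point. One small correction: orientability of $B$ plays no role in defining $\mathbb{F}_p$ Euler classes of the $V_i$; for odd $p$ that needs orientations of the bundles themselves.

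Because this half is valid, everything rests on Step 1, and Step 1 cannot be carried out for $k\geq 2$. For $k=1$ it is the classical construction: $\Phi_{\{i\}}(x)$ is nonempty by compactness and convex by own-coordinate quasi-concavity. For a coalition with $|I|\geq 2$, however, the block best reply is $\Phi_I(x)=\bigcap_{i\in I}\operatorname{argmax}_{y_I\in E_I}\theta_i(y_I,x_{-I})$. This is a joint deviation that is simultaneously optimal for \emph{every} member of $I$.
\begin{itemize}
\item Nothing in the hypotheses makes this set nonempty, since members generally have different favourite joint deviations.
\item Quasi-concavity in the own coordinate does not even make the individual argmax sets over $E_I$ convex.
\item So there is nothing to select from or to Cellina-approximate.
\item A regularized Nikaido--Isoda map built from a sum is single valued (given concavity in $y_I$), but its fixed points only maximize $\sum_{i\in I}\theta_i(\cdot,x_{-I})$, which is strictly weaker.
\item The $\max$ version that Theorem~\ref{thm:MultilateralNikaidoIsodaTheorem} requires is not concave, as the paper itself remarks.
\end{itemize}
A quick test shows that no construction can work. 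For $k=N$ we have $\xi(N,N)=1$, so Step 1 would give a continuous $f\colon E^{(b)}\to E^{(b)}$ with $\operatorname{Fix}(f)=\NashEquilibria_N(\Ecal^{(b)})$. Brouwer would then give \emph{every} such game an $N$-lateral equilibrium. This already fails for $E_1=E_2=[-1,1]$, $\theta_1=x_2-x_1^2$, $\theta_2=-x_1-x_2^2$, whose unique maximizers over $E$ are $(0,1)$ and $(-1,0)$.

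In fact the statement itself is false, so no repair is possible under its hypotheses. Take the following family:
\begin{itemize}
\item $p=2$ and $B=\R P^2$;
\item $V_1=V_2=\gamma$, the tautological line bundle, and $E_i$ its unit disk bundle;
\item $\theta_1^{(L)}(x_1,x_2)=-|x_1|^2-|x_2|^2$ and $\theta_2^{(L)}(x_1,x_2)=|x_1|^2-|x_2|^2$.
\end{itemize}
Every hypothesis holds, since each $\theta_i$ is concave in $x_i$. Moreover $\xi(2,2)=1$ and $e(V_1)e(V_2)=w_1(\gamma)^2\neq0$ in $H^2(\R P^2;\mathbb{F}_2)$. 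Yet in every fiber $\theta_1$ is maximized only at $(0,0)$, where $\theta_2=0<1=\max\theta_2$. So no game of the family has a $2$-lateral equilibrium.

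The paper's proof has the same hole, in two places:
\begin{itemize}
\item Calling $\mu$ a Vietoris map requires $\widetilde\Phi_{\mathcal P_i}(e)\neq\emptyset$ for all $e$; in this example $\Phi_{\{1,2\}}(e)=\emptyset$ for every $e$.
\item Lemma~\ref{lem:MultilateralBestReplyConvex} needs quasi-concavity of $\theta_i(\cdot,x_{-I})$ on all of $E_I$, not merely in the own coordinate.
\end{itemize}
A correct version needs an extra hypothesis guaranteeing nonempty convex coalition best replies. That hypothesis, not the topology, is the missing ingredient. With it added, your Cellina-plus-Euler-class route is a workable way to finish.
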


\medskip
It turns out that, as illustrated in the previous corollary, for one of our central results, Theorem \ref{thm:Main_result_02_02}, the condition on the parameter space involves the clique covering number $\xi(N,k)$ on the Kneser Graph $K(N,k)$, where $N$ is the number of players and $k$ is the laterality of the Nash equilibrium. 
This number arises thanks to the structure of the best-reply correspondence in the multilateral case, and is actually a natural generalization of a nearly obvious phenomenon in the case of $k=1$: The Kneser Graph $K(N,1)$ is the complete graph, so its clique covering number is $1$, which turns out to be the reason why there is just one ``global'' best-reply correspondence of a non-cooperative game in the classical case.

\medskip
Finally, we show two applications of the introduced notions and established existence results.
The first one begins with the observation that all $N$-player finite games constitute a single family of games. 
We then demonstrate that multilateral equilibria indeed do appear, and not infrequently: The family of $3$-player games contains infinitely many games with $2$-lateral equilibria. 

\medskip
The second application pays homage to the historical origins of game theory in economy, and is closely related to the famous work of Nobel laureates Kenneth Arrow \& G\'erard Debreu. 
One of the highlights of their result was the unification of three distinct economic theories, which before them were typically~\cite{HendersonQuandt1958, Baumol1965} considered individually: the theory of consumers, the theory of producers as well as the policies of the state. 
In Arrow and Debreu's model, the state has the special task of determining the prices of commodities such that commerce in the economy is maximized. 
The state is modelled as yet another interest-driven player who fairly competes with all the producers and consumers. Arrow and Debreu named the model they created ``Abstract Economy'', which became the namesake of abstract economies, also known as generalized Nash equilibrium problems. 
Their result is the first Nash equilibrium existence result of a --- or rather \textit{the} --- Abstract Economy.

\medskip
Our example is heavily inspired by the original Abstract Economy and also connects the classical theories of consumers and producers. 
However, we are also interested in the existence of multilateral Nash equilibria, as they have a very natural interpretation in economy: In a $k$-lateral Nash equilibrium, no group of $k$ players can gain any profit. 
Thus, if the state can achieve a high-lateral equilibrium, it can be confident that no rational group of producers or consumers would ever deviate from the equilibrium global strategy. 
This is why a high-lateral Nash equilibrium is desirable for the state: it ensures stability, predictability and resilience against deviations of larger and larger groups of players.

\medskip
Unlike Arrow and Debreu we follow a more traditional but more realistic assumption, that the market prices are determined by the behavior of the consumers and producers, instead of assuming that they are fixed by the state. 
For the prices, we use the Cournot competition rule, which says that prices decrease as supply increases.
As far as the state is concerned, we instead give it slightly different task: To ensure the availability of resources and appropriately plan the production. 
We show that in our model, it is possible for the state to create an economy which has an arbitrarily high-lateral Nash equilibrium assuming there are enough production possibilities and enough available resources. 
So, in these conditions, not only does there exist an equilibrium, but it can also be arbitrarily stable and resilient.

\medskip
Finally, the newly introduced concepts in this paper bring to light essential and complex open problems highly relevant to questions across various fields which could not have been studied before.

\part{Multilateral Nash equilibria}

In this section we formally introduce a new concept of $k$-lateral Nash equilibrium of an $N$-player non-cooperative game, where $1\leq k\leq N$ are integers.
An analogous notion can even be introduced in the context of abstract economies and repeated games. Initially, we opt for the simplest setup, so we can focus on the importance of the new concept while avoiding the complexities of the generalizations of non-cooperative games.
We will comment on the generalizations afterwards.

\section{Definition and basic properties}\label{sec:multilateral_nash_equilibria}

We start by introducing the notion of a non-cooperative game and recalling the definition of a Nash equilibrium.
For more details consult for example \cite[Sec.\,4.1]{Ichiishi_1983}.

\begin{definition}\label{def:game_in_normal_form}
	Let $N\geq 1$ be an integer.
	An $N$-\textit{player non-cooperative game}, $\Ecal=(E_i,\,\theta_i : 1\leq i\leq N)$ is given by the following data
	\begin{compactitem}[\quad --]
		\item a collection of $N$ (metric) spaces $(E_1,\dots,E_N)$, with $E:=E_1\times \cdots\times E_N$, and
		\item a collection of $N$ (continuous) functions $(\theta_1,\dots,\theta_N)$, where $\theta_i\colon E\longrightarrow \R$ for all $1\leq i\leq N$.
	\end{compactitem}
	In the case when at least one of the sets $E_i$ is empty, the associated game is called the {\em empty game}.
\end{definition}

Typically, $N$ stands for the number of players, $E_i$ is called the {\em set of strategies}, of the $i$-th player, and $\theta_i$ is the so-called {\em profit or utility function} of the player $i$.
The product $E$ is called the \textit{set of global strategies}.

\medskip
One of the central problems in the study of non-cooperative games is understanding the so-called Nash equilibria of the game. We begin with a complete definition.

\begin{definition}
	\label{def:NashEquilibrium}
	Let $N\geq 1$ be an integer and let $\Ecal=(E_i,\,\theta_i : 1\leq i\leq N)$ be an $N$-player non-cooperative game.
	A global strategy $x=(x_1,\dots,x_N)\in E$ is a {\em Nash equilibrium} of the game $\Ecal$ if for every $1\leq i\leq N$ it holds that
	\begin{displaymath}
		\theta_i(x) \,= \, \sup\big\{ \theta_i(x') : x'\in \{x_1\}\times \dots\times \{x_{i-1}\}\times E_i\times \{x_{i+1}\}\times \dots\times \{x_N\}\big\},
	\end{displaymath}
	or equivalently, for all $y_i\in E_i$
	\begin{displaymath}
		\theta_i(x) \, \geq \, \theta_i(x_1,\dots,x_{i-1},y_i,x_{i+1},\dots,x_N) =: \theta_i(y_i, x_{-i}).
	\end{displaymath}
	The set of all Nash equilibria of the game $\Ecal$ will be denoted by $\NashEquilibria(\Ecal)$.
\end{definition}

\medskip
Here we introduced a common notation in the context of Nash equilibria: If $b\in A_1\times\dots\times A_n$, $1\leq i\leq n$ and $a_i\in A_i$, then $(a_i, b_{-i}) \in A_1\times\dots\times A_n$ is the $n$-tuple obtained from $b$ by replacing the $i$-th coordinate with $a_i$. So $(a_i, b_{-i}) := (b_1,\dots,b_{i-1},a_i,b_{i+1}\dots,b_N)$.

\medskip
Restating the previous definition in words, a Nash equilibrium is a state of the game in which, whoever \textit{unilaterally} changes their strategy suffers a loss.
This definition can be seen as a strong argument for players to select their strategies to agree with the Nash equilibrium.
One knows that changing his own strategy away from the Nash equilibrium will result in a loss.

\medskip
To conclude, classically, the central problem in the study of $N$-player non-cooperative games has been explicitly phrased as follows:

\begin{question}
	Let $N\geq 1$ be an integer and let $\Ecal=(E_i,\theta_i : 1\leq i\leq N)$ be an $N$-player non-cooperative game.
	\begin{compactenum}[\rm \quad (A)]
		\item Is there a Nash equilibrium of the game $\Ecal$, or in other words, is $\NashEquilibria (\Ecal)\neq\emptyset$?
		\item If an equilibrium of the game $\Ecal$ exists, what can we say about the space $\NashEquilibria (\Ecal)\subseteq E$? 
			What is its topology, geometry, possible algebraic structure, and, in the case it is finite, what is its cardinality?
	\end{compactenum}
\end{question}

\medskip
Most of the classical work, as well as the state of the art, concentrates on studying unilateral changes and equilibria which are resistant to these changes.
A natural question to ask is:  Are there equilibria which are resistant to multilateral changes of players strategies?
To formalize our question we introduce the notion of $k$-lateral Nash equilibrium.

\begin{definition}\label{def:multi_lateral_nash_equilibrium}
	Let $N\geq 1$ and $1\leq k\leq N$ be integers and let $\Ecal=(E_i,\,\theta_i : 1\leq i\leq N)$ be an $N$-player non-cooperative game.
	A global strategy $x=(x_1,\dots,x_N)\in E$ is a {\em $k$-lateral Nash equilibrium} of the game $\Ecal$ if for every $k$ element subset $I=\{i_1,\dots,i_k\}\in \binom{[N]}{k}$ of $[N]$ and every index $i\in I$ it holds that
	\begin{multline*}
		\theta_i(x) \, = \, \sup\big\{ \theta_i(x') : \\
		x'\in \{x_1\}\times \dots\times \{x_{i_1-1}\}\times E_{i_1}\times \{x_{i_1+1}\}\times\dots\times  \{x_{i_k-1}\}\times E_{i_k}\times \{x_{i_k+1}\}\times \dots\times \{x_N\}\big\}.
	\end{multline*}
	Without loss of generality we assumed that $1\leq i_1<\dots<i_k\leq N$.
	Equivalently stated, $x$ is a $k$-lateral Nash equilibrium if for all $(y_{i_1},\dots,y_{i_k})\in E_{i_1}\times\dots\times E_{i_k}$ and all $i\in I$
	\begin{displaymath}
		\theta_i(x) \geq \theta_i(x_1,\dots,x_{i_1-1},y_{i_1},x_{i_1+1},\dots,x_{k_1-1},y_{k_1},x_{k_1+1},\dots,x_N) =: \theta_i(y_I, x_{-I}).
	\end{displaymath}
	The set of all $k$-lateral Nash equilibria of the game $\Ecal$ is denoted by $\NashEquilibria_k(\Ecal)$.
\end{definition}

\medskip
Here we expand on the notation introduced previously.
For $b\in A_1\times\dots\times A_n$ and $I:=\{i_1,\dots,i_k\}\subseteq [n]:=\{1,\dots,n\}$ we write $b_I := (b_{i_1},\dots,b_{i_k}) \in A_{i_1}\times\dots\times A_{i_k} =: A_I$.
This cleanly extends the common notation, since $b_{\{i\}} = (b_i) \in A_i$. Then for $b_I\in A_I$ we write
\begin{displaymath}
	(b_I,a_{-I}):= (a_1,\dots,a_{i_1-1},b_{i_1},a_{i_1+1},\dots,a_{i_k-1},b_{i_k},a_{i_k+1},\dots,a_N).
\end{displaymath}
Note that we implicitly assume that the order of entries in the $N$-tuple is maintained as given. This notation will come in handy throughout this manuscript.

\medskip
It is evident that every $k$-lateral Nash equilibrium is in particular a Nash equilibrium.
More generally, there is a descending filtration of the set of all Nash equilibria:
\begin{equation}\label{eq:filtration}
	\NashEquilibria(\Ecal)=\NashEquilibria_1(\Ecal) \ \supseteq \ \NashEquilibria_2(\Ecal) \ \supseteq \  \cdots \ \supseteq \ \NashEquilibria_N(\Ecal).
\end{equation}
It is clear that the existence of higher-lateral Nash equilibria is increasingly rare, so in particular the filtration above may be empty from some point on. On the other hand if the filtration is not trivial, the game has higher stability with respect to multiple players potentially cooperating towards alternate, preferable outcomes.

\section{Multilateral Equilibria Examples}
In order to illustrate the nature of multilateral equilibria we present a few classically motivated non-cooperative games.
The non-cooperative game in the first example will have a $(N-1)$-lateral equilibrium, but not an $N$-lateral equilibrium.

\begin{example} 
	\label{ex:GameWithNoKFoldNashEquilibrium}
	Consider the following game, played by $N$ players named $A,B_1,\dots,B_{N-1}$, with strategy spaces $E_{A} = E_{B_1} = \dots = E_{B_{N-1}} = [0,1]$.
	As usual set $E :=E_{A}\times E_{B_1}\times\dots\times E_{B_{N-1}}$, so in this case $E=[0,1]^N$ is an $N$-dimensional cube.
	Denote player $A$'s choice with $a\in E_A$ and player $B_i$'s choice by $b_i\in E_{B_i}$.
	Now,
	\begin{itemize}[\quad --]
		\item player $B_i$'s profit function is simply the projection function:
		      \begin{displaymath}
			      \theta_{B_i}\colon E\longrightarrow \mathbb{R}, \quad (a,b_1,\dots,b_{N-1}) \longmapsto b_i,
		      \end{displaymath}
		\item player $A$'s profit function is a truncated projection function defined by:
		      \begin{displaymath}
			      \theta_{A}\colon E\longrightarrow \mathbb{R}, \quad (a,b_1,\dots,b_{N-1}) \longmapsto \begin{cases}
				      a, \quad & \text{if } a + \max\{b_1,\dots,b_{N-1}\} < 1, \\
				      0, \quad & \text{else.}
			      \end{cases}
		      \end{displaymath}
	\end{itemize}
	In the case when $N=2$ this game is essentially a one-sided reformulation of the prisoner's dilemma.
	Intuitively, we could imagine this game representing a situation where $A$ is a suspect and the $B_i$ are presenting evidence against $A$.

	\medskip
	This game has a $(N-1)$-fold equilibrium at $(a,1,\dots,1) \in E$.
	We can easily check the definition, if only $N-1$ players are allowed to change their strategies, then either $A$ cannot change his strategy, and $B_i$ are not motivated to change, so they will remain at their choice of $b_1=\cdots=b_{N-1}=1$, or if $A$ is part of the change in strategy, then at least one of the $B_i$ doesn't get the opportunity to change away from $1$, and so $A$ cannot increase his profits.
	However, if all $N$ players are allowed to change their strategy, then there exist strategies where $A$ can improve his situation.

	\medskip
	This, to some degree, reflects the rather obvious reality that influencing more and more witnesses at the same time is increasingly difficult.
\end{example}

\medskip
The second example is a simple model of majority voting motivated by an example from the classical 1991 book of Drew Fudenberg and Jean Tirole~\cite{FudenbergTirole1991}, and it also has multilateral equilibria.

\begin{example}\label{ex:MajorityVoting}
	Assume an odd number $N=2n+1$ of players labelled $\{1,\dots,N\}$ are voting to elect a leader amongst them, so $E_i = \{1,\dots,N\}$ for $1\leq i\leq N$.
	The leader is determined as the candidate with the most votes, or, in case of a tie, the one with the smallest label amongst those who have the most votes.
	The tie-breaking rule is quite arbitrary and does not influence the conclusion we are about to present.
	It is natural to assume that each candidate would prefer to be elected as leader, and additionally he is indifferent amongst the other candidates. More explicitly, the profit functions are defined as follows
	\begin{displaymath}
		\theta_i(\textit{$j$\text{ \rm  elected as leader}}) = \begin{cases}
			1, \quad & i=j,         \\
			0, \quad & \text{else}.
		\end{cases}
	\end{displaymath}

	\medskip
	Interpreting the elected leader as the outcome of the game, we see that every possible outcome is a Nash equilibrium outcome, that is every candidate could be elected in a Nash equilibrium (strategy).
	However not all global strategies are Nash equilibria.
	For example, the global strategy $x$ given by $x_1=\dots=x_{n+1} = 2$ and $x_{n+2}=\dots=x_{2n+1} = 1$ is not a Nash equilibrium, because player $1$ could increase his profit by voting for himself instead and thereby gaining the majority.

	\medskip
	As for an example of a Nash equilibrium, the global strategy $x = (1,\dots,1)\in E$ is a Nash equilibrium. In fact, it is also an $n$-lateral Nash equilibrium, as no matter which $n$ players change their vote, they cannot change the majority of $N = 2n+1$ votes, so they cannot change the outcome of the game and therefore in particular cannot increase their profit functions.
	Of course, an $n+1$ or higher-lateral Nash equilibrium does not exist, because $n+1$ players would have the majority, but they couldn't agree on who to elect as a leader due to different preferences.

	\medskip
	However, there are also less-lateral Nash equilibria.
	E.g. the global strategy $x$ given by $x_1=\dots=x_{n+2} = 1$ and $x_{n+3}=\dots=x_{2n+1} = n+2$ is a $1$-lateral Nash equilibrium but not a $2$-lateral Nash equilibrium.
	No single change in strategy would change the elected leader in this case, however, if player $n+2$ and one other player changed their votes to $n+2$, then he would gain the majority.
	In general, the greater the agreement between the players, the higher-lateral the equilibrium is.

	\medskip
	This reflects the obvious phenomenon that officials who are elected by a large fraction of the voters are likely to remain elected for a long time.
	For a popular incumbent to be replaced, a large number of voters need to jointly change their vote --- exactly reflecting the higher-laterality of the Nash equilibrium.

	\medskip
	Thus, here the filtration of multilateral Nash equilibria has a direct manifestation in our model.
\end{example}

\medskip
Our last example is the so-called \textit{Cournot competition}, named after the French economist Augustin Cournot, which highlights how antagonism between players prevents the existence of a bilateral Nash equilibrium. Our presentation is based on the classical 1958 book \textit{Microeconomic Theory} by James M. Henderson and Richard E. Quandt~\cite{HendersonQuandt1958}.

\begin{example}\label{ex:CournotCompetition}
	Consider an oligopoly with $N$ firms producing the same homogeneous resource. Assume that every firm $i\in [N]$ can choose a production level $x_i\in [0,\infty)$, so $E_i:=[0,\infty)$. The market price is determined as some monotonously decreasing function of the total supply, say $p\colon [0,\infty)\longrightarrow[0,\infty)$ is a strictly monotonously decreasing function. Accordingly, the profit of each of the firms is given as
	\begin{displaymath}
		\theta_i(x) = \underbrace{x_i\cdot p(x_1+\dots+x_N)}_{\text{revenue}} - \underbrace{c_i(x_i)}_{\text{costs}},
	\end{displaymath}
	where $c_i\colon [0,\infty)\longrightarrow [0,\infty)$ is a firm-specific cost function representing the production cost associated with producing the homogeneous resource of the respective firm.

	\medskip
	In words, the Cournot competition can be summarized as follows: Each firm would like to sell as much of the resource as possible, but increased supply drives the sale price down.

	\medskip
	To simplify our analysis, we will restrict ourselves to the case when $N=2$ and to very simple price and cost functions given by
	\begin{displaymath}
		p(x) = 100 - \frac{1}{2}x, \qquad c_1(x_1) = 5x_1, \qquad c_2(x_2) = \frac{1}{2}x_2^2.
	\end{displaymath}
	Then the profits of the two players can be computed explicitly as
	\begin{displaymath}
		\theta_1(x_1,x_2)  = 95x_1 - \frac{1}{2}x_1^2 - \frac{1}{2}x_1x_2, \qquad	\theta_2(x_1,x_2) = 100x_2  - \frac{1}{2}x_1x_2 - x_2^2.
	\end{displaymath}
	We compute the best response of each player to the strategy of their opponent by maximizing the profit functions, given the opponent's strategy remains fixed. For this, we use the first-order necessary condition.
	\begin{displaymath}
		\frac{\partial\theta_1}{\partial x_1}(x_1,x_2)=0,\qquad \frac{\partial\theta_2}{\partial x_2}(x_1,x_2)=0,
	\end{displaymath}
	which is
	\begin{displaymath}
		95 - x_1 - \frac{1}{2}x_2=0,\qquad 100 - \frac{1}{2}x_1 - 2x_2=0.
	\end{displaymath}
	Denoting the first players best response to $x_2$ by $\Phi_1(x_2)$ and, accordingly, the second player's best response by $\Phi_2(x_1)$, we obtain that
	\begin{displaymath}
		\Phi_1(x_2) = 95 - \frac{1}{2}x_2, \qquad \Phi_2(x_1) = 50 - \frac{1}{4}x_1.
	\end{displaymath}
	Solving the linear system of equations and checking the second derivative condition yields that there exists only one Nash equilibrium $x^*=(x_1^*,x_2^*)=(80,30)$ and that
	\begin{displaymath}
		p(x_1^*,x_2^*)  = 45,   \qquad \theta_1(x_1^*,x_2^*)  = 3200, \qquad \theta_2(x_1^*,x_2^*) = 900.
	\end{displaymath}

	\medskip
	Now let us consider whether this Nash equilibrium is also a bilateral equilibrium. We compute the remaining partial derivatives:
	\begin{displaymath}
		\frac{\partial\theta_1}{\partial x_2}  = -\frac{1}{2}x_1, \qquad \frac{\partial\theta_2}{\partial x_1}  = -\frac{1}{2}x_2.
	\end{displaymath}
	At the equilibrium point $x^*=(80,30)$ the partial derivatives are both negative, so both players would prefer the other to reduce their production.
	Thus, the equilibrium point is not a bilateral equilibrium, because both players could get a better payoff after a bilateral change in strategies.
	For example, for player $1$, the global strategy $(y_1, y_2) = (95, 0)$, would yield a profit $\theta_1(y_1,y_2) = 4512.5$, which would be larger than the equilibrium profit of $3200$.
	Consequently, the Cournot competition does not have a bilateral Nash equilibrium.

	\medskip
	This highlights a common occurrence of non-existence of a bilateral equilibrium whenever there are two players whose interests are opposed to each other, meaning they can never reach a bilateral agreement.
\end{example}

\section{From multilateral Nash equilibria to multiple coincidences}\label{sec:MultipleCoincidences}

We base our initial steps towards developing the theory of multilateral Nash equilibria on the classical framework for studying non-cooperative games.

\subsection{Best-reply correspondence and existence of equilibria}

An essential object in the study of non-cooperative games is the so-called best-reply correspondence, a set-valued map describing the optimal responses of every player to any possible strategy from his opponents.
For background notions relating to the analysis of set-valued maps, see the classical book by Aubin and Frankowska~\cite{Aubin_Frankowska_2009}.
Throughout this section $N\geq 1$ is an integer and $\Ecal:=(E_i,\,\theta_i : 1\leq i\leq N)$ is a non-cooperative game.
We begin by recalling several definitions in the classical context.

\begin{definition}\label{def:MarginalFunction}
	The player $i$ {\em marginal function}, where $1\leq i\leq N$, is defined by:
	\begin{align*}
		\phi_i\colon E = E_1\times\dots\times E_N & \longrightarrow \mathbb{R}\cup\{\infty\} ,             \\
		x = (x_1,\dots,x_N)                       & \longmapsto \sup\{\theta_i(y_i,x_{-i}) : y_i\in E_i\}.
	\end{align*}
\end{definition}

\medskip
Typically, it will be assumed that the profit functions $\theta_i$ are bounded real functions, in which case the marginal functions $\phi_i\colon E\longrightarrow \mathbb{R}$ are also real functions.

\begin{definition}\label{def:BestReplyCorrespondence}
	The {\em  player $i$ best-reply correspondence}, where $1\leq i\leq N$, is given as follows:
	\begin{align*}
		\Phi_i\colon E =  E_1\times\dots\times E_N & \longrightarrow 2^{E_i} ,                                                 & \\
		x = (x_1,\dots,x_N)                        & \longmapsto \{y_i\in E_i : \theta_i(y_i, x_{-i}) = \phi(x_1,\dots,x_N)\}. &
	\end{align*}
\end{definition}

\medskip
We collect the players' best-reply correspondences $\Phi_i$ into a (global) best-reply correspondence for the whole game:
\begin{equation}\label{eq:def:BestReplyCorrespondence}
	\Phi\colon E\longrightarrow 2^{E}, \qquad x \longmapsto \Phi_1(x)\times \Phi_2(x)\times\dots\times\Phi_N(x).
\end{equation}

\medskip
One of the most important classical results shows that the Nash equilibria of a non-cooperative game are exactly the fixed points of its best-reply correspondence.

\begin{theorem}\label{thm:NashBestReplyCondition}
	A global strategy $x\in E$ is a Nash equilibrium if and only if $x\in\Phi(x)$, or in other words it is a fixed point of the best-reply correspondence $\Phi$.
\end{theorem}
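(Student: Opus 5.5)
The plan is to unwind the definitions on both sides and compare them coordinate by coordinate. The statement is a direct translation between Definition~\ref{def:NashEquilibrium} and the definition of the global best-reply correspondence $\Phi$ in \eqref{eq:def:BestReplyCorrespondence}, so no topological input is needed. By \eqref{eq:def:BestReplyCorrespondence}, $x\in\Phi(x)=\Phi_1(x)\times\dots\times\Phi_N(x)$ holds if and only if $x_i\in\Phi_i(x)$ for every $1\leq i\leq N$. It therefore suffices to show, for each fixed player $i$, that
\[
x_i\in\Phi_i(x)\iff \theta_i(x)\geq\theta_i(y_i,x_{-i})\ \text{for all } y_i\in E_i .
\]

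For the forward direction, suppose $x_i\in\Phi_i(x)$. By Definition~\ref{def:BestReplyCorrespondence}, $\theta_i(x_i,x_{-i})=\phi_i(x)$. Since $(x_i,x_{-i})=x$, this gives $\theta_i(x)=\sup\{\theta_i(y_i,x_{-i}):y_i\in E_i\}$ by Definition~\ref{def:MarginalFunction}. Hence $\theta_i(x)\geq\theta_i(y_i,x_{-i})$ for every $y_i\in E_i$, which is exactly the $i$-th condition of Definition~\ref{def:NashEquilibrium}.

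For the converse, suppose $\theta_i(x)\geq\theta_i(y_i,x_{-i})$ for all $y_i\in E_i$. Then $\theta_i(x)$ is an upper bound for the set $\{\theta_i(y_i,x_{-i}):y_i\in E_i\}$. It is also attained by that set, at $y_i=x_i$. So the supremum is a maximum equal to $\theta_i(x)$, i.e.\ $\phi_i(x)=\theta_i(x_i,x_{-i})$, and therefore $x_i\in\Phi_i(x)$.

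Combining the two directions over all $i$ gives the theorem. There is no genuine obstacle; the only care needed is notational. First, one should read the $\phi$ appearing in Definition~\ref{def:BestReplyCorrespondence} as the player-specific marginal function $\phi_i$. Second, the identification $(x_i,x_{-i})=x$ is what makes the supremum attained. Because attainment is witnessed by $x_i$ itself, no compactness or continuity assumptions are required, and the possibility $\phi_i=\infty$ cannot occur at such an $x$.
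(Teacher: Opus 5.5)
Your proof is correct and is essentially the paper's argument. The paper does not prove this theorem separately; it obtains it as the case $k=1$ of Theorem~\ref{thm:BestReplyConditionMultilateralNash} (where $G(x)=\{x\}$), whose proof is the same unwinding of the definitions of $\phi_I$ and $\Phi_I$. Your explicit observation that the supremum is attained at $y_i=x_i$ spells out the step the paper covers with ``all the arguments above can be reversed.''
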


\medskip
We will prove a generalization of this result for multilateral equilibria which includes this theorem as a particular case.

\medskip
For the remainder of this section let $k$ be an integer with $1\leq k\leq N$.
When defining the $k$-lateral marginal functions  some additional care needs to be taken.
As before $\Ecal=(E_i, \theta_i : 1\leq i\leq N)$ is a fixed non-cooperative game.

\begin{definition}\label{def:MultilateralMarginal}
	Let $I\in\binom{[N]}{k}$ be a subset of $k$ players. The {\em $k$-lateral marginal function} for the players $I$ is defined by
	\begin{align*}
		\phi_I\colon E = E_1\times\dots\times E_N & \longrightarrow (\mathbb{R}\cup\{\infty\})^{I}  ,                 \\
		x = (x_1,\dots,x_N)                       & \longmapsto (\sup\{\theta_i(y_I,x_{-I}) : y_I\in E_I\})_{i\in I},
	\end{align*}
	where $E_I:=\prod_{i\in I}E_i$.
	For the $i$-th coordinate of $\phi_I(x_1,\dots,x_N)$ we write $\phi_I(x_1,\dots,x_N)_i$ where $i\in I$.
\end{definition}

\medskip
A critical difference to the classical marginal function is that $\phi_I$ now takes values in $(\mathbb{R}\cup\{\infty\})^{I}$, instead of being a single real number or infinity.
This is because there is no preference towards any particular player.
Nonetheless, this does not complicate the definition of the best-reply correspondence.

\begin{definition}\label{def:MultilateralBestReplyCorrespondence}
	Let $I\in\binom{[N]}{k}$ be a subset of $k$ players.
	The {\em $k$-lateral best-reply correspondence} for the players $I$ is defined as
	\begin{align*}
		\Phi_I\colon E =  E_1\times\dots\times E_N & \longrightarrow 2^{E_I}  ,                                                                               & \\
		x = (x_1,\dots,x_N)                        & \longmapsto \big\{y_I\in E_I : (\forall i\in I) \ \theta_i(y_I, x_{-I}) = \phi_I(x_1,\dots,x_N)_i\big\}. &
	\end{align*}
\end{definition}

\medskip
With this definition we are now ready to state the generalization of Theorem~\ref{thm:NashBestReplyCondition} for multilateral Nash equilibria.

\begin{theorem}\label{thm:BestReplyConditionMultilateralNash}
	Consider the following set-valued maps:
	\begin{displaymath}
		F \colon E\xrightarrow{\Delta} \prod_{I\in\binom{[N]}{k}}E \xrightarrow{\prod \Phi_I} \prod_{I\in\binom{[N]}{k}} 2^{E_I}, \quad x\longmapsto (x)_{I\in\binom{[N]}{k}} \longmapsto \big(\Phi_I(x)\big)_{I\in\binom{[N]}{k}},
	\end{displaymath}
	\begin{displaymath}
		G \colon E\xrightarrow{\Delta} \prod_{I\in\binom{[N]}{k}}E \xrightarrow{\prod \pi_I} \prod_{I\in\binom{[N]}{k}} 2^{E_I}, \quad x\longmapsto (x)_{I\in\binom{[N]}{k}} \longmapsto \big(\{\pi_I(x)\}\big)_{I\in\binom{[N]}{k}}.
	\end{displaymath}
	Here $\Delta\colon E\longrightarrow \prod_{I\in\binom{[N]}{k}} E$ is the diagonal embedding and the maps $\pi_I\colon E\longrightarrow E_I$ are the canonical projections on the corresponding factors determined by the set $I$.

	\smallskip\noindent
	The global strategy $x\in E$ is a $k$-lateral Nash equilibrium of $\Ecal$ if and only $F(x)\cap G(x)\neq \emptyset$.
\end{theorem}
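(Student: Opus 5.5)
The argument is a direct unwinding of the definitions, so I would proceed coordinatewise. The first step is to fix the meaning of $F(x)\cap G(x)$. Both $F(x)$ and $G(x)$ are elements of $\prod_{I\in\binom{[N]}{k}} 2^{E_I}$, so the intersection is taken componentwise. I read the condition ``$F(x)\cap G(x)\neq\emptyset$'' as saying that for every $I$ the component $\Phi_I(x)\cap\{\pi_I(x)\}$ is nonempty. Equivalently, the product $\prod_I \big(\Phi_I(x)\cap\{\pi_I(x)\}\big)$ is nonempty, since a product of sets is nonempty iff every factor is. With this reading, the condition is simply that $\pi_I(x)=x_I\in\Phi_I(x)$ for all $I\in\binom{[N]}{k}$. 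I would state this interpretation explicitly at the start, since it is the only point where the statement is ambiguous.

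It then suffices to prove the following, for each fixed $I$: $x_I\in\Phi_I(x)$ if and only if, for every $i\in I$,
\begin{displaymath}
\theta_i(x)=\sup\{\theta_i(y_I,x_{-I}) : y_I\in E_I\}.
\end{displaymath}
The key observation is that $(x_I,x_{-I})=x$. By Definition~\ref{def:MultilateralBestReplyCorrespondence}, $x_I\in\Phi_I(x)$ means $\theta_i(x_I,x_{-I})=\phi_I(x)_i$ for all $i\in I$. By Definition~\ref{def:MultilateralMarginal}, this is exactly $\theta_i(x)=\sup_{y_I\in E_I}\theta_i(y_I,x_{-I})$. That in turn is the defining condition of Definition~\ref{def:multi_lateral_nash_equilibrium} for the subset $I$, since the set of $x'$ appearing in that definition is precisely $\{(y_I,x_{-I}) : y_I\in E_I\}$. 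Quantifying over all $I$ gives both directions of the theorem at once.

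There is no real obstacle beyond bookkeeping. The only points needing care are the componentwise interpretation of the intersection, and checking that the sup is attained at $x$ itself, so that it is finite and no issue with $\infty$ arises. As a sanity check, I would also remark that for $k=1$ the statement reduces to Theorem~\ref{thm:NashBestReplyCondition}. In that case $\Phi_{\{i\}}=\Phi_i$, and the condition $x_i\in\Phi_i(x)$ for all $i$ is equivalent to $x\in\Phi(x)$.
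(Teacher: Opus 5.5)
Your proposal is correct and is essentially the paper's proof: both unwind Definitions~\ref{def:multi_lateral_nash_equilibrium}, \ref{def:MultilateralMarginal} and \ref{def:MultilateralBestReplyCorrespondence}, using $(x_I,x_{-I})=x$ to show that the $k$-lateral condition for each $I$ is exactly $\pi_I(x)\in\Phi_I(x)$. The paper uses your componentwise reading of $F(x)\cap G(x)\neq\emptyset$ only implicitly, so stating it up front is a small improvement in precision.
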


\begin{proof}
	Assume that $x\in E$ is a $k$-lateral Nash equilibrium. 
	By definition, we have for all $I\in\binom{[N]}{k}$, all $i\in I$ and all $y_I\in E_I$
	\begin{displaymath}
		\theta_i(x) \, = \, \sup\Big\{ \theta_i(y_I, x_{-I}) : y_I\in E_I \Big\},
	\end{displaymath}
	or equivalently, by definition of the marginal function $\phi_I$
	\begin{displaymath}
		\theta_i(x) = \phi_I(x)_i.
	\end{displaymath}
	Since the player $i\in I$ was arbitrary, we have
	\begin{displaymath}
		(\theta_{i_j}(x))_{i_j\in I} = \phi_I(x) \in (\mathbb{R}\cup\{\infty\})^{I}.
	\end{displaymath}
	Now, using the definition of $\Phi_I$ we conclude that
	\begin{displaymath}
		x_I = \pi_I(x)\in \Phi_I(x),
	\end{displaymath}
	exactly as claimed. All the arguments above can be reversed, so we've proven the claim of the theorem.
\end{proof}

\medskip
Notice that in the case $k = 1$ the set-valued map $G$ from the previous theorem becomes $G(x)=\{x\}$, so it is induced by the identity map $\operatorname{Id}\colon E\longrightarrow E$, meaning the statement of the theorem reduces exactly to Theorem~\ref{thm:NashBestReplyCondition}.

\medskip
The previous theorem can be rephrased by saying that there is a one-to-one correspondence
\begin{displaymath}
	\NashEquilibria_k \ \longleftrightarrow  \ \operatorname{Graph}(F)\cap\operatorname{Graph}(G)
\end{displaymath}
between $k$-lateral Nash equilibria of $\Ecal$ and elements of the intersection of the graph of $F$ and the diagonal $G$.

There is yet another way to define the multilateral best-reply correspondence which this time yields a simultaneous fixed-point criterion for a strategy $x\in E$ to be a $k$-lateral Nash equilibrium. Consider the following modification to the multilateral best-reply correspondence
\begin{displaymath}
	\overline{\Phi}_I\colon E\longrightarrow 2^{E}, \quad x\longmapsto \{(y_I, z_{-I})\in E \colon z_{-I}\in E_{-I},\ \theta_I(y_I, x_{-I}) = \phi_I(x)\}.
\end{displaymath}
Now all modified best-reply correspondences $\overline{\Phi}_I$ domains as well as codomains coincide, so it is possible to derive a simultaneous fixed-point existence criterion.
\begin{theorem}\label{thm:SimultaneousFixedPointCriterion}
	A $k$-lateral Nash equilibrium of $\Ecal$ is a point $x\in E$ such that $x$ is simultaneously a fixed point of all $\overline{\Phi}_I$, for $I\in\binom{[N]}{k}$.
\end{theorem}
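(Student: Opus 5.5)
The claim is that $x\in E$ is a $k$-lateral Nash equilibrium exactly when $x\in\overline{\Phi}_I(x)$ for every $I\in\binom{[N]}{k}$. I would reduce it directly to Theorem~\ref{thm:BestReplyConditionMultilateralNash}. That theorem already says $x\in\NashEquilibria_k(\Ecal)$ if and only if $F(x)\cap G(x)\neq\emptyset$. Since $G(x)$ is the tuple of singletons $\{\pi_I(x)\}$, this means $\pi_I(x)=x_I\in\Phi_I(x)$ for every $I\in\binom{[N]}{k}$. So it suffices to prove, for each fixed $I$, the equivalence
\begin{displaymath}
	x\in\overline{\Phi}_I(x) \iff x_I\in\Phi_I(x).
\end{displaymath}

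First I would record the identity $\overline{\Phi}_I(x)=\Phi_I(x)\times E_{-I}$, up to the reordering of coordinates that the notation $(y_I,z_{-I})$ implicitly uses. This holds because the defining condition of $\overline{\Phi}_I$ constrains only $y_I$, by the same equations $\theta_i(y_I,x_{-I})=\phi_I(x)_i$ for all $i\in I$ that define $\Phi_I$; the component $z_{-I}$ is completely free. Here I read the shorthand $\theta_I(y_I,x_{-I})=\phi_I(x)$ in the definition of $\overline{\Phi}_I$ componentwise, as $(\theta_i(y_I,x_{-I}))_{i\in I}=\phi_I(x)$.

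Next, write $x=(x_I,x_{-I})$. Then $x\in\Phi_I(x)\times E_{-I}$ holds if and only if $x_I\in\Phi_I(x)$, since $x_{-I}\in E_{-I}$ is automatic. Quantifying over all $I\in\binom{[N]}{k}$ and invoking Theorem~\ref{thm:BestReplyConditionMultilateralNash} gives the statement.

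As a sanity check, for $k=1$ the sets $\overline{\Phi}_{\{i\}}(x)$ are $\Phi_i(x)\times E_{-i}$. A point is a common fixed point of all of them exactly when $x\in\Phi(x)$, which recovers Theorem~\ref{thm:NashBestReplyCondition}.

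There is no real obstacle here; the argument is essentially bookkeeping. The only care needed is interpreting the notation: the componentwise reading of the vector equation for $\theta_I$, and the coordinate ordering in $(y_I,z_{-I})$. One should also note that the supremum conventions with values in $\mathbb{R}\cup\{\infty\}$ cause no trouble, because everything goes through Theorem~\ref{thm:BestReplyConditionMultilateralNash}, which already handles them.
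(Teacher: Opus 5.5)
Your argument is correct and is essentially the paper's. The paper unwinds the definitions of a $k$-lateral equilibrium, of $\phi_I$ and of $\overline{\Phi}_I$ directly to reach $x=(x_I,x_{-I})\in\overline{\Phi}_I(x)$, and writes out only the forward implication, leaving the converse to the word ``equivalently''. You package the same unwinding as the identity $\overline{\Phi}_I(x)=\Phi_I(x)\times E_{-I}$ combined with Theorem~\ref{thm:BestReplyConditionMultilateralNash}, which has the minor benefit of making both directions of the equivalence explicit.
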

\begin{proof}
	Assume that $x\in E$ is a $k$-lateral Nash equilibrium. Equivalently, for all $I\in\binom{[N]}{k}$, all $i\in I$ and all $y_I\in E_I$ we have that $\theta_i(y_I,x_{-I}) \leq \theta_i(x_{I}, x_{-I})$. Once again, as in Definition~\ref{def:multi_lateral_nash_equilibrium}, this is equivalent to $\theta_i(x_{I},x_{-I}) = \sup\{\theta_i(y_I,x_{-I})\colon y_I \in E_I\}$. Since this holds for all $i\in I$, from the definition of $\phi_I$ we obtain $\theta_I(x_I,x_{-I}) = \phi_I(x)$. But then, by definition of $\overline{\Phi}_I$ we have
	\begin{displaymath}
		x = (x_I, x_{-I}) \in \{(y_I, z_{-I})\in E \colon z_{-I}\in E_{-I},\ \theta_I(y_I, x_{-I}) = \phi_I(x)\} = \overline{\Phi}_I(x),
	\end{displaymath}
	for all $I\in\binom{[N]}{k}$, completing the proof.
\end{proof}

\begin{corollary}\label{cor:SimultaneousCoincidenceWithDiagonal}
	Assume the notation introduced so far: A $k$-lateral Nash equilibrium of $\Ecal$ exists if and only if
	\begin{displaymath}
		\Delta(E\times E)\cap \bigcap_{I\in \binom{[N]}{k}} \operatorname{Graph}(\overline{\Phi}_I) \neq \emptyset.
	\end{displaymath}
	Here $\Delta(E\times E):=\{(x,x)\colon x\in E\}$ is the diagonal in $E\times E$.
\end{corollary}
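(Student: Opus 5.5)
This is a direct reformulation of Theorem~\ref{thm:SimultaneousFixedPointCriterion}, so the plan is to translate "simultaneous fixed point" into the language of graphs and check both inclusions.

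First, recall that for a set-valued map $\overline{\Phi}_I\colon E\longrightarrow 2^{E}$ the graph is
\begin{displaymath}
	\operatorname{Graph}(\overline{\Phi}_I)=\{(x,y)\in E\times E : y\in\overline{\Phi}_I(x)\}.
\end{displaymath}
A point $x$ is a fixed point of $\overline{\Phi}_I$, meaning $x\in\overline{\Phi}_I(x)$, exactly when $(x,x)\in\operatorname{Graph}(\overline{\Phi}_I)$. Equivalently, $(x,x)$ lies in $\Delta(E\times E)\cap\operatorname{Graph}(\overline{\Phi}_I)$.

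For the forward direction, suppose $x$ is a $k$-lateral Nash equilibrium. By Theorem~\ref{thm:SimultaneousFixedPointCriterion}, $x\in\overline{\Phi}_I(x)$ for every $I\in\binom{[N]}{k}$. Hence the single point $(x,x)$ lies in the diagonal and in every graph simultaneously, so the displayed intersection is nonempty.

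For the converse, take any element of the intersection. Because it lies in the diagonal, it has the form $(x,x)$. Membership in each $\operatorname{Graph}(\overline{\Phi}_I)$ then gives $x\in\overline{\Phi}_I(x)$ for all $I$. Theorem~\ref{thm:SimultaneousFixedPointCriterion} applies in both directions: its proof consists of reversible equivalences from the definition of $\phi_I$. So $x$ is a $k$-lateral Nash equilibrium.

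The only point needing care is that this converse uses the reverse implication of Theorem~\ref{thm:SimultaneousFixedPointCriterion}, which its proof argues only in one direction. I would check it directly. Since the $z_{-I}$-coordinates of $\overline{\Phi}_I(x)$ are unconstrained, the condition $x\in\overline{\Phi}_I(x)$ says precisely that $\theta_i(x_I,x_{-I})=\phi_I(x)_i$ for all $i\in I$. Holding for all $I$, this is exactly Definition~\ref{def:multi_lateral_nash_equilibrium}.
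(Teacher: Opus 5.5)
Your proof is correct and takes the same route as the paper. The paper gives no separate proof but presents the corollary as an immediate rephrasing of Theorem~\ref{thm:SimultaneousFixedPointCriterion}: simultaneous fixed points are identified with diagonal points $(x,x)$ lying in every $\operatorname{Graph}(\overline{\Phi}_I)$, and this is spelled out only in the family version, Corollary~\ref{cor:FamilyOfGamesSimultaneousCoincidenceWithDiagonal}. Your direct check of the converse of Theorem~\ref{thm:SimultaneousFixedPointCriterion} is a useful addition, since the paper's proof of that theorem writes out only one direction.
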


\medskip
Note that for $k=1$ the expression $\bigcap_{I\in \binom{[N]}{k}} \operatorname{Graph}(\overline{\Phi}_I)$ recovers exactly the graph of the classical best-reply correspondence $\operatorname{Graph}(\Phi)$. Hence, in this case the previous corollary reads: A Nash equilibrium exists if and only if $\Delta(E\times E)\cap \operatorname{Graph}(\Phi) \neq \emptyset$.

\subsection{Convexity and Upper Semicontinuity}
We analyze the best-reply correspondences $\Phi_I$, introduced in Definition  \ref{def:MultilateralBestReplyCorrespondence}, in the presence of assumptions of convexity.
We begin by recalling a classical lemma whose proof can be found in classical sources.
For completeness' sake we provide a proof.

\begin{lemma}\label{lem:QuasiConcaveMaximumSet}
	Let $K\subseteq X$ be a compact convex subset of a normed (real vector) space $X$, and let $f\colon K\longrightarrow\R$ be continuous and quasi-concave function.
	Then the set
	\begin{displaymath}
		M_f := \operatorname{argmax}(f) := \big\{x\in K\ : f(x)=\max\{f(x') : x'\in K\} \big\}
	\end{displaymath}
	is convex.
\end{lemma}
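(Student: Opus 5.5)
The argument is a direct verification: the maximum set of a quasi-concave function is an upper level set, and upper level sets of quasi-concave functions are convex. First I will note that $M_f$ is nonempty and the maximum $m:=\max\{f(x') : x'\in K\}$ is attained, by compactness of $K$ and continuity of $f$. Strictly speaking this is not needed for convexity, since the empty set is convex, but it makes $m$ well defined as a real number. Then I will rewrite
\begin{displaymath}
	M_f=\{x\in K : f(x)\geq m\},
\end{displaymath}
which holds because $f(x)\leq m$ for all $x\in K$ by the definition of $m$.

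Next I will take $x,y\in M_f$ and $t\in[0,1]$ and set $z:=tx+(1-t)y$. Convexity of $K$ gives $z\in K$. Quasi-concavity of $f$ means $f(tx+(1-t)y)\geq\min\{f(x),f(y)\}$. Hence
\begin{displaymath}
	f(z)\geq \min\{f(x),f(y)\}=m.
\end{displaymath}
Combined with $f(z)\leq m$, this gives $f(z)=m$, so $z\in M_f$, and $M_f$ is convex.

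There is no real obstacle here. The only point requiring care is the definition of quasi-concavity. If it is instead taken as ``all upper level sets $\{x\in K : f(x)\geq \alpha\}$ are convex'', the conclusion is immediate by choosing $\alpha=m$. I will remark that the two formulations are equivalent, so the proof covers either convention.
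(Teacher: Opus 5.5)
Your proof is correct and follows essentially the same route as the paper's: take two maximizers, note that their convex combination lies in $K$ by convexity, and use $f(z)\geq\min\{f(x),f(y)\}=m$ from quasi-concavity to conclude that $z$ is again a maximizer. Your additional remarks on attainment of the maximum and on the equivalent level-set definition of quasi-concavity are accurate but not needed for the argument.
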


\begin{proof}
	Choose any two points $y_1, y_2\in M_f$ and any $0\leq t\leq 1$.
	Set $y_t=(1-t)y_1+ty_2$ to be a convex combination of $y_1$ and $y_2$.
	The convexity of $K$ implies that $y_t\in K$.
	On the other hand, the quasi-concavity of $f$, by definition, yields that $f(y_t)\geq \min\{f(y_1),f(y_2)\}$.
	Since $M_f$ is defined as the set of maxima of $f$ and $y_1,y_2\in M_f$, we get that necessarily $y_t\in  M_f$.
	Hence, $M_f$ is convex.
\end{proof}

We can use this lemma to establish the following property of the best-reply correspondence.
This is a generalization of the same lemma for the classical best-reply correspondence which is frequently used as a key step in proofs of the existence of classical Nash equilibria in non-cooperative games.

\begin{lemma}\label{lem:MultilateralBestReplyConvex}
	Let $\Ecal=(E_i,\,\theta_i : 1\leq i\leq N)$  be a non-cooperative game such that all strategy spaces $E_i$ are compact subsets of normed spaces and additionally all profit functions $\theta_i$ are quasi-concave.
	Then for all $I\subseteq N$ the best-reply correspondence $\Phi_I$ is convex-valued.
\end{lemma}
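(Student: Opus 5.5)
The plan is to write $\Phi_I(x)$, for fixed $x\in E$ and $I\in\binom{[N]}{k}$, as a finite intersection of maximiser sets, apply Lemma~\ref{lem:QuasiConcaveMaximumSet} to each one, and use that an intersection of convex sets is convex. Throughout I read the hypotheses as saying that each $E_i$ is a compact \emph{convex} subset of a normed space and that each $\theta_i$ is continuous and quasi-concave on $E$. Convexity of $E_i$ is implicit: quasi-concavity on $E$ only makes sense on a convex domain.

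\emph{Step 1: the maximiser sets.} For each $i\in I$ define $f_i\colon E_I\longrightarrow\R$ by $f_i(y_I):=\theta_i(y_I,x_{-I})$. The set $E_I=\prod_{i\in I}E_i$ is a compact convex subset of the normed space $\prod_{i\in I}X_i$, where $X_i$ is the normed space containing $E_i$.
\begin{compactitem}[\quad --]
\item $f_i$ is continuous, because $\theta_i$ is.
\item $f_i$ is quasi-concave. The map $y_I\mapsto(y_I,x_{-I})$ is affine from $E_I$ onto the slice $E_I\times\{x_{-I}\}\subseteq E$. It therefore sends convex combinations to convex combinations, so the inequality $\theta_i\big((1-t)a+tb\big)\geq\min\{\theta_i(a),\theta_i(b)\}$ pulls back to $f_i$.
\item $f_i$ attains its maximum on the compact set $E_I$, and that maximum is $\sup\{\theta_i(y_I,x_{-I}) : y_I\in E_I\}=\phi_I(x)_i$.
\end{compactitem}
Consequently
\begin{displaymath}
	\Phi_I(x)=\bigcap_{i\in I}\operatorname{argmax}(f_i).
\end{displaymath}

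\emph{Step 2: convexity.} By Lemma~\ref{lem:QuasiConcaveMaximumSet}, each $\operatorname{argmax}(f_i)$ is convex. A finite intersection of convex sets is convex, so $\Phi_I(x)$ is convex. This includes the case where the intersection is empty, which is possible because different players in $I$ may want different joint deviations; the empty set is convex, so this case needs no separate treatment. Since $x$ and $I$ were arbitrary, $\Phi_I$ is convex-valued.

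\emph{Main obstacle.} The only delicate point is the precise meaning of the quasi-concavity hypothesis. For $k\geq2$ the argument needs $\theta_i$ to be quasi-concave \emph{jointly} in the $k$ coordinates $y_I$ along slices $E_I\times\{x_{-I}\}$. Quasi-concavity in the player's own variable alone, $\theta_i(\cdot,x_{-i})$, does not suffice. I would therefore state explicitly that the lemma uses quasi-concavity of $\theta_i$ on $E$ (or at least on every such slice). The rest of the proof is routine.
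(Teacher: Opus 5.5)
Your proof is correct and follows the paper's own argument: write $\Phi_I(x)=\bigcap_{i\in I}\operatorname{argmax}\theta_i(\cdot,x_{-I})$, note that each argmax is convex by Lemma~\ref{lem:QuasiConcaveMaximumSet}, and use that an intersection of convex sets is convex. The paper's proof also silently relies on your added readings (convexity of the $E_i$, continuity of the $\theta_i$, and quasi-concavity jointly along the slices $E_I\times\{x_{-I}\}$). The last of these matters beyond this lemma, since Corollaries~\ref{cor:MainResultMultilateralCorollay} and~\ref{cor:Main_result_01_02} invoke the lemma while assuming only quasi-concavity of $\theta_i^{(b)}(\cdot,x_{-i})$ in each player's own variable.
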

\begin{proof}
	It follows from the definitions of $\Phi_I$ and $\phi_I$ that

	\begin{align*}
		\Phi_I(x_1,\dots,x_N) = & \big\{y_I\in E_I : (\forall i\in I) \ \theta_i(y_I, x_{-I}) = \phi_I(x_1,\dots,x_N)_i\big\}                \\
		=                       & \bigcap_{i\in I}\big\{y_I\in E_I : \theta_i(y_I, x_{-I}) = \phi_I(x_1,\dots,x_N)_i\big\}                   \\
		=                       & \bigcap_{i\in I}\big\{y_I\in E_I : \theta_i(y_I, x_{-I}) = \sup\{\theta_i(y_I,x_{-I}) : y_I\in E_I\}\big\} \\
		=                       & \bigcap_{i\in I}  \operatorname{argmax} \theta_i(\cdot,x_{-I}) .
	\end{align*}
	In the last step we used the notation $\theta_i(\cdot,x_{-I})$ for the function $E_I\longrightarrow \mathbb{R}$ given by $y_I\longmapsto \theta_i(y_I,x_{-I})$, which is quasi-concave due to $\theta_i$ being quasi-concave by assumption.
	Now according to Lemma~\ref{lem:QuasiConcaveMaximumSet} each of the sets $\operatorname{argmax}\theta_i(\cdot,x_{-I})$ is convex and so their intersection $\Phi_I(x_1,\dots,x_N)$ has to be convex.
\end{proof}

\medskip
Next we discuss semicontinuity of the best-reply correspondence, once again mimicking classical results on non-cooperative games.
In the following we will make use of Claude Berge's famous Maximum theorem from 1959, which we state here.
For a proof as well as background notions on set-valued maps, see for example Frankowska and Aubin's classical book on set-valued analysis~\cite[Thm.\,17.31]{Aubin_Frankowska_2009}.
Also compare Berge's 1997 paper~\cite[Ch.VI.3,\,Thm.\,1]{Berge1997}.

\begin{theorem}
	\label{thm:MaximumTheorem}
	Let $X$ and $Y$ be metric spaces, the real function  $f\colon X\times Y\longrightarrow \mathbb{R}$ continuous, and let $F\colon X\longrightarrow 2^Y$ be a set-valued map that is strict, compact-valued, and both lower and upper semicontinuous.
	Then
	\begin{compactenum}[\rm \quad (1)]
		\item the marginal function $\varphi\colon X\longrightarrow \mathbb{R}$ defined by $\varphi(x):=\max\{f(x,y)\colon y\in F(x)\}$ is continuous, and
		\item the marginal set-valued map $\Phi\colon X\longrightarrow 2^Y$ given by $\Phi(x):= \{y\in F(x)\colon f(x,y)=\varphi(x)\}$ is upper semicontinuous and strict.
	\end{compactenum}
\end{theorem}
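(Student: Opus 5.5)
The statement is Berge's Maximum Theorem (Theorem~\ref{thm:MaximumTheorem}), so the plan is the standard sequential argument in metric spaces. I read ``strict'' as ``nonempty-valued''. With that reading, compactness of $F(x)$ and continuity of $f(x,\cdot)$ make $\varphi(x)$ a genuine maximum, so $\varphi$ is real-valued and $\Phi(x)\neq\emptyset$. That settles strictness of $\Phi$.

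\textbf{Step 1: continuity of $\varphi$ via two semicontinuity inequalities.} Fix $x_n\to x$.

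For lower semicontinuity, pick $y\in\Phi(x)$. Lower semicontinuity of $F$ gives $y_n\in F(x_n)$ with $y_n\to y$. By continuity of $f$,
\[\liminf\varphi(x_n)\ \ge\ \lim f(x_n,y_n)\ =\ f(x,y)\ =\ \varphi(x).\]

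For upper semicontinuity, pass to a subsequence realizing $\limsup\varphi(x_n)$ and choose $y_n\in\Phi(x_n)\subseteq F(x_n)$. This is the step I expect to be the main technical point: extracting a convergent subsequence of $(y_n)$. I would use the standard fact that an upper semicontinuous, compact-valued map has the following property: if $x_n\to x$ and $y_n\in F(x_n)$, then $(y_n)$ has a subsequence converging to a point of $F(x)$.

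To prove this fact, note first that for each $\varepsilon>0$, eventually $F(x_n)\subseteq B_\varepsilon(F(x))$. Hence there are $z_n\in F(x)$ with $d(y_n,z_n)\to0$. By compactness of $F(x)$, the $z_n$ subconverge to some $y\in F(x)$, and then so do the $y_n$. With this in hand,
\[\limsup\varphi(x_n)\ =\ \lim f(x_n,y_n)\ =\ f(x,y)\ \le\ \varphi(x).\]
Combining the two inequalities gives continuity of $\varphi$, which is claim (1).

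\textbf{Step 2: upper semicontinuity of $\Phi$ by contradiction.} Suppose $\Phi$ is not upper semicontinuous at $x$. Then there is an open $U\supseteq\Phi(x)$, together with $x_n\to x$ and $y_n\in\Phi(x_n)\setminus U$.

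By the sequential fact from Step 1, a subsequence $y_n\to y\in F(x)$, and $y\notin U$ because $U^c$ is closed. Continuity of $f$ and of $\varphi$ (Step 1) give
\[f(x,y)\ =\ \lim f(x_n,y_n)\ =\ \lim\varphi(x_n)\ =\ \varphi(x),\]
so $y\in\Phi(x)\subseteq U$. This is a contradiction.

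As a byproduct, $\Phi(x)$ is a closed subset of the compact set $F(x)$, so $\Phi$ is compact-valued. This proves claim (2).
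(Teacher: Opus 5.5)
Your proof is correct. You read ``strict'' as nonempty-valued, which is the Aubin--Frankowska convention the statement uses. Lower semicontinuity of $F$ gives $\liminf\varphi(x_n)\ge\varphi(x)$. Your lemma, that an upper semicontinuous compact-valued $F$ takes $x_n\to x$, $y_n\in F(x_n)$ to a subsequence converging into $F(x)$, is proved correctly and yields both $\limsup\varphi(x_n)\le\varphi(x)$ and the upper semicontinuity of $\Phi$.

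The paper gives no proof of this theorem; it only cites Berge and the set-valued analysis textbook. Your argument is the standard proof found there. In the general topological version, sequences are replaced by nets, and upper semicontinuity of $\Phi$ comes from $\Phi$ having closed graph inside the compact-valued upper semicontinuous $F$, which is the net analogue of your Step~2.
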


Another classical result of set-valued analysis is the so-called Closed graph lemma.
For a proof, see the classical book of Aubin and Frankowska~\cite[Thm.\,17.11]{Aubin_Frankowska_2009}.

\begin{lemma}\label{lem:ClosedGraphLemma}
	Let $F\colon X\longrightarrow 2^Y$ be a set-valued map.
	If $F$ is closed-valued, upper semicontinuous and has a closed domain in $X$, then the graph $\operatorname{Graph}(F)$ is closed in $X\times Y$.
	Additionally, if $Y$ is compact, then this statement is an equivalence.
\end{lemma}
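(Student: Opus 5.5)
We prove the Closed graph lemma directly from the sequential characterizations, since $X$ and $Y$ are metric spaces. Upper semicontinuity of $F$ at $x\in\operatorname{Dom}(F)$ means: for every open neighbourhood $U$ of $F(x)$ there is a neighbourhood $V$ of $x$ with $F(x')\subseteq U$ for all $x'\in V$.

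For the forward implication, take a sequence $(x_n,y_n)\in\operatorname{Graph}(F)$ converging to $(x,y)\in X\times Y$. The aim is to show $(x,y)\in\operatorname{Graph}(F)$.

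\begin{compactenum}[\rm \quad (1)]
	\item Since $x_n\in\operatorname{Dom}(F)$ and the domain is closed, the limit satisfies $x\in\operatorname{Dom}(F)$. So $F(x)$ is defined and nonempty, and closed by assumption.
	\item Suppose, for contradiction, that $y\notin F(x)$. Because $F(x)$ is closed, $\varepsilon:=\tfrac12 d(y,F(x))>0$.
	\item Consider the open set $U:=\{z\in Y : d(z,F(x))<\varepsilon\}$, which contains $F(x)$. By upper semicontinuity, $F(x_n)\subseteq U$ for all large $n$. Hence $d(y_n,F(x))<\varepsilon$ for these $n$.
	\item Passing to the limit and using continuity of $z\mapsto d(z,F(x))$ gives $d(y,F(x))\le\varepsilon<2\varepsilon$, a contradiction.
\end{compactenum}

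Therefore $y\in F(x)$ and the graph is closed.

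For the converse, assume $Y$ is compact and $\operatorname{Graph}(F)$ is closed. Closed-valuedness follows because $F(x)$ is the slice of a closed set at $x$. Closedness of the domain follows because $\operatorname{Dom}(F)$ is the projection of a closed subset of $X\times Y$ along the compact factor $Y$. Projections along compact factors are closed maps (the tube lemma), or directly: if $x_n\to x$ with $y_n\in F(x_n)$, extract a convergent subsequence $y_n\to y$, and closedness of the graph gives $y\in F(x)$.

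For upper semicontinuity, argue by contradiction. Suppose there are an open $U\supseteq F(x)$ and points $x_n\to x$ with $y_n\in F(x_n)\setminus U$. By compactness of $Y$, a subsequence $y_n\to y$, and $y\in Y\setminus U$ since $U$ is open. Closedness of the graph gives $y\in F(x)\subseteq U$, a contradiction.

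The main subtlety is in step (3) of the forward direction. Upper semicontinuity must be applied with the metric $\varepsilon$-neighbourhood of $F(x)$. This set is open even when $F(x)$ is not compact, so the argument does not require compact values. The remaining steps are routine sequence arguments.
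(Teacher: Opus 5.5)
Your proof is correct, and it is the standard argument: the forward direction separates $y$ from the closed set $F(x)$ using the open $\varepsilon$-neighbourhood of $F(x)$, with the closed domain guaranteeing $F(x)\neq\emptyset$, and the converse extracts a convergent subsequence using compactness of $Y$. The paper gives no proof of its own and only cites the textbook closed-graph theorem, which is proved this way (with nets in place of sequences, and with $Y$ required to be regular in the forward direction); your restriction to metric spaces matches the paper's standing setting.
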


Using Berge's Maximum theorem (Theorem~\ref{thm:MaximumTheorem}) and the Closed graph lemma (Lemma \ref{lem:ClosedGraphLemma}) we can prove the following lemma relating to the multilateral best-reply correspondences.

\begin{lemma}\label{lem:MultilateralBestReplyUpperSemicontinuous}
	Let $\Ecal=(E_i,\,\theta_i : 1\leq i\leq N)$  be a non-cooperative game such that all strategy spaces $E_i$ are compact metric spaces.
	Then for all $k\in [N]$ and for all $I\in\binom{[N]}{k}$, the best-reply correspondence $\Phi_I$ is an upper-semicontinuous set-valued map.
\end{lemma}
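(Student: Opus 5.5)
Fix $k\in[N]$ and $I\in\binom{[N]}{k}$. By the computation in the proof of Lemma~\ref{lem:MultilateralBestReplyConvex}, which uses no convexity, we have
\[
\Phi_I(x)=\bigcap_{i\in I}\Psi_i(x),\qquad \Psi_i(x):=\operatorname{argmax}\,\theta_i(\cdot,x_{-I})\subseteq E_I .
\]
The plan has three steps. First, show that each $\Psi_i$ is upper semicontinuous with closed graph, using Berge's Maximum theorem. Second, conclude that $\operatorname{Graph}(\Phi_I)=\bigcap_{i\in I}\operatorname{Graph}(\Psi_i)$ is closed. Third, since $E_I$ is compact, read upper semicontinuity back off the closed graph via Lemma~\ref{lem:ClosedGraphLemma}.

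For the first step, apply Theorem~\ref{thm:MaximumTheorem} with $X=E$, $Y=E_I$ and the constant set-valued map $F(x)=E_I$. This map is strict because $E_I\neq\emptyset$ (for the empty game the statement is vacuous), it is compact-valued since $E_I$ is a finite product of compact metric spaces, and it is trivially both lower and upper semicontinuous. Take $f(x,y_I):=\theta_i(y_I,x_{-I})$, which is continuous as a composition of $\theta_i$ with the continuous map $(x,y_I)\mapsto(y_I,x_{-I})$. The theorem then gives that the marginal function $x\mapsto\phi_I(x)_i$ is continuous and real-valued, the maximum being attained by compactness, and that $\Psi_i$ is upper semicontinuous. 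Each $\Psi_i(x)$ is closed, being a level set of a continuous function on a compact space, and the domain is all of $E$. Lemma~\ref{lem:ClosedGraphLemma} therefore shows that $\operatorname{Graph}(\Psi_i)$ is closed in $E\times E_I$.

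As an alternative to Berge, closedness of $\operatorname{Graph}(\Psi_i)$ can be checked directly. Write it as the zero set of the continuous function $(x,y_I)\mapsto \phi_I(x)_i-\theta_i(y_I,x_{-I})$, so it is enough to know that $\phi_I(\cdot)_i$ is continuous. That continuity comes from part (1) of the Maximum theorem, or directly from the uniform continuity of $\theta_i$ on the compact set $E$.

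For the second and third steps, the graph of $\Phi_I$ is the finite intersection $\bigcap_{i\in I}\operatorname{Graph}(\Psi_i)$, hence closed in $E\times E_I$. Since $E_I$ is compact, the equivalence part of Lemma~\ref{lem:ClosedGraphLemma} yields that $\Phi_I$ is upper semicontinuous.

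The step that needs the most care is the passage from the $\Psi_i$ to their intersection. An intersection of upper semicontinuous maps need not be upper semicontinuous in general, and $\Phi_I(x)$ may even be empty, because the players in $I$ may have no common maximizer. That is why I route the argument through closed graphs and the compactness of $E_I$ rather than intersecting the maps directly. Possible emptiness is harmless here: Lemma~\ref{lem:ClosedGraphLemma} as stated requires no strictness for the implication from closed graph to upper semicontinuity when $Y$ is compact. If the definition of upper semicontinuity in use requires a closed domain, that also holds, since the domain is the projection of a closed subset of $E\times E_I$ along the compact factor $E_I$ and is therefore closed.
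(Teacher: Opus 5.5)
Your proposal is correct and follows the paper's own proof step for step. You apply Berge's Maximum theorem with the constant correspondence $x\mapsto E_I$ to get upper semicontinuity of each per-player argmax map, hence closed graphs via Lemma~\ref{lem:ClosedGraphLemma}; you write $\operatorname{Graph}(\Phi_I)$ as the intersection of these graphs; and you use compactness of $E_I$ to recover upper semicontinuity from the converse of that lemma. Your extra remarks on nonemptiness of $E_I$, on possibly empty values of $\Phi_I$, and on closedness of its domain are correct refinements that the paper leaves implicit.
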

\begin{proof}
	Recall the definition of $\Phi_I$:
	\begin{displaymath}
		\Phi_I(x) =\big\{y_I\in E_I : (\forall i\in I) \ \theta_i(y_I, x_{-I}) = \phi_I(x_1,\dots,x_N)_i\big\}.
	\end{displaymath}
	Like in the previous proof, we have equivalently
	\begin{displaymath}
		\Phi_I(x) = \bigcap_{i\in I}\big\{y_I\in E_I : \theta_i(y_I, x_{-I}) = \phi_I(x_1,\dots,x_N)_i\big\}.
	\end{displaymath}
	For ease of notation, for $i\in I$ abbreviate as follows
	\begin{displaymath}
		\Phi_{I,i}(x):= \big\{y_I\in E_I : \theta_i(y_I, x_{-I}) = \phi_I(x_1,\dots,x_N)_i\big\}.
	\end{displaymath}
	Hence, $\Phi_I(x) = \bigcap_{i\in I} \Phi_{I,i}(x)$ and, in particular, $\operatorname{Graph}(\Phi_I) = \bigcap_{i\in I}\operatorname{Graph}(\Phi_{I,i})$.
	Now consider the continuous function
	\begin{displaymath}
		\theta_i(\cdot, x_{-I}) \colon E_I \longrightarrow \mathbb{R}, \qquad y_I\longmapsto \theta_i(y_I, x_{-I}).
	\end{displaymath}
	We use Berge's Maximum theorem as follows.
	For that, in the notation of  Theorem~\ref{thm:MaximumTheorem}, take that $X = E$, $Y = E_I$ and $f = \theta_i(\cdot, x_{-I})$.
	The map $F \colon E\longrightarrow 2^{E_{I}}$ is the constant set-valued map $x \longmapsto E_{I}$ with graph $\operatorname{Graph}(F) = E \times E_I$ which is evidently continuous and compact-valued.
	Substituting these choices in the definition of the marginal function and marginal correspondence in Theorem~\ref{thm:MaximumTheorem}, we have that the marginal function of $\theta_i(\cdot, x_{-I})$ is exactly $\phi_I(x_1,\dots,x_N)_i$, and the marginal correspondence is indeed $\Phi_{I,i}$.
	Thus, $\Phi_{I,i}$ is upper-semicontinuous.

	\smallskip
	From Lemma~\ref{lem:ClosedGraphLemma} follows that $\operatorname{Graph}(\Phi_{I,i})\subseteq E \times E_I$ is closed.
	Since the intersection of closed sets is closed, $\operatorname{Graph}(\Phi_I) = \bigcap_{i\in I}\operatorname{Graph}(\Phi_{I,i})$ is also closed.
	Once again employing Lemma~\ref{lem:ClosedGraphLemma} we obtain that $\Phi_I$ is upper-semicontinuous, exactly what we have set out to prove.
\end{proof}

\subsection{Nikaido--Isoda function and existence of equilibria}

A useful tool for the study of non-cooperative games and in particular abstract economies is the so-called Nikaido--Isoda Function, first introduced by Hukukane, Nikaido and Kazuo Isoda in 1955 in their publication~\cite{NikaidoIsoda1955}.
Instead of considering a family of marginal functions and deriving a family of best-reply correspondences, the Nikaido--Isoda function lets us determine the equilibria of a non-cooperative game, or even abstract economy, by understanding the marginal function of the Nikaido--Isoda function.

\medskip
We begin by recalling the classical theory, and then present a modification which allows us to generalize it to multilateral Nash equilibria.
Throughout this section we also maintain the notation we've used so far of $\Ecal:=(E_i,\theta_i: 1\leq i\leq N)$ for a non-cooperative game.

\begin{definition}\label{def:NikaidoIsodaClassical}
	The {\em Nikaido--Isoda} function of a non-cooperative game $\Ecal$ is given by
	\begin{align*}
		\Psi_c\colon E\times E & \longrightarrow \mathbb{R} ,                                                          \\
		(x,y)                  & \longmapsto \sum_{i\in[N]} \left(\theta_i(y_i,x_{-i}) - \theta_i(x_i, x_{-i})\right).
	\end{align*}
\end{definition}

We introduced the subscript $\Psi_c$ to refer to the classical definition of the Nikaido--Isoda function, because we will soon need to modify it and we will denote that modification by $\Psi$.
Before that, let us mention the main result concerning the classical Nikaido--Isoda function.

\begin{theorem}\label{thm:NikaidoIsodaClassicalTheorem}
	Let $\Psi_c$ be the Nikaido--Isoda function of a non-cooperative game $\Ecal$ and define the marginal function $V_c$ associated to $\Psi_c$ as usual
	\begin{align*}
		V_c\colon E & \longrightarrow \mathbb{R} \cup \{\infty\},       \\
		x           & \longmapsto \sup\big\{\Psi_c(x,y) : y\in E\big\}.
	\end{align*}
	Then the global strategy $x\in E$ is a Nash equilibrium of $\Ecal$ if and only if $V_c(x) = 0$.
\end{theorem}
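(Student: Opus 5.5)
The plan is to prove the two implications separately. Both rest on one observation: taking $y=x$ in the definition of $V_c$ gives $\Psi_c(x,x)=0$, so $V_c(x)\geq 0$ for every $x\in E$. The statement therefore reduces to showing that $x$ is a Nash equilibrium exactly when $\Psi_c(x,y)\leq 0$ for all $y\in E$.

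\textbf{Nash equilibrium implies $V_c(x)=0$.} Assume $x\in\NashEquilibria(\Ecal)$. By Definition~\ref{def:NashEquilibrium}, every summand $\theta_i(y_i,x_{-i})-\theta_i(x_i,x_{-i})$ is $\leq 0$ for every $y\in E$. Hence $\Psi_c(x,y)\leq 0$ for all $y$, so $V_c(x)\leq 0$. Combined with $V_c(x)\geq 0$ this gives $V_c(x)=0$.

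\textbf{$V_c(x)=0$ implies Nash equilibrium.} Suppose $V_c(x)=0$ but $x$ is not a Nash equilibrium. Then there exist a player $i$ and a strategy $y_i\in E_i$ with $\theta_i(y_i,x_{-i})>\theta_i(x)$. The key step, and the only one needing a choice, is to pick the test strategy $y:=(y_i,x_{-i})\in E$, which deviates in coordinate $i$ only. In $\Psi_c(x,y)$, every term with $j\neq i$ is $\theta_j(x_j,x_{-j})-\theta_j(x)=0$, because $y_j=x_j$. The only remaining term is the strictly positive $\theta_i(y_i,x_{-i})-\theta_i(x)$. Thus $V_c(x)\geq\Psi_c(x,y)>0$, which is a contradiction.

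The main subtlety, though a mild one, is the second direction. Positive gains for some players could in principle be cancelled by losses for others in the sum, so one must test against a unilateral deviation so that all other summands vanish exactly. No compactness or continuity is needed. If $V_c(x)=\infty$, the statement is trivially consistent, since then $x$ is not an equilibrium by the first direction.
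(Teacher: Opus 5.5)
Your proof is correct and complete. That includes the lower bound $V_c(x)\ge\Psi_c(x,x)=0$ and the remark on the case $V_c(x)=\infty$.

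The paper does not prove this statement itself; it defers to Nikaido--Isoda and Facchinei--Kanzow, and the argument given there is essentially yours. The paper's proof of the multilateral analogue, Theorem~\ref{thm:MultilateralNikaidoIsodaTheorem}, has the same structure. It differs only in the converse. There the $\max$ in $\Psi_k$ lets one read off $\theta_i(y_I,x_{-I})\le\theta_i(x)$ directly from $\Psi_k(x,y)\le 0$ for every $y$. For the sum, your unilateral test point $y=(y_i,x_{-i})$ is exactly what is needed to make all other summands vanish.

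This also explains why the paper switches to the $\max$ for $k\ge 2$. A coalition deviation $(y_I,x_{-I})$ neither kills the summands of overlapping coalitions $J\neq I$ nor separates the players $i\in I$ from one another. So your isolation step has no direct multilateral counterpart.
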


\medskip
For a proof of this classical result, see e.g. the original publication~\cite{NikaidoIsoda1955} and for the case of abstract economies, for example \cite[Thm.\,1]{FacchineiKanzow2007}.

\medskip
We aim to introduce an analogue of Nikaido--Isoda function for the context of multilateral Nash equilibria in such a way that an appropriate analogue of Theorem \ref{thm:NikaidoIsodaClassicalTheorem} holds.
For that we begin by introducing a modification of the classical Nikaido--Isoda function.

\begin{definition}
	The {\em  modified Nikaido--Isoda function} of a non-cooperative game $\Ecal$ is given by:
	\begin{align*}
		\Psi\colon E\times E & \longrightarrow \mathbb{R} ,                                                          \\
		(x,y)                & \longmapsto \max_{i\in[N]} \left(\theta_i(y_i,x_{-i}) - \theta_i(x_i, x_{-i})\right).
	\end{align*}
\end{definition}

\medskip
An analogous result to Theorem~\ref{thm:NikaidoIsodaClassicalTheorem} holds for the modified Nikaido--Isoda function.

\begin{theorem}\label{thm:ModifiedNikaidoIsoda}
	Let $\Psi$ be the modified Nikaido--Isoda function of the non-cooperative game $\Ecal$.
	Define the marginal function associated to $\Psi$ in the usual way:
	\begin{align*}
		V\colon E & \longrightarrow \mathbb{R} \cup \{\infty\}, \\
		x         & \longmapsto \sup\{\Psi(x,y) : y\in E\}.
	\end{align*}
	Then the global strategy $x\in E$ is a Nash equilibrium of $\Ecal$ if and only if $V(x) = 0$.
\end{theorem}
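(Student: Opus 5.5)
The plan is to prove the two inequalities $V(x)\geq 0$ (which holds for every $x$) and $V(x)\leq 0$ (which is equivalent to the equilibrium condition) directly from the definitions.

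\emph{First inequality.} Choose $y=x$. Then $\Psi(x,x)=\max_{i\in[N]}\bigl(\theta_i(x_i,x_{-i})-\theta_i(x_i,x_{-i})\bigr)=0$. Hence $V(x)=\sup_{y\in E}\Psi(x,y)\geq \Psi(x,x)=0$ for every global strategy $x\in E$. Consequently, $V(x)=0$ holds if and only if $\Psi(x,y)\leq 0$ for all $y\in E$.

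\emph{Second step: unpack the maximum.} The condition $\Psi(x,y)\leq 0$ for all $y\in E$ is equivalent to
\begin{displaymath}
	\theta_i(y_i,x_{-i})\leq\theta_i(x)\quad\text{for all } i\in[N] \text{ and all } y\in E,
\end{displaymath}
because a maximum of finitely many reals is nonpositive exactly when each term is nonpositive.

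\emph{Third step: reduce to unilateral deviations.} The quantity $\theta_i(y_i,x_{-i})$ depends on $y$ only through its $i$-th coordinate $y_i$. Moreover, every $y_i\in E_i$ extends to some $y\in E$; for instance take $y=(y_i,x_{-i})$. So the condition above says exactly that $\theta_i(x)\geq\theta_i(y_i,x_{-i})$ for all $i$ and all $y_i\in E_i$. This is precisely the second formulation of a Nash equilibrium in Definition~\ref{def:NashEquilibrium}. Both directions follow, since every step is an equivalence.

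The only point needing care is the value $V(x)=\infty$ for unbounded profit functions. This causes no problem, since $V(x)=\infty$ simply means that some $\Psi(x,y)>0$, so $x$ is not an equilibrium. The argument does not use the classical Theorem~\ref{thm:NikaidoIsodaClassicalTheorem}; the direct route is simpler because the maximum, unlike the sum, cannot hide a positive gain of one player behind losses of others.
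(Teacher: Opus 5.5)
Your proof is correct and is essentially the paper's argument. The paper gets this statement as the case $k=1$ of Theorem~\ref{thm:MultilateralNikaidoIsodaTheorem}, where $\Psi_1=\Psi$ and $1$-lateral equilibria are exactly Nash equilibria; its proof uses your two steps, $V_k(x)\geq\Psi_k(x,x)=0$ and $V_k(x)\leq 0$ if and only if every term in the maximum is nonpositive, and your remark that each $y_i\in E_i$ is realized by $y=(y_i,x_{-i})\in E$ makes explicit a step the paper leaves implicit.
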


\medskip
We do not yet prove the previous claim because it turns out to be a special case of a result which follows --- our generalization to multilateral Nash equilibria.
For that let us introduce the multilateral Nikaido--Isoda function.

\begin{definition}\label{def:MultilateralNikaidoIsoda}
	Let $N\geq 1$ and $1\leq k\leq N$ be integers and let $\Ecal=(E_i,\,\theta_i : 1\leq i\leq N)$ be an $N$-player non-cooperative game.
	The {\em $k$-lateral Nikaido--Isoda function} of $\Ecal$ is defined by
	\begin{align*}
		\Psi_k\colon E\times E & \longrightarrow \mathbb{R},                                                                                   \\
		(x, y)                 & \longmapsto \max_{I\in\binom{[N]}{k}}\max_{i\in I}\left(\theta_i(y_I, x_{-I}) - \theta_i(x_I, x_{-I})\right).
	\end{align*}
\end{definition}

\medskip
With the notion of $k$-lateral Nikaido--Isoda function introduced, we can formulate our generalization of the previous theorem.

\begin{theorem}\label{thm:MultilateralNikaidoIsodaTheorem}
	Let $\Psi_k$ be the $k$-lateral Nikaido--Isoda function of the non-cooperative game $\Ecal$.
	Define the marginal function associated to $\Psi_k$ in the usual way:
	\begin{align*}
		V_k\colon E & \longrightarrow \mathbb{R} \cup \{\infty\}, \\
		x           & \longmapsto \sup\{\Psi_k(x,y) : y\in E\}.
	\end{align*}
	Then the global strategy $x\in E$ is a $k$-lateral Nash equilibrium if and only if $V_k(x)=0$.
\end{theorem}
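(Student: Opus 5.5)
The plan is to prove the two implications directly from Definition~\ref{def:multi_lateral_nash_equilibrium} and Definition~\ref{def:MultilateralNikaidoIsoda}. The first step, which both directions need, is to show that $V_k(x)\geq 0$ for every $x\in E$. To see this, take $y=x$. Then for every $I\in\binom{[N]}{k}$ and every $i\in I$ we have $(y_I,x_{-I})=x$, so each term $\theta_i(y_I,x_{-I})-\theta_i(x_I,x_{-I})$ vanishes. Hence $\Psi_k(x,x)=0$, and therefore
\begin{displaymath}
	V_k(x)=\sup_{y\in E}\Psi_k(x,y)\geq 0 .
\end{displaymath}

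For the forward direction, suppose $x$ is a $k$-lateral Nash equilibrium and let $y\in E$ be arbitrary. For each $I$, the tuple $y_I$ lies in $E_I$. So the equilibrium condition gives $\theta_i(y_I,x_{-I})\leq\theta_i(x)=\theta_i(x_I,x_{-I})$ for all $i\in I$. Taking the finite maximum over $I\in\binom{[N]}{k}$ and $i\in I$ yields $\Psi_k(x,y)\leq 0$. Since $y$ was arbitrary, $V_k(x)\leq 0$, and together with the first step $V_k(x)=0$.

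For the converse, suppose $V_k(x)=0$. Fix $I\in\binom{[N]}{k}$, $i\in I$ and $z_I\in E_I$. The key observation is that any deviation $z_I$ of a coalition can be realized as a global strategy: set $y:=(z_I,x_{-I})\in E$, so that $y_I=z_I$. By definition of $\Psi_k$ as a maximum, the single term indexed by $(I,i)$ is bounded by the whole expression:
\begin{displaymath}
	\theta_i(z_I,x_{-I})-\theta_i(x)\leq \Psi_k(x,y)\leq V_k(x)=0 .
\end{displaymath}
This is exactly the inequality required in Definition~\ref{def:multi_lateral_nash_equilibrium}, so $x$ is a $k$-lateral Nash equilibrium.

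There is no real obstacle; the argument only needs some care with notation. The one subtle point is that $\Psi_k$ takes a single $y\in E$ and plugs the coordinates $y_I$ into every coalition simultaneously. One must check that this coupling does not weaken the reverse implication, and the construction $y=(z_I,x_{-I})$ handles this, since the bound only uses the one term indexed by $(I,i)$. Finally, specializing to $k=1$ makes $\binom{[N]}{1}$ the collection of singletons, so $\Psi_1=\Psi$ and Theorem~\ref{thm:ModifiedNikaidoIsoda} follows as a corollary.
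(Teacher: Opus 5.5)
Your proof is correct and follows the paper's argument essentially step for step: $\Psi_k(x,x)=0$ gives $V_k(x)\geq 0$; the equilibrium inequalities bound every term of $\Psi_k(x,y)$ by $0$ in the forward direction; and for the converse, each term indexed by $(I,i)$ is dominated by $\Psi_k(x,y)\leq V_k(x)=0$. Your explicit choice $y=(z_I,x_{-I})$ only spells out what the paper leaves implicit, namely that every coalition deviation $z_I\in E_I$ is the $I$-part of some global strategy $y\in E$.
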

\begin{proof}
	Notice that $V_k(x) \geq 0$ for all $x\in E$. Indeed, an element of the defining supremum $\sup\{\Psi_k(x,y) : y\in E\}$ of $V_k(x)$ is in particular $\Psi_k(x,x)$, which we can compute
	\begin{displaymath}
		\Psi_k(x,x) =\max_{I\in\binom{[N]}{k}}\max_{i\in I}\left(\theta_i(x_I, x_{-I}) - \theta_i(x_I, x_{-I})\right) = \max_{I\in\binom{[N]}{k}}\max_{i\in I} 0 = 0.
	\end{displaymath}
	Hence, the supremum $V_k(x) \geq 0$ must at least $\Psi_k(x,x)=0$.

	\medskip
	Assume that $x\in E$ is a $k$-lateral Nash equilibrium. Then by definition, for every subset $I\in\binom{[N]}{k}$, all $i\in I$ and all $y_I\in E_I$ we have that
	\begin{displaymath}
		\theta_i(y_I,x_{-I}) \leq \theta_i(x_I,x_{-I}) \quad \Longleftrightarrow \quad \theta_i(y_I,x_{-I}) - \theta_i(x_I,x_{-I}) \leq 0.
	\end{displaymath}
	Thus,
	\begin{displaymath}
		\Psi_k(x,y)=\max_{I\in\binom{[N]}{k}}\max_{i\in I}\left(\theta_i(y_I, x_{-I}) - \theta_i(x_I, x_{-I})\right) \leq 0,
	\end{displaymath}
	and so $V_k(x)\leq 0$.
	Together with the previous observation this implies that $V_k(x)= 0$, as claimed.

	\medskip
	For the converse, suppose that $V_k(x)=0$ for some $x\in E$.
	In other words,
	\begin{displaymath}
		\sup\{\Psi_k(x,y) : y\in E\}=0.
	\end{displaymath}
	Hence, for every $y\in E$ it holds that
	\begin{displaymath}
		\Psi_k(x,y) =\max_{I\in\binom{[N]}{k}}\max_{i\in I}\left(\theta_i(y_I, x_{-I}) - \theta_i(x_I, x_{-I})\right) \leq 0.
	\end{displaymath}
	So in particular, for all $I\in\binom{[N]}{k}$ and all $i\in I$, we have that $\theta_i(y_I, x_{-I}) - \theta_i(x_I, x_{-I})\leq 0$, or equivalently $\theta_i(y_I, x_{-I}) \leq\theta_i(x_I, x_{-I})$, which is exactly the definition of a $k$-lateral Nash equilibrium
\end{proof}

\medskip
Notice that in particular Theorem \ref{thm:MultilateralNikaidoIsodaTheorem} reduces to Theorem~\ref{thm:ModifiedNikaidoIsoda} for $k=1$.

We've already noted that there is a filtration of multilateral Nash equilibria, meaning that the set of $k+1$-lateral Nash equilibria is always a subset of the $k$-lateral Nash equilibria for $1\leq k < N$. We demonstrate a manifestation of this in the following lemma.

\begin{lemma}\label{lem:NikaidoIsodaIncreasing}
	Let $1 \leq k < l \leq N$ and $x\in E$. Then $V_k(x) \leq V_l(k)$.
\end{lemma}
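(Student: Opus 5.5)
The plan is to prove the inequality pointwise in the second argument, but not by comparing $\Psi_k(x,y)$ with $\Psi_l(x,y)$ for the same $y$. Such a comparison fails in general: in $\Psi_l(x,y)$ every coalition $J$ changes all of its $l$ coordinates to $y_J$, so no term of $\Psi_l(x,y)$ need match a given term of $\Psi_k(x,y)$. Instead, each term of $\Psi_k(x,y)$ will be realised as a term of $\Psi_l(x,y')$ for a suitably modified strategy $y'$. (The right-hand side in the statement should read $V_l(x)$.)

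Fix $x\in E$ and $y\in E$. Fix also a coalition $I\in\binom{[N]}{k}$ and a player $i\in I$. Since $k<l\leq N$, choose $J\in\binom{[N]}{l}$ with $I\subseteq J$. Define $y'\in E$ by
\begin{displaymath}
	y'_j = y_j \ \text{ for } j\in I, \qquad y'_j = x_j \ \text{ for } j\notin I .
\end{displaymath}
Only the coordinates of $y'$ indexed by $J$ matter. With this choice
\begin{displaymath}
	(y'_J, x_{-J}) = (y_I, x_{-I}).
\end{displaymath}
Indeed, on $I$ both sides have $y_I$, on $J\setminus I$ both have $x$, and off $J$ both have $x$. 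Since $i\in I\subseteq J$, the quantity $\theta_i(y_I,x_{-I})-\theta_i(x)$ equals $\theta_i(y'_J,x_{-J})-\theta_i(x)$. This is one of the terms in the double maximum defining $\Psi_l(x,y')$. Therefore
\begin{displaymath}
	\theta_i(y_I,x_{-I})-\theta_i(x_I,x_{-I}) \;\leq\; \Psi_l(x,y') \;\leq\; V_l(x).
\end{displaymath}

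Since $I$ and $i$ were arbitrary, taking the maximum over $I\in\binom{[N]}{k}$ and $i\in I$ gives $\Psi_k(x,y)\leq V_l(x)$. Taking the supremum over $y\in E$ then gives $V_k(x)\leq V_l(x)$. This chain of inequalities remains valid if $V_l(x)=\infty$.

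The only real obstacle is the one noted at the start, namely the mismatch between the terms of $\Psi_k(x,y)$ and those of $\Psi_l(x,y)$. The padding construction $y'$ removes it: the extra players in $J\setminus I$ simply "deviate" to their current strategies $x_j$. As a consistency check, combining the lemma with Theorem~\ref{thm:MultilateralNikaidoIsodaTheorem} and the fact that $V_k\geq 0$ recovers the filtration \eqref{eq:filtration}: if $V_l(x)=0$ then $0\leq V_k(x)\leq 0$.
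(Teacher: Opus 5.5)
Your proof is correct and is essentially the paper's argument. The paper likewise pads the deviation to $y^*=(y_{I^*},x_{-I^*})$ and embeds $I^*$ into some $J\in\binom{[N]}{l}$ so that $(y^*_J,x_{-J})=(y_{I^*},x_{-I^*})$; the only cosmetic difference is that it first fixes a pair $(I^*,i^*)$ attaining the maximum in $\Psi_k(x,y)$ rather than bounding every term, and you are right that the right-hand side should read $V_l(x)$.
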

\begin{proof}
	We claim that for every $y\in E$ there exists $y^*\in E$ such that $\Psi_l(x,y^*) \geq \Psi_k(x,y)$. This would mean that every element of $\{\Psi_k(x,y)\colon y\in E\}$ is smaller than an element of $\{\Psi_l(x,y)\colon y\in E\}$, so $V_l = \sup\{\Psi_l(x,y)\colon y\in E\}$ must be larger than $V_k(x) = \sup \{\Psi_k(x,y)\colon y\in E\}$.

	Indeed, let $y\in E$ be arbitrary and consider
	\begin{displaymath}
		\Psi_k(x,y) =\max_{I\in\binom{[N]}{k}}\max_{i\in I}\left(\theta_i(y_I, x_{-I}) - \theta_i(x_I, x_{-I})\right) \leq 0.
	\end{displaymath}
	Since the maximum is finite, it must be attained for some fixed $i^* \in I^*$:
	\begin{displaymath}
		\Psi_k(x,y) = \theta_{i^*}(y_{I^*},x_{-I^*}) - \theta_{i^*}(x_{I^*},x_{-I^*}).
	\end{displaymath}
	Define $y^* \in E$ as $ y^* := (y_{I^*}, x_{-I^*})$ and choose any $J\binom{[N]}{l}$ with $I^*\subseteq J$. Notice that by construction of $y^{*}$ we then have $(y^{*}_{I^*}, x_{-I^*}) = (y^{*}_{J}, x_{-J})$. With this, we compute
	\begin{align*}
		\Psi_l(x,y^*) & = \max_{I\in\binom{[N]}{k}}\max_{i\in I}\left(\theta_i(y_I, x_{-I}) - \theta_i(x_I, x_{-I})\right) \\
		              & \geq \theta_{i^*}(y^{*}_J, x_{-J}) - \theta_i(y^*_J, x_{-J})                                       \\
		              & =  \theta_{i*}(y^{*}_{I^*}, x_{-I^*}) - \theta_i(y^*_{I^*}, x_{-I^*})                              \\
		              & = \Psi_k(x,y),
	\end{align*}
	as claimed.
\end{proof}

Recall that $V_k(x)\geq 0$ and Theorem~\ref{thm:MultilateralNikaidoIsodaTheorem}, which says that $x\in E$ is a $k$-lateral Nash equilibrium if and only if $V_k(x)=0$.
If $V_l \geq V_k$ for $1\leq k < l \leq N$, we see that $\NashEquilibria_k(\Ecal) = \{x\in E\colon V_k(x)=0\}\supseteq\{x\in E\colon V_l(x)=0\} = \NashEquilibria_l(\Ecal)$, recovering the filtration of multilateral Nash equilibria once again.

\medskip
Now, mirroring the classical approach, we introduce the best-reply correspondence associated to the newly defined multilateral
Nikaido--Isoda function.
Once more, we start by recalling the classical approach.

\begin{definition}\label{def:ClassicalNIBestReply}
	Let $\Psi_c\colon E\times E\longrightarrow \mathbb{R}$ be the classical Nikaido--Isoda function of $\Ecal$ with associated marginal function $V_c\colon E\longrightarrow\mathbb{R}\cup\{\infty\}$.
	The {\em Nikaido--Isoda best-reply correspondence} is the set-valued map
	\begin{align*}
		R_c\colon E & \longrightarrow 2^{E},                       \\
		x           & \longmapsto \{y\in E : \Psi_c(x,y)=V_c(x)\}.
	\end{align*}
\end{definition}

Curiously, in this classical case, we've just recovered the best-reply correspondence $\Phi\colon E\longrightarrow 2^{E}$ associated to the non-cooperative game $\Ecal$.
\begin{theorem}\label{thm:NIBestReplyEqualBestReplyClassic}
	Keeping the previously made assumptions and notations, for all global strategies $x\in E$ it holds that $\Phi(x) = R_c(x)$.
\end{theorem}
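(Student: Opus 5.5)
Since $\Psi_c(x,y)=\sum_{i\in[N]}\big(\theta_i(y_i,x_{-i})-\theta_i(x_i,x_{-i})\big)$ is a sum of terms each depending on a different coordinate $y_i$, maximizing over $y\in E=E_1\times\cdots\times E_N$ splits into $N$ independent maximizations. I would first prove this separation for the marginal function. For fixed $x$, the $i$-th summand depends on $y$ only through $y_i\in E_i$, and the subtracted quantity $\theta_i(x)$ is a constant. Hence
\begin{displaymath}
	V_c(x)=\sup_{y\in E}\Psi_c(x,y)=\sum_{i\in[N]}\Big(\sup_{y_i\in E_i}\theta_i(y_i,x_{-i})-\theta_i(x)\Big)=\sum_{i\in[N]}\big(\phi_i(x)-\theta_i(x)\big).
\end{displaymath}
Here $\phi_i$ is the marginal function of Definition~\ref{def:MarginalFunction}. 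The supremum of a sum of functions of independent variables equals the sum of the suprema. I would state the standing assumption that the $\theta_i$ are bounded real functions, or that the $E_i$ are compact and the $\theta_i$ continuous, so that every $\phi_i(x)$ is finite and no $\infty-\infty$ issues arise.

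Next I would prove the two inclusions. For $\Phi(x)\subseteq R_c(x)$: if $y\in\Phi(x)$, then $y_i\in\Phi_i(x)$ for each $i$, so $\theta_i(y_i,x_{-i})=\phi_i(x)$. Summing gives $\Psi_c(x,y)=\sum_i(\phi_i(x)-\theta_i(x))=V_c(x)$, hence $y\in R_c(x)$. For $R_c(x)\subseteq\Phi(x)$: let $\Psi_c(x,y)=V_c(x)$. Then
\begin{displaymath}
	\sum_{i\in[N]}\big(\phi_i(x)-\theta_i(y_i,x_{-i})\big)=0,
\end{displaymath}
and every summand is nonnegative by the definition of $\phi_i$ as a supremum. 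Therefore each summand vanishes, so $y_i\in\Phi_i(x)$ for all $i$, and $y\in\Phi(x)$.

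The main obstacle is the finiteness issue in the second inclusion. The nonnegativity argument only works when each $\phi_i(x)$ is a real number. If some $\phi_i(x)=\infty$, then $V_c(x)=\infty$, and no $y$ attains it, so $R_c(x)=\emptyset=\Phi_i(x)\subseteq$ and both sides are empty anyway. This degenerate case can be handled separately, or excluded by the boundedness convention adopted after Definition~\ref{def:MarginalFunction}. Beyond this, the proof is just the separability computation above.
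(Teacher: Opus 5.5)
Your proof is correct. Splitting $V_c(x)=\sum_{i}\big(\phi_i(x)-\theta_i(x)\big)$ coordinatewise, and then using that each term $\phi_i(x)-\theta_i(y_i,x_{-i})$ is nonnegative, gives both inclusions. You also treat the only loophole, $\phi_i(x)=\infty$, separately, which is right. Tidy the garbled phrase ``$R_c(x)=\emptyset=\Phi_i(x)\subseteq$'' to say that $\Phi_i(x)=\emptyset$, hence $\Phi(x)=\emptyset=R_c(x)$.

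The paper gives no in-text proof of this statement and defers to the first author's thesis and a companion paper. Your separability argument is the natural proof of this fact. It also shows why the analogue breaks for $\Psi_k$: a maximum over overlapping coalitions $I$ does not split across coordinates.
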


\medskip
For the proof of this fact see \cite{Blagojevic2024} or \cite[Prop.\,3.4]{blagojevic2025topologygeneralizednashequilibrium}.
In plain words, in classical case, there is not so much benefit from Nikaido and Isoda approach.
We will soon see that this dramatically changes in the multilateral context.

\medskip
Thus, an immediate corollary of Theorem \ref{thm:NIBestReplyEqualBestReplyClassic}, using the global strategy Theorem~\ref{thm:NashBestReplyCondition} is that $x\in E$ is a Nash equilibrium if and only if $x$ is a fixed point of the correspondence $R_c$, that is $x\in R_c(x)$.

\medskip
As already mentioned, Theorem~\ref{thm:NIBestReplyEqualBestReplyClassic} does not extend to multilateral best-reply correspondences, however an analogue of its corollary does remain valid.
For that, we will introduce the multilateral Nikaido--Isoda best-reply correspondence, which is now actually distinct from the multilateral best-reply correspondence.

\begin{theorem}\label{thm:MultilateralNIBestReply}
	Let $N\geq 1$ and $1\leq k\leq N$ be integers and let $\Ecal=(E_i,\,\theta_i : 1\leq i\leq N)$ be an $N$-player non-cooperative game.
	Suppose that $\Psi_k$ and $V_k$ are defined as above.
	The $k$-lateral Nikaido--Isoda best-reply correspondence is defined to be
	\begin{align*}
		R_k\colon E & \longrightarrow 2^{E} ,                      \\
		x           & \longmapsto \{y\in E : \Psi_k(x,y)=V_k(x)\}.
	\end{align*}
	Then the global strategy $x\in E$ is a $k$-lateral Nash equilibrium if and only if $x\in R_k(x)$.
\end{theorem}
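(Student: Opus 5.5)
The plan is to reduce the statement to Theorem~\ref{thm:MultilateralNikaidoIsodaTheorem}, which already characterizes $k$-lateral Nash equilibria as exactly the zeros of $V_k$. The bridge is the identity $\Psi_k(x,x)=0$, computed at the start of the proof of that theorem. Since $x\in R_k(x)$ means precisely $\Psi_k(x,x)=V_k(x)$, we get
\begin{displaymath}
	x\in R_k(x) \iff V_k(x)=\Psi_k(x,x)=0 \iff x\in\NashEquilibria_k(\Ecal),
\end{displaymath}
where the last equivalence is Theorem~\ref{thm:MultilateralNikaidoIsodaTheorem}.

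For the forward direction, take $x$ a $k$-lateral Nash equilibrium. Theorem~\ref{thm:MultilateralNikaidoIsodaTheorem} gives $V_k(x)=0$. Because $\Psi_k(x,x)=0$, the point $y=x$ attains the supremum defining $V_k(x)$, so $x\in R_k(x)$ by definition of $R_k$.

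For the converse, take $x\in R_k(x)$. Then $V_k(x)=\Psi_k(x,x)=0$, and Theorem~\ref{thm:MultilateralNikaidoIsodaTheorem} shows that $x$ is a $k$-lateral Nash equilibrium.

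The only delicate point is the case $V_k(x)=\infty$, which can occur when the profit functions are unbounded. In that case $R_k(x)$ is empty, so $x\notin R_k(x)$. This is consistent with the statement, since $V_k(x)\neq 0$ and hence $x$ is not a $k$-lateral Nash equilibrium. No compactness or continuity hypotheses are needed, and I expect no real obstacle beyond recording $\Psi_k(x,x)=0$ explicitly.
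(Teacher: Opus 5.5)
Your proposal is correct and follows the paper's proof essentially verbatim. Both directions go through Theorem~\ref{thm:MultilateralNikaidoIsodaTheorem} via the identity $\Psi_k(x,x)=0$, so $x\in R_k(x)$ is equivalent to $V_k(x)=0$; your remark on $V_k(x)=\infty$ (where $R_k(x)=\emptyset$ because $\Psi_k$ is real-valued) is a harmless sanity check the paper leaves implicit.
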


\begin{proof}
	Suppose that the global strategy  $x\in E$ is a $k$-lateral Nash equilibrium.
	According to Theorem~\ref{thm:MultilateralNikaidoIsodaTheorem}, this is equivalent to the equality $V_k(x)=0$.
	But then $V_k(x) = 0 = \Psi_k(x,x)$, and so $x\in R_k(x)$ by definition of $R_k$.

	\medskip
	On the other hand, assume that $x\in R_k(x)$.
	Then by definition $\Psi_k(x,x) = V_k(x)$, and so $V_k(x) = 0$.
	Again equivalence of Theorem~\ref{thm:MultilateralNikaidoIsodaTheorem} yields that now $x$ is a $k$-lateral Nash equilibrium.
\end{proof}

\medskip
Using this theorem, we can simplify our search for a $k$-lateral Nash equilibrium, because instead of looking for an intersection of the images of the set-valued maps
\begin{displaymath}
	F\colon E\longrightarrow \prod_{I\in\binom{[N]}{k}} 2^{E_I} \qquad \text{and} \qquad G\colon E\longrightarrow \prod_{I\in\binom{[N]}{k}}E_I,
\end{displaymath}
from Theorem~\ref{thm:BestReplyConditionMultilateralNash}, we can simply look for fixed points of the set-valued map $R_k\colon E\longrightarrow 2^{E}$.

\section{Remarks}

There are several observations regarding multilateral equilibria in non-cooperative games worth mentioning.

\subsection{Preference relations}
The preference relations introduced in this section are preorder relations. We discuss a few properties of preorder relations in general.

\medskip
A preorder $\rho$ on $A$ is a reflexive and transitive relation. It defines an equivalence relation $\sim_\rho$ on $A$ via $a\sim_\rho b$ if and only if $a\rho b$ and $b\rho a$. With this equivalence, the quotient set $A/\sim_{\rho}$ becomes a totally ordered set. Slightly abusing notation, we set $[a] \rho [a']$ if $a\rho a'$, where $[a]$ denotes the equivalence class of the element $a\in A$. Here $\rho$ denotes both the order relation on  $A/\sim_{\rho}$ and the preorder relation on $A$.

\medskip
Every profit function $\theta_i\colon E\longrightarrow\mathbb{R}$ induces a total preorder $\prec_i$ on $E$ via the pullback construction.
That is, for $x,y\in E$ we set
\begin{displaymath}
	x \prec_i y
	\qquad \Longleftrightarrow \qquad \theta_i(x) \leq \theta_i(y).
\end{displaymath}

\medskip
Note that it is possible that $x\prec_i y$ and $y\prec_i x$ while $x\neq y$.
Clearly, different profit functions may induce the same preorders.
Yet the entire game is completely characterized by the preorders $(\prec_1,\dots,\prec_N)$. In particular, whether $x\in E$ is a $k$-lateral Nash equilibrium depends only on the preorders.

\medskip
The collection of total preorders $(\prec_1,\dots,\prec_N)$ introduces an additional partial preorder on $E$ by setting
\begin{displaymath}
	x \prec y
	\qquad \Longleftrightarrow \qquad (\forall i\in[N])\ 	x \prec_i y.
\end{displaymath}
The partial preorder $\prec$ further induces an equivalence relation on $E$ via
\begin{displaymath}
	x \sim y \qquad \Longleftrightarrow \qquad   x \prec y \ \land \ y \prec x.
\end{displaymath}
In other words, if $x\sim y$ then all players are indifferent between strategies $x$ and $y$, and essentially the strategies are identical from the point of view of the profit functions.
Note that in the case this preorder is a total preorder, the profit functions $(\theta_1,\dots,\theta_N)$ are essentially the same, that is, they see any two strategies in the same way. So all the players have identical preferences.

\medskip
If we consider $E/\sim$ with the quotient order relation, then we can see that there can only ever be at most one $N$-lateral Nash equilibrium, and additionally, it is the unique maximum under the quotient order induced by $\prec$.
Indeed, assume that $x$ and $x'$ are two distinct $N$-lateral Nash equilibria.
Since $x$ is an $N$-lateral Nash equilibria then in particular $[x']\prec [x]$, but $x'$ is also an $N$-lateral Nash equilibrium and so $[x]\prec [x']$.
The relation $\prec$ is the quotient order induced by the preorder $\prec$ and so antisymmetry implies that $[x]= [x']$.

\medskip
If there are multiple maximal strategies with respect to $\prec$, then they are not $N$-lateral Nash equilibria, but they are Pareto-optimal. For more insight in the approach to non-cooperative games from the point of view of preference relations, refer to the literature on \textit{Pure Exchange Economies}, e.g. an introduction can be found in the classical book on microeconomy by Henderson and Quandt~\cite{HendersonQuandt1958} as well as the classical work by Tatsuro Ichiishi \cite{Ichiishi_1983}.

\subsection{The Nikaido--Isoda best-reply correspondence}

In the criterion of Theorem~\ref{thm:MultilateralNIBestReply}, we have shown that a $k$-lateral Nash equilibrium of a non-cooperative game is a fixed point of the Nikaido--Isoda best-reply correspondence $R_k$. We recall the definition here
\begin{align*}
	R_k\colon E & \longrightarrow 2^{E} ,                      \\
	x           & \longmapsto \{y\in E : \Psi_k(x,y)=V_k(x)\}.
\end{align*}
Even though this criterion seems rather simple, it is, in fact, not easy to verify using classical methods.
This mostly comes down to the multilateral Nikaido--Isoda function, $\Psi_k$, not having many convenient properties.
If the profit functions $\theta_i$ are assumed to be continuous and bounded, then so are the functions $\Psi_k$ continuous and bounded, as the maximum of continuous and bounded functions.
The Maximum Theorem~\ref{thm:MaximumTheorem} then yields that indeed $V_k$ is also continuous and $R_k$ is upper-semicontinuous.

\medskip
Unfortunately, this is where the ``nice properties'' end.
If we wished to use, for example, the classical Kakutani fixed point theorem, we are confronted with an issue.
First, recall the statement of Kakutani's result.

\begin{theorem}\label{thm:KakutaniFixedPointTheorem}
	Let $E\subseteq \mathbb{R}^n$ be a non-empty, compact, convex subset of Euclidean space. Assume that $F\colon E\longrightarrow 2^E$ is an upper-semicontinuous, convex-valued set-valued map. Then $F$ has a fixed point.
\end{theorem}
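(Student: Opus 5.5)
We prove the theorem by reducing it to Brouwer's fixed point theorem, using piecewise linear approximations of $F$ on finer and finer triangulations. We also assume the hypotheses implicit in the classical statement: $F(x)$ is non-empty and closed for every $x\in E$. Since $E$ is compact, the Closed graph lemma (Lemma~\ref{lem:ClosedGraphLemma}) then shows that $\operatorname{Graph}(F)\subseteq E\times E$ is closed, and we use the graph in that form.

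\medskip
First we reduce to the case where $E$ is a simplex. Choose a large simplex $\Delta\supseteq E$ and let $r\colon\Delta\longrightarrow E$ be the nearest-point retraction, which is continuous because $E$ is compact and convex. The map $F\circ r\colon\Delta\longrightarrow 2^{E}\subseteq 2^{\Delta}$ is again upper semicontinuous and has non-empty closed convex values. Any fixed point $x\in F(r(x))$ lies in $E$, so $r(x)=x$ and $x\in F(x)$. Hence we may assume $E=\Delta$.

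\medskip
Next, for each $m\geq 1$ take the $m$-th barycentric subdivision of $\Delta$, whose mesh tends to $0$. At every vertex $v$ choose a point $f_m(v)\in F(v)$, and extend $f_m$ affinely on each simplex of the subdivision. This gives a continuous map $f_m\colon\Delta\longrightarrow\Delta$, and its values lie in $\Delta$ by convexity. By Brouwer, $f_m$ has a fixed point $x_m$. Let $x_m$ lie in a simplex with vertices $v^m_0,\dots,v^m_n$, and write $x_m=\sum_j\lambda^m_j v^m_j$. Then
\begin{displaymath}
	x_m=f_m(x_m)=\sum_j \lambda^m_j y^m_j, \qquad y^m_j:=f_m(v^m_j)\in F(v^m_j).
\end{displaymath}

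\medskip
Finally we pass to the limit. By compactness of $\Delta$ and of the standard simplex of weights, we can extract a subsequence along which $x_m\to x$, $\lambda^m_j\to\lambda_j$ and $y^m_j\to y_j$ for every $j$. Since the mesh tends to $0$, each $v^m_j\to x$. Closedness of the graph then gives $(x,y_j)\in\operatorname{Graph}(F)$, that is, $y_j\in F(x)$. We also have $x=\sum_j\lambda_j y_j$ with $\lambda_j\geq0$ and $\sum_j\lambda_j=1$. Convexity of $F(x)$ now yields $x\in F(x)$.

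\medskip
The main obstacle is this last step. Convexity alone is not enough: we need the limit points $y_j$ to land in the single set $F(x)$, and this is exactly where upper semicontinuity enters, via the closed graph. The extraction must be done simultaneously over the finitely many indices $j$, which is possible because $n$ is fixed. If the closed-valuedness hypothesis were unavailable, we would instead argue with $\varepsilon$-neighbourhoods: for large $m$ all $y^m_j$ lie in the convex set $F(x)+B_\varepsilon$, hence so does $x_m$. This gives $x\in\overline{F(x)}$, and closedness is then needed to finish.
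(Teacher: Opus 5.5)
The paper does not prove this statement. It only recalls Kakutani's theorem as a classical result, in order to explain why the correspondence $R_k$, which need not be convex-valued, falls outside its scope. So there is no argument in the paper to compare yours against.

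Your proof is the classical one, essentially Kakutani's original argument. You retract onto a simplex, take piecewise-affine selections on finer and finer subdivisions, apply Brouwer, and pass to the limit using the closed graph and convexity of $F(x)$. It is correct as written.

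There is one small imprecision. The Closed graph lemma requires a closed domain. You get this because the values are non-empty, so the domain is all of $E$; compactness is not what provides it.

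You were right to add the hypotheses of non-empty and closed values. They are not merely implicit conventions: without them the statement as printed is false.
\begin{itemize}
\item The map $F\equiv\emptyset$ is vacuously upper semicontinuous and has no fixed point.
\item For closedness, take $E=[0,1]$ with $F(x)=(\tfrac12,1]$ for $x\le\tfrac12$ and $F(x)=(\tfrac12,\tfrac{1+2x}{4}]$ for $x>\tfrac12$. This map is upper semicontinuous with non-empty convex values. Yet $x\notin F(x)$ for every $x$, since $\tfrac{1+2x}{4}<x$ whenever $x>\tfrac12$.
\end{itemize}
This matches your closing remark: the only point with $x\in\overline{F(x)}$ is $x=\tfrac12$, and it is not a fixed point of $F$ itself.
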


Now, to apply Kakutani's fixed point theorem, we would need for $R_k$ to be convex-valued, but in fact, it may not be convex-valued.
The usual argument for the classical Nikaido--Isoda function relies on the assumption that the profit functions $\theta_i$ are concave and that the Nikaido--Isoda function defined via the sum, as in Definition~\ref{def:NikaidoIsodaClassical}
\begin{align*}
	\Psi_c\colon E\times E & \longrightarrow \mathbb{R} ,                                                          \\
	(x,y)                  & \longmapsto \sum_{i\in[N]} \left(\theta_i(y_i,x_{-i}) - \theta_i(x_i, x_{-i})\right),
\end{align*}
is a sum of concave functions, and so also concave.
But the maximum of concave functions isn't necessarily concave.

\medskip
One could ask: Why not express the multilateral Nikaido--Isoda function via a sum as well?
Because then, the critical from Theorem~\ref{thm:MultilateralNikaidoIsodaTheorem} wouldn't hold: To prove that $V_k(x)=0$ implies that $x$ is a $k$-lateral equilibrium, we would have to show for any $I\in\binom{[N]}{k}$ that:
\[
	\max_{i\in I}\left(\theta_i(y_I, x_{-I}) - \theta_i(x_I, x_{-I})\right) \leq 0.
\]
But this term cannot be isolated from the sum, if $\Psi_k$ is expressed a sum of these terms. So we could not verify that $x$ really is a $k$-lateral equilibrium. The proof of the Nikaido--Isoda best-reply criterion in Theorem \ref{thm:MultilateralNIBestReply} relies on this theorem.

\part{Families of non-cooperative games}

\section{Definition and basic properties}\label{sec:familes_of_games}
We begin with a definition of a family of non-cooperative games in a very intuitive and naive way motivated by the presentation of families of vector spaces in the work of Atiyah \cite{Atiyah1967}.
The notions introduced and discussed in this section can also be extended to the context of abstract economies and repeated games.

\begin{definition}\label{def:FamilyOfNonCooperativeGamesNaive}
	Let $N \geq 1$ be an integer and let $B$ be some parameter (topological) space.
	Assume that for every point $b\in B$ we have an $N$-player non-cooperative game $\Ecal^{(b)}:=(E^{(b)}_i,\theta^{(b)}_i : 1\leq i\leq N)$.
	The collection $\Ecal/B:=(\Ecal^{(b)}:b\in B)$ of non-cooperative games is called a {\em family of non-cooperative games over} $B$.
\end{definition}

One possible interpretation of this definition, from the point of view of game theory, is that the parameter $b\in B$ is an extrinsic signal given to the game which influences both the payoffs and strategy spaces of all players.

\medskip
To extend the basic notions of non-cooperative games, we do not need any more structure than Definition~\ref{def:FamilyOfNonCooperativeGamesNaive} already provides.

\medskip
Let $E_i := \bigsqcup_{b\in B} E_i^{(b)}$ be the disjoint union of $E_i^{(b)}$ over $B$ for every $1\leq i\leq N$.
For a moment we do not assume any additional structure on the $E_i$'s.
The {\em global strategy space} $E$  of $\Ecal/B$ is the fiberwise product (over $B$) of the strategy spaces $E_i$, that is
\begin{displaymath}
	E := E_1 \times_B \dots \times_B E_N := \bigsqcup_{b\in B}  E_1^{(b)}\times\dots\times E_N^{(b)}=\bigsqcup_{b\in B}  E^{(b)} \subseteq E_1\times\dots\times E_N,
\end{displaymath}
where $ E^{(b)}:=E_1^{(b)}\times\dots\times E_N^{(b)}$.
Each of the objects we just introduced come with the natural projection to the parameter space $B$:
\begin{displaymath}
	\pi_i\colon E_i\longrightarrow B, \ e_i\longmapsto b,\qquad
	\pi\colon E\longrightarrow B, \ e\longmapsto b,
\end{displaymath}
for $e_i\in E_i^{(b)}\subseteq E_i$ and $e\in E^{(b)}\subseteq E$.

\medskip
Now, the families of the players' profit functions
\[
	(\theta_1^{(b)}\colon E^{(b)}\longrightarrow\R : b\in B),\dots, (\theta_N^{(b)}\colon E^{(b)}\longrightarrow\R : b\in B)
\]
define the {\em players profit functions} on $E$ by
\begin{displaymath}
	\theta_i \colon E \longrightarrow   \mathbb{R}, \qquad x \longmapsto  \theta_i^{(b)} (x),
\end{displaymath}
for $x\in E^{(b)}=E_1^{(b)}\times\dots\times E_N^{(b)}  = E$ and $1\leq i\leq N$.

\medskip
Accordingly, the marginal functions can be defined in a similar fashion.
For $b\in B$ let $\phi_i^{(b)} \colon E^{(b)}\longrightarrow\mathbb{R}$ be the marginal function of player $i$ in game $\Ecal^{(b)}$.
That is,
\begin{align*}
	\phi_i^{(b)}\colon E^{(b)} = E_1^{(b)}\times\dots\times E_N^{(b)} & \longrightarrow \mathbb{R}\cup\{\infty\}  ,                        \\
	x = (x_1,\dots,x_N)                                               & \longmapsto \sup\{\theta_i^{(b)}(y_i,x_{-i}) : y_i\in E_i^{(b)}\},
\end{align*}
see Definition~\ref{def:MarginalFunction}.
Then we define the {\em marginal functions} of the family of non-cooperative games $\Ecal/B$ to be
\begin{displaymath}
	\phi_i \colon E \longrightarrow   \mathbb{R}, \qquad x \longmapsto  \phi_i^{(b)} (x),
\end{displaymath}
for $x\in E^{(b)} \subseteq \bigsqcup_{b\in B}E^{(b)} = E$ and $1\leq i\leq N$.

\medskip
The notion of the best-reply correspondence can also be introduced in this setting.
Denoting the best-reply correspondence of the game $\Ecal^{(b)}$, as in Definition~\ref{def:BestReplyCorrespondence}, with $\Phi^{(b)} \colon E^{(b)}\longrightarrow 2^{E^{(b)}}$, we define the {\em best-reply correspondence} of the family of non-cooperative games $\Ecal/B$ by
\begin{equation}\label{eq:def_best_reply_familes}
	\Phi \colon E \longrightarrow 2^{E}, \quad x \longmapsto \Phi^{(b)} (x),
\end{equation}
for $x\in E^{(b)} \subseteq \bigsqcup_{b\in B}E^{(b)} = E$.

\medskip
The novel property of the best-reply correspondence of $\Ecal/B$ compared to the classical notion is that the following diagram commutes
\begin{equation}\label{eq:commutative_diagram}
	\xymatrix{
	E\ar[rr]^-{\Phi}\ar[dr]_{\widehat{\pi}} & & 2^E\ar[dl]^{\widetilde{\pi}}\\
	& 2^B &}
\end{equation}
where $\widehat{\pi}\colon E\longrightarrow 2^B$ is given by $e\longmapsto\{\pi(e)\}$ for $e\in E$, and $\widetilde{\pi}\colon 2^E\longrightarrow 2^B$ is given by $U\longmapsto \pi(U)$ for $U\subseteq E$.
In the terminology on fiber bundles we can say that the best-reply correspondence of $\Ecal/B$ is a fiberwise set-valued map.

\medskip
Following in the footsteps of the proof of Theorem~\ref{thm:NashBestReplyCondition} and having in mind definitions for the families on non-cooperative games one can show the following claim.

\begin{theorem}\label{thm:FiberwiseBestReplyCondition}
	Assume the notation introduced so far: There exists a parameter $b\in B$ such that the non-cooperative game $\Ecal^{(b)}$ from the family $\Ecal/B$ has a Nash equilibrium if and only if the best-reply correspondence $\Phi \colon E \longrightarrow 2^{E}$ of $\Ecal/B$  has a fixed point.
\end{theorem}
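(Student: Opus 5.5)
The plan is to reduce the statement fiberwise to the classical criterion, Theorem~\ref{thm:NashBestReplyCondition}, applied to each individual game $\Ecal^{(b)}$. The key observation is the commutativity of diagram~\eqref{eq:commutative_diagram}: for $x\in E^{(b)}$ we have, by definition~\eqref{eq:def_best_reply_familes}, $\Phi(x)=\Phi^{(b)}(x)\subseteq E^{(b)}$. Thus $\Phi$ never moves a point out of its own fiber, and the fixed-point condition $x\in\Phi(x)$ for $x\in E^{(b)}$ is literally the same as $x\in\Phi^{(b)}(x)$.

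For the forward direction, suppose that for some $b\in B$ the game $\Ecal^{(b)}$ has a Nash equilibrium $x\in E^{(b)}$. Theorem~\ref{thm:NashBestReplyCondition}, applied to $\Ecal^{(b)}$, gives $x\in\Phi^{(b)}(x)$. Since $x\in E^{(b)}\subseteq E$ and $\Phi(x)=\Phi^{(b)}(x)$, the point $x$ is a fixed point of $\Phi$.

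For the converse, let $x\in E$ with $x\in\Phi(x)$. Because $E=\bigsqcup_{b\in B}E^{(b)}$ is a disjoint union, there is a unique $b=\pi(x)$ with $x\in E^{(b)}$. Then $x\in\Phi(x)=\Phi^{(b)}(x)$, so Theorem~\ref{thm:NashBestReplyCondition} shows that $x$ is a Nash equilibrium of $\Ecal^{(b)}$.

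There is no real obstacle here. The only point needing care is that the fiber of $x$ is well defined, which is what the disjoint union guarantees, so that $\Phi(x)$ is computed using the marginal and profit functions of the correct game $\Ecal^{(b)}$. The same bookkeeping handles empty fibers: if $\Ecal^{(b)}$ is the empty game, then $E^{(b)}=\emptyset$, so that fiber contributes no candidate points on either side of the equivalence.
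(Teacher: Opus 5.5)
Your proposal is correct and follows essentially the same route as the paper: both directions come from applying Theorem~\ref{thm:NashBestReplyCondition} to the single game $\Ecal^{(\pi(x))}$, using that $\Phi(x)=\Phi^{(\pi(x))}(x)$ by construction. Your remark that the disjoint union makes $\pi(x)$ well defined is exactly the bookkeeping the paper's short proof uses implicitly.
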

\begin{proof}
	If $e\in E$ is a fixed point of $\Phi$, then  $e\in\Phi^{(\pi(e))}(e)$, so by Theorem~\ref{thm:NashBestReplyCondition} we have that $e$ is a Nash equilibrium.
	On the other hand, if  $e\in E^{(b)}$ is a Nash equilibrium, then it is a fixed point of $\Phi^{(b)}$ and consequently, by construction, also of $\Phi$.
\end{proof}

\medskip
Like in the classical case of a single non-cooperative game the central questions for families of non-cooperative games are as follows.

\begin{question}
	Let $N\geq 1$ be an integer and let $\Ecal/B:=(\Ecal^{(b)}:b\in B)$ be a family of non-cooperative games over $B$.
	\begin{compactenum}[\rm \quad (A)]
		\item Is there a point $b\in B$ such that the non-cooperative game $\Ecal^{(b)}$ has a Nash equilibrium, or in other words, is there $b\in B$ with the property that $\NashEquilibria (\Ecal^{(b)})\neq\emptyset$?
		\item What can be said about the subspace $\{b\in B : \NashEquilibria (\Ecal^{(b)})\neq\emptyset\}$ of $B$?
		\item What can be said about the subspace  $\bigsqcup_{b\in B}\NashEquilibria (\Ecal^{(b)})\subseteq \bigsqcup_{b\in B}E^{(b)}$ of $E$?
	\end{compactenum}
\end{question}

\medskip
In order to study the questions we just proposed, some additional assumptions on the family of games $\Ecal/B:=(\Ecal^{(b)}:b\in B)$ need to be made.
First, we assume that the strategy spaces $E_i$ of all players are total spaces of fiber bundles over $B$.
Then, the global strategy space $E$ is indeed the fiber product $E_1\times_B\cdots\times_B E_N$ and so the best-reply correspondence $\Phi \colon E \longrightarrow 2^{E}$ maps each point $e\in E$ into a subspace of the fiber $\pi^{-1}(\{\pi(e)\})$ of $E$ over $\pi(e)$.
Second, let each $E_i$ be subbundle of the vector bundle $V_i$ over $B$.
So, in particular, the global strategy space $E$ is a subbundle of the (real) vector bundle $V_1\times_B\cdots\times_B V_N$.

\medskip
The assumptions we have made allow us to use the well established language and tools of bundle theory.
Many classical sources for this theory are available, like for example \cite{Atiyah1967, Husemoller1994}.
In this framework we derive further criterions for the existence of Nash equilibrium for families of games.

\medskip
From now on assume that $\Ecal/B:=(\Ecal^{(b)}:b\in B)$ is a family of games with the following properties
\begin{compactitem}[\quad --]
	\item $E_i$ is a fiber bundle over $B$, which is a subbundle of a finite dimensional (real) vector bundle $V_i$ over $B$ for all $1\leq i\leq N$,
	\item $E:=E_1\times_B\cdots \times_BE_N$ is the fiber product bundle over $B$ which is also a subbundle of the fiber product of vector bundle $V_1\times_B\cdots\times_B V_N\cong V_1\oplus\cdots\oplus V_N$,
	\item the marginal functions $\phi_i \colon E \longrightarrow \mathbb{R}$ are continuous with respect to the inherited topology on $E$ for all $1\leq i\leq N$,
	\item the best-reply correspondence $\Phi \colon E \longrightarrow 2^{E}$ is upper-semi continuous with respect to the topology on $E$.
\end{compactitem}
In this way the spaces are equipped with appropriate topologies and the maps are continuous with respect to such choices.
These assumptions are not restrictive since they cover all relevant situations. Note that in particular, if the rather mild assumptions of the Maximum Theorem~\ref{thm:MaximumTheorem} are satisfied, then that theorem implies that the marginal functions are continuous, so even this assumption is not particularly restrictive.

\medskip
To derive a new Nash equilibrium existence criterion we begin by considering the pullback of the bundle $E\times_B E$ along the projection map $\pi\colon E\longrightarrow B$, as in the following pullback square diagram
\begin{equation}\label{eq:pull-back}
	\xymatrix{
	\pi^*(E\times_B E)\ar[rr]^-{\Pi}\ar[d]^{\mu} & & E\times_B E\ar[d]^{\pi\times_B\pi}\\
	E\ar[rr]^-{\pi}& & B.
	}
\end{equation}
Recall that by the definition of the pullback
\begin{displaymath}
	\pi^*(E\times_B E) := \{(e, (e', e'')) \in E\times (E \times_B E): \pi(e) = (\pi \times_B \pi)(e',e'') \}.
\end{displaymath}
Since the best-reply correspondence $\Phi \colon E \longrightarrow 2^{E}$ is fiberwise, meaning it satisfies the property \eqref{eq:commutative_diagram}, it induces the set-valued section $\widetilde{\Phi}\colon E\longrightarrow 2^{\pi^*(E\times_B E)}$ of the bundle $\pi^*(E\times_B E)$ given by
\begin{displaymath}
	\widetilde{\Phi} (e):= \{e\}\times\{e\}\times  \Phi(e) \ \subseteq \ \mu^{-1}(\{e\})  \ \subseteq \ \pi^*(E\times_B E).
\end{displaymath}
The fact that $\widetilde{\Phi}$ is a set-valued section means that $\mu\big(\widetilde{\Phi} (e)\big)=\{e\}$.

\medskip
The (single-valued) section $\delta\colon E\longrightarrow \pi^*(E\times_B E)$ of the bundle $\pi^*(E\times_B E)$ given by $\delta(e)=(e,e,e)$ induces the set-valued section $\Delta\colon E\longrightarrow 2^{\pi^*(E\times_B E)}$ by $\Delta(e)=\{\delta(e)\}$.
This section can be called the diagonal section of the fiber product.

\medskip
Now the new criterion for the existence of Nash equilibria for families of games can be stated as follows.

\begin{theorem}\label{thm:criterion_01}
	Assume the notation introduced so far: There exists a parameter $b\in B$ such that the non-cooperative game $\Ecal^{(b)}$ from the family $\Ecal/B$ has a Nash equilibrium if and only if there exists a point $e\in \pi^{-1}(\{b\})$ such that $\Delta(e)\cap \widetilde{\Phi}(e)\neq\emptyset$, or in other words
	\begin{displaymath}
		\bigcup_{\epsilon\in E}\widetilde{\Phi} (\epsilon) \ \cap \ \bigcup_{\epsilon\in E} \Delta(\epsilon)\neq \emptyset.
	\end{displaymath}
\end{theorem}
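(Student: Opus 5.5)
The argument is a direct unwinding of definitions, reducing the statement to Theorem~\ref{thm:FiberwiseBestReplyCondition}. By that theorem, some game $\Ecal^{(b)}$ has a Nash equilibrium if and only if $\Phi\colon E\longrightarrow 2^E$ has a fixed point. It therefore suffices to show that $e\in\Phi(e)$ holds exactly when $\Delta(e)\cap\widetilde{\Phi}(e)\neq\emptyset$. We also need this to match the union formulation in the display.

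First, fix $e\in E$ with $\pi(e)=b$. By definition, $\widetilde{\Phi}(e)=\{e\}\times\{e\}\times\Phi(e)$ and $\Delta(e)=\{(e,e,e)\}$. Hence $\Delta(e)\cap\widetilde{\Phi}(e)\neq\emptyset$ if and only if $(e,e,e)\in\{e\}\times\{e\}\times\Phi(e)$, that is, if and only if $e\in\Phi(e)=\Phi^{(b)}(e)$.

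We should check that these points genuinely lie in $\pi^*(E\times_B E)$. This uses the fiberwise property \eqref{eq:commutative_diagram}: every $e'\in\Phi(e)$ satisfies $\pi(e')=\pi(e)$. So the triple $(e,(e,e'))$ satisfies the pullback condition $\pi(e)=(\pi\times_B\pi)(e,e')$, and $\widetilde{\Phi}$ is well defined. With this, the first formulation follows: existence of $b$ and $e\in\pi^{-1}(\{b\})$ with nonempty intersection is equivalent to $\Phi$ having a fixed point in the fiber over $b$. By Theorem~\ref{thm:NashBestReplyCondition} applied to $\Ecal^{(b)}$, such a fixed point is exactly a Nash equilibrium of $\Ecal^{(b)}$.

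Second, for the union formulation, note that $\widetilde{\Phi}(\epsilon)$ and $\Delta(\epsilon)$ both lie in the fiber $\mu^{-1}(\{\epsilon\})$, since both are set-valued sections. The fibers of $\mu$ over distinct points are disjoint. So a common point of $\bigcup_\epsilon\widetilde{\Phi}(\epsilon)$ and $\bigcup_\epsilon\Delta(\epsilon)$ must come from the same $\epsilon$. Concretely, a point $(\epsilon,\epsilon,\epsilon)$ in both unions has first coordinate $\epsilon$, which forces it into $\widetilde{\Phi}(\epsilon)$. Thus the unions intersect if and only if $\Delta(e)\cap\widetilde{\Phi}(e)\neq\emptyset$ for some $e$, and taking $b=\pi(e)$ closes the loop.

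The only mildly delicate point is this last one: the union formulation must not create spurious intersections between sections over different base points. The disjointness of the fibers of $\mu$ settles it. Everything else is bookkeeping.
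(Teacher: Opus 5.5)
Your proposal is correct and follows the paper's proof. You reduce via Theorem~\ref{thm:FiberwiseBestReplyCondition} to a fixed point of $\Phi$, then unwind $\Delta(e)=\{(e,e,e)\}$ and $\widetilde{\Phi}(e)=\{e\}\times\{e\}\times\Phi(e)$, and compare first coordinates to rule out spurious intersections in the union formulation; this is the paper's step $(e,e,e)\in\{e'\}\times\{e'\}\times\Phi(e')\Rightarrow e=e'$. Your extra checks, that $\widetilde{\Phi}$ is well defined via the fiberwise property and that the same $b$ appears on both sides, are harmless refinements.
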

\begin{proof}
	According to Theorem \ref{thm:FiberwiseBestReplyCondition}: there exists a parameter $b\in B$ such that the non-cooperative game $\Ecal^{(b)}$ from the family $\Ecal/B$ has a Nash equilibrium if and only if the best-reply correspondence $\Phi \colon E \longrightarrow 2^{E}$ of $\Ecal/B$  has a fixed point.
	In other words, there exists a point $e\in E$ such that $e\in \Phi(e)$ and so $(e,e,e)\in \Delta(e)\cap \widetilde{\Phi}(e)$.

	\smallskip\noindent
	Conversely, if $(w,u,v)\in \Delta(e)\cap \widetilde{\Phi}(e)$ then $(w,u,v)=(e,e,e)$ for some $e\in E$ and additionally  $(w,u,v)\in   \{e'\}\times\{e'\}\times  \Phi(e')$ for some $e'\in E$.
	Consequently, $(e,e,e)\in \{e'\}\times\{e'\}\times  \Phi(e')$, implying that $e=e'$ and $e\in \Phi(e)$, which means that the best-reply correspondence $\Phi$ has a fixed point.
	This completes the proof.
\end{proof}

\medskip
The final criterion is a direct consequence of the previous one and the use of the fiberwise map $\Pi\colon \pi^*(E\times_B E)\longrightarrow E\times_B E$ from the pull-back diagram \eqref{eq:pull-back}.

\begin{corollary}\label{cor:criterion_02}
	Assume the notation introduced so far: There exists a parameter $b\in B$ such that the non-cooperative game $\Ecal^{(b)}$ from the family $\Ecal/B$ has a Nash equilibrium if and only if
	\begin{displaymath}
		\Pi\big(\bigcup_{\epsilon\in E}\widetilde{\Phi} (\epsilon) \big) \ \cap \ \Pi\big(\bigcup_{\epsilon\in E} \Delta(\epsilon)\big)\neq \emptyset.
	\end{displaymath}
\end{corollary}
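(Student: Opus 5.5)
The plan is to deduce the corollary from Theorem~\ref{thm:criterion_01} by transporting the intersection condition along $\Pi$. The key point is that $\Pi$ is injective on the union of the two images, even though it is not injective on all of $\pi^*(E\times_B E)$. By the definition of the pullback, $\Pi(e,(e',e''))=(e',e'')$.

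First I would compute the images explicitly. For $\widetilde{\Phi}(\epsilon)=\{\epsilon\}\times\{\epsilon\}\times\Phi(\epsilon)$ the image is
\[
\Pi\big(\widetilde{\Phi}(\epsilon)\big)=\{\epsilon\}\times\Phi(\epsilon)\subseteq E\times_B E,
\]
which lies in $E\times_B E$ because $\Phi$ is fiberwise, as in \eqref{eq:commutative_diagram}. Taking the union over $\epsilon\in E$ gives $\Pi\big(\bigcup_{\epsilon}\widetilde{\Phi}(\epsilon)\big)=\operatorname{Graph}(\Phi)$. Likewise $\Pi(\delta(\epsilon))=(\epsilon,\epsilon)$, so $\Pi\big(\bigcup_\epsilon\Delta(\epsilon)\big)$ is the diagonal $\{(\epsilon,\epsilon):\epsilon\in E\}$.

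With these images the condition in the corollary reads $\operatorname{Graph}(\Phi)\cap\operatorname{diag}\neq\emptyset$. This holds exactly when some $e\in E$ satisfies $e\in\Phi(e)$, that is, when $\Phi$ has a fixed point. Theorem~\ref{thm:FiberwiseBestReplyCondition} then says this is equivalent to some $\Ecal^{(b)}$ having a Nash equilibrium.

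Alternatively, one can argue through Theorem~\ref{thm:criterion_01}. The forward direction is immediate, since a common point of the two unions maps under $\Pi$ to a common point of the two images. For the converse, take $(u,v)$ in both images. Then $(u,v)=(\epsilon,\epsilon)$ and also $v\in\Phi(u)$, so $u\in\Phi(u)$, and therefore $(u,u,u)$ lies in both unions. The only care needed is this recovery of a preimage point, which is exactly why the explicit computation of the images is the main step. There is no real obstacle beyond bookkeeping of the pullback coordinates.
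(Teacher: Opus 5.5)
Your proof is correct and is essentially the paper's argument. The paper also deduces the corollary from Theorem~\ref{thm:criterion_01} by noting that a common point of the two images has the form $(e,e)$ with $(e,e)\in\{e'''\}\times\Phi(e''')$, which forces $e\in\Phi(e)$; your identification of the images as $\operatorname{Graph}(\Phi)$ and the diagonal of $E\times_B E$ simply makes this bookkeeping explicit.
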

\begin{proof}
	The proof follows directly from the previous theorem and implications: If $(e',e'')\in \Pi\big(\bigcup_{\epsilon\in E} \Delta(\epsilon)\big)$ then $e'=e''=:e$, and if $(e,e)\in\{e'''\}\times \Phi(e''')$ then $e=e'''$ and $e\in \Phi(e)$.
\end{proof}

\medskip
Note that the set $\Pi\big(\bigcup_{\epsilon\in E}\widetilde{\Phi} (\epsilon) \big)$ consists of all graphs of best-reply correspondences of games $\Ecal^{(b)}$ gathered together into a subset of the fiber product $E\times_B E$.

\medskip
The criterion for the existence of Nash equilibrium we just obtained has a significant advantage for us, compared to the other ones.
It provides us the possibility to use the following intersection lemma which is assumed to be folklore.
For the precise statement and complete proof of the lemma see for example \cite[Lem.\,4.3]{Blagojevic2011}.
Unless otherwise stated, we assume an appropriate (co)homology theory, for example, the singular (co)homology theory with coefficients in $\mathbb{F}_p$, the finite field with $p$ elements.

\begin{lemma}\label{lem:intersection_lemma}
	Let $p$ be a prime, $k\geq 1$ an integer and let $B$ be a compact $\mathbb{F}_p$-orientable manifold.
	Suppose that $V\overset{p}{\longrightarrow} B$ is a real vector bundle over $B$ with the property that the $k$-th power of its  $\mathbb{F}_p$-Euler class does not vanish, that is $e(V)^k\neq 0$ in $H^*(B;\mathbb{F}_p)$.

	\smallskip\noindent
	Let $T_0,\dots, T_k$ be subsets of the total space of the vector bundle  $V$ with property that all homomorphisms
	\[
		\xymatrix{H^{\dim(B)}(B;\mathbb{F}_p)\ar[r]^-{(p|_{T_i})^*} & H^{\dim(B)}(T_i;\mathbb{F}_p)}
	\]
	induced by the restrictions of the projection map $p|_{T_i}$ are injective, where $0\leq i\leq k$.
	Then
	\[
		T_0\cap \dots \cap T_k\neq \emptyset.
	\]
\end{lemma}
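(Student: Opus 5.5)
This is the classical cup-length argument, dualising each $T_i$ into a cohomology class on the ambient space $V$ and multiplying them. Put $n=\dim B$ and $r=\operatorname{rank}V$, and work with $\mathbb{F}_p$-coefficients throughout. The total space $V$ is $\mathbb{F}_p$-orientable, because $B$ is and the bundle carries a Thom class over $\mathbb{F}_p$ once $e(V)$ is defined. We argue by contradiction and suppose $T_0\cap\dots\cap T_k=\emptyset$.

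First we produce a class attached to each $T_i$. Let $S\subseteq V$ be the zero section, identified with $B$. By the Thom isomorphism, $H^r(V,V\setminus S)$ is generated by the Thom class $u$, and $u$ restricts to $e(V)$ in $H^r(V)\cong H^r(B)$. Translating the zero section fibrewise, if necessary after a homotopy through sections, we get classes $u_i\in H^r(V,V\setminus S_i)$ for sections $S_i$. Each $u_i$ restricts to $p^*e(V)$ in $H^r(V)$.

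The injectivity hypothesis is what links $T_i$ to these classes. By Poincaré–Lefschetz duality on $V$, the fundamental class of $B$ pulls back nontrivially to $T_i$ in degree $n$. This should force $T_i$ to meet every section carrying the Euler class. Dually, it means the class $p^*e(V)$ cannot vanish when restricted to the complement $V\setminus T_i$.

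The cleaner route, and the one I would pursue, is to define $\alpha_i\in H^r(V,V\setminus T_i)$ as the image of $u$ under the maps forced by the hypothesis, with $\alpha_i\mapsto p^*e(V)$ in $H^r(V)$. The relative cup product then gives
\[
\alpha_0\smile\dots\smile\alpha_k\in H^{(k+1)r}\bigl(V,\textstyle\bigcup_i (V\setminus T_i)\bigr)=H^{(k+1)r}(V,V)=0
\]
if the intersection is empty. Its image in $H^*(V)\cong H^*(B)$ is $e(V)^{k+1}$.

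That only contradicts the hypothesis $e(V)^k\neq 0$ after one more step. We must use a top-degree statement for the last set $T_k$, that is, the injectivity on $H^{n}$, in place of an $r$-dimensional class. So I would pair $e(V)^k$, which is nonzero, with the $n$-dimensional restriction to $T_k$. Poincaré duality on $B$ provides a class $c$ with $e(V)^k\smile c=[B]^*\neq 0$ in $H^n(B)$. Pulling this back to $T_k$ keeps it nonzero by the injectivity hypothesis. Meanwhile the factor $\alpha_0\smile\dots\smile\alpha_{k-1}$ lives relative to $V\setminus(T_0\cap\dots\cap T_{k-1})\supseteq T_k$, so its restriction to $T_k$ vanishes. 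This gives the contradiction.

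The main obstacle is justifying the construction of the $\alpha_i$ for arbitrary subsets $T_i$, which need not be closed or nice. I would first reduce to closed neighbourhoods, or use Čech or Alexander–Spanier cohomology, together with the Lefschetz duality step. This is the reason the paper defers to \cite[Lem.\,4.3]{Blagojevic2011} for details.
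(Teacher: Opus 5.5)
Your endgame is correct, and it is what gives the sharp exponent $k$ instead of $k+1$. Suppose that for $i<k$ there are classes $\alpha_i\in H^r(V,V\setminus T_i)$ mapping to $p^*e(V)$, with $V\setminus T_i$ open. Then $\alpha_0\smile\cdots\smile\alpha_{k-1}\in H^{kr}(V,V\setminus\bigcap_{i<k}T_i)$, so an empty $(k+1)$-fold intersection forces $(p|_{T_k})^*e(V)^k=0$. Multiplying by a Poincar\'e-dual class $c$ then contradicts injectivity on $H^n(T_k)$. The paper gives no argument of its own, only the citation, and this relative cup-product scheme is the standard route.

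The gap is the existence of the $\alpha_i$. This is the only place the hypothesis on $T_0,\dots,T_{k-1}$ is used, and the difficulty is not a point-set issue. You need $p^*e(V)$ to \emph{vanish} on $V\setminus T_i$. Your sentence that it ``cannot vanish'' there asserts the opposite. Your phrase ``the image of $u$ under the maps forced by the hypothesis'' names no actual map, since $H^r(V,V\setminus S)\to H^r(V,V\setminus T_i)$ is induced only when $S\subseteq T_i$. The translated sections $S_i$ and classes $u_i$ never enter the argument.

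Here is how to supply the missing step.
\begin{itemize}
\item Over $\mathbb{F}_p$, injectivity of $(p|_{T_i})^*$ on $H^n$ is equivalent to surjectivity of $(p|_{T_i})_*\colon H_n(T_i)\to H_n(B)$. So finitely many singular $n$-cycles in $T_i$ have images spanning $H_n(B)$.
\item For $i<k$, replace $T_i$ by the compact union $K_i$ of their supports. This keeps the hypothesis and can only shrink the intersection, since $T_k\cap\bigcap_{i<k}K_i\subseteq\bigcap_i T_i$. It also makes $V\setminus K_i$ open, so the relative cup product is defined. Thickening to neighbourhoods instead would go the wrong way.
\item Choose a closed disk bundle $D\supseteq K_i$. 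The restriction $H^n(B)\cong\check H^n(D)\to\check H^n(K_i)$ is injective, because its composite with $\check H^n(K_i)\to H^n(K_i)$ is the injective singular restriction.
\item Poincar\'e--Alexander--Lefschetz duality $\check H^n(K)\cong H_r(V,V\setminus K)$ holds in the $\mathbb{F}_p$-orientable $(n+r)$-manifold $V$, naturally in compact $K$. It converts the previous injectivity into injectivity of $H_r(V,V\setminus D)\to H_r(V,V\setminus K_i)$.
\item Dualizing over the field, $H^r(V,V\setminus K_i)\to H^r(V,V\setminus D)$ is onto. So the Thom class, viewed in $H^r(V,V\setminus D)$, lifts to some $\alpha_i\in H^r(V,V\setminus K_i)$, whose image in $H^r(V)$ is $p^*e(V)$.
\end{itemize}
With these $\alpha_i$, your last step goes through verbatim.
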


\medskip
Combining the criterion from Theorem \ref{thm:criterion_01} and Lemma \ref{lem:intersection_lemma} we prove the first ever result on the existence of Nash equilibria for families of games.

\begin{theorem}\label{thm:Main_result_01}
	Let $p$ be a prime, $N\geq 1$ an integer, and let $\Ecal/B:=(\Ecal^{(b)}:b\in B)$ be a family of $N$-player non-cooperative games with the following properties
	\begin{compactitem}[\quad --]
		\item $B$ is a compact  $\mathbb{F}_p$-orientable manifold,
		\item $E_i$ is a fiber bundle over $B$ with compact total space $E_i$, which is a subbundle of a finite dimensional (real) vector bundle $V_i$ over $B$ for all $1\leq i\leq N$,
		\item $E:=E_1\times_B\cdots \times_BE_N$ is the fiber product bundle over $B$ which is also a subbundle of the fiber product of vector bundle $V:= V_1\times_B\cdots\times_B V_N\cong V_1\oplus\cdots\oplus V_N$,
		\item the best-reply correspondence $\Phi\colon E\longrightarrow 2^E$ of $\Ecal/B$ is compact-valued,
		\item the homomorphism in cohomology, induced by the projection map $\pi\times_B\pi\colon E\times_B E\longrightarrow B$,
		\[
			\xymatrix{
			H^{\dim(B)}(B;\mathbb{F}_p)\ar[rr]^-{(\pi\times_B\pi)^*} & & 	H^{\dim(B)}\big(\Pi\big(\bigcup_{e\in E}\widetilde{\Phi} (e) \big) ;\mathbb{F}_p\big)
			}
		\]
		is injective.
	\end{compactitem}

	\smallskip\noindent
	If the monomial $e(V)^2=e(V_1)^2\cdots e(V_N)^2$ in Euler classes of vector bundles $V_1,\dots, V_N$ does not vanish in the cohomology $H^*(B;\mathbb{F}_p)$, then there exists a parameter $b\in B$ such that the non-cooperative game $\Ecal^{(b)}$ from the family $\Ecal/B$ has a Nash equilibrium.
\end{theorem}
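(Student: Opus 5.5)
We apply Lemma~\ref{lem:intersection_lemma} with $k=1$ inside the vector bundle $W:=V\times_B V\cong V\oplus V$ over $B$, which contains $E\times_B E$. Two subsets of its total space are needed. We take
\[
T_0:=\Pi\Big(\bigcup_{e\in E}\widetilde{\Phi}(e)\Big)=\bigcup_{e\in E}\{e\}\times\Phi(e), \qquad
T_1:=\Pi\Big(\bigcup_{e\in E}\Delta(e)\Big)=\{(e,e): e\in E\}.
\]
Once we know $T_0\cap T_1\neq\emptyset$, Corollary~\ref{cor:criterion_02} gives a parameter $b\in B$ such that $\Ecal^{(b)}$ has a Nash equilibrium. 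The remaining work is checking the hypotheses of the intersection lemma.

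First, the Euler class. Since $W\cong V\oplus V$ and the $\mathbb{F}_p$-Euler class is multiplicative under Whitney sum, $e(W)=e(V)^2$. Multiplicativity for $V=V_1\oplus\cdots\oplus V_N$ then gives $e(V)^2=e(V_1)^2\cdots e(V_N)^2$, up to sign and reordering, which are harmless over $\mathbb{F}_p$ since the product is nonzero in either case. The hypothesis therefore says exactly that $e(W)^1\neq 0$, which is the case $k=1$ of the lemma.

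Second, injectivity of the induced maps in top degree. For $T_0$, the restriction of the bundle projection $W\to B$ to $T_0$ is $(\pi\times_B\pi)|_{T_0}$, and its injectivity on $H^{\dim(B)}$ is assumed directly. For $T_1$, we use that the projection restricted to the diagonal $T_1\cong E$ factors as $\pi\colon E\to B$. Here there is a gap: the fiber $E^{(b)}$ is only assumed compact, so $H^{\dim B}(B)\to H^{\dim B}(E)$ need not be injective.

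This is the step I expect to be the main obstacle. My first attempt would be to argue that each fiber $E_i^{(b)}$ is a nonempty subset of the vector space fiber. If the fibers are contractible, for instance convex as in the later corollaries, then $\pi$ is a fiber bundle with contractible fibers, hence a homotopy equivalence onto $B$, and $\pi^*$ is an isomorphism. If contractibility is not available, I would fall back on one of two options. One is to replace $T_1$ by the diagonal of the whole vector bundle, $\{(v,v):v\in V\}$. This set still meets $T_0$ only at points of the form $(e,e)$, and it is a vector bundle over $B$, so its projection induces an isomorphism in cohomology. The other is to use the zero section $s\colon B\to W$ via a translation trick.

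The fallback via the $V$-diagonal is clean, so that is the choice I would commit to. Let $T_1':=\{(v,v): v\in V\}\subseteq W$, which is homotopy equivalent to $B$, so injectivity is automatic. Since $T_0\subseteq E\times_B E$, any point of $T_0\cap T_1'$ has the form $(e,e)$ with $e\in\Phi(e)$. Compactness of $E$ and the compact-valuedness of $\Phi$ are used only to make $T_0$ a reasonable (compact) subspace, as the precise version of the lemma in \cite{Blagojevic2011} requires. The proof then concludes by Lemma~\ref{lem:intersection_lemma} and Theorem~\ref{thm:FiberwiseBestReplyCondition}.
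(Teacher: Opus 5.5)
Your proposal is correct and matches the paper's proof: the paper also applies Lemma~\ref{lem:intersection_lemma} with $k=1$ to the bundle $V\times_B V$, whose Euler class is $e(V)^2$, with $T_0=\Pi\big(\bigcup_{\epsilon\in E}\widetilde{\Phi}(\epsilon)\big)$ and with $T_1$ built from the diagonal of $V$ rather than of $E$, precisely so that $H^*(B;\mathbb{F}_p)\to H^*(T_1;\mathbb{F}_p)$ is an isomorphism. The one refinement the paper makes is to intersect that diagonal with a disk bundle of $V\times_B V$ containing $T_0$ in its interior (possible by compactness of $E$ and $B$), so that $T_1$ is compact as the precise lemma requires, without changing $T_0\cap T_1$ or the cohomology; your $T_1'=\{(v,v):v\in V\}$ is non-compact, so you should add this truncation.
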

\begin{proof}
	The result follows from Corollary \ref{cor:criterion_02} and Lemma \ref{lem:intersection_lemma} by taking
	\begin{compactitem}[\quad --]
		\item  $T_0$ to be the subspace $\Pi\big(\bigcup_{\epsilon\in E}\widetilde{\Phi} (\epsilon) \big)$ of $V\times_N V$, and
		\item  $T_1$ to be the intersection of the disk bundle $D(V\times_N V)$ and the total bundle of the diagonal vector subbundle of $V\times_N V$.
	\end{compactitem}
	Here the disk bundle is chosen in such a way that it contains the set $\Pi\big(\bigcup_{\epsilon\in E}\widetilde{\Phi} (\epsilon) \big)$ in its interior.
	This is possible due to the assumed compactness of $E_1,\dots,E_N$ and $B$.
	In this way we have achieved that $T_1$ is compact and that
	\[
		\xymatrix{
			H^{*}(B;\mathbb{F}_p) \longrightarrow H^{*}(T_1;\mathbb{F}_p)
		}
	\]
	is an isomorphism (in all degrees).
\end{proof}

\medskip
Note that an assumption of $\mathbb{F}_p$-orientability of vector bundles $V_1,\dots,V_N$ is not necessary, because we are considering the square of the Euler class, or more precisely, the Euler class of the Whitney $V\oplus V$ which, in particular, is always orientable.

\medskip
A highlight of our theorem is that the assumptions on the individual games are rather permissive, especially compared to more classical assumptions in game theory.

\section{Multilateral Nash equilibria for families of non-cooperative games}\label{sec:multilateral_nash_equilibria_family}

The generalizations for a non-cooperative game which we have introduced so far can easily be extended to abstract economies and repeated games, and additionally, they can be combined seamlessly.
Therefore, in this section, we briefly demonstrate how the notions defined in Section~\ref{sec:multilateral_nash_equilibria} carry over to families of games and then we prove an existence result for multilateral Nash equilibria.

\medskip
Let $N \geq 1$ and $1\leq k\leq N$ be (fixed) integers, and let $\Ecal/B:=(\Ecal^{(b)}:b\in B)$ be a family of non-cooperative games over $B$.
For a subset $I := \{i_1,\dots,i_k\}\in\binom{[N]}{k}$ we still use the previously introduced notation of $E_I$, but this time to refer to the fiber product
\begin{displaymath}
	E_I := E_{i_1}\times_B \cdots \times_B E_{i_k} :=  \bigsqcup_{b\in B} E_{i_1}^{(b)} \times\cdots\times E_{i_k}^{(b)} = \bigsqcup_{b\in B} E_I^{(b)}.
\end{displaymath}

\medskip
The definitions of the multilateral marginal function and the multilateral best-reply correspondence, now for the family of games $\Ecal/B$, exactly mirror their respective definitions in Section~\ref{sec:MultipleCoincidences}, Definition~\ref{def:MultilateralMarginal} and Definition~\ref{def:MultilateralBestReplyCorrespondence}.
For completeness' sake, we state them now in the context of families of games.

\medskip
The marginal function of the family of games $\Ecal/B$, associated to the subset $I\in\binom{[N]}{k}$, is defined to be
\begin{align*}
	\phi_I^{(b)}\colon E^{(b)} = E_1^{(b)}\times\dots\times E_N^{(b)} & \longrightarrow (\mathbb{R}\cup\{\infty\})^I  ,                               \\
	x = (x_1,\dots,x_N)                                               & \longmapsto (\sup\{\theta_i^{(b)}(y_I,x_{-I}) : y_I\in E_I^{(b)}\})_{i\in I}.
\end{align*}
Once again we use the notation $E_I^{(b)}:=\prod_{i\in I}E_i^{(b)}$.
Accordingly, the $k$-lateral best-reply correspondences of $\Ecal/B$, associated to the subset $I\in\binom{[N]}{k}$, are given by
\begin{align*}
	\Phi_I^{(b)}\colon E^{(b)} =  E_1^{(b)}\times\dots\times E_N^{(b)} & \longrightarrow 2^{E_I^{(b)}} ,                                                                                            & \\
	x = (x_1,\dots,x_N)                                                & \longmapsto \big\{y_I\in E_I^{(b)} : (\forall i\in I) \ \theta_i^{(b)}(y_I, x_{-I}) = \phi_I^{(b)}(x_1,\dots,x_N)_i\big\}. &
\end{align*}
As before, we can combine the set-valued maps $\Phi_I$ into a fiberwise map:
\begin{align*}
	\Phi_I\colon E: =  E_1\times\dots\times E_N & \longrightarrow 2^{E_I} , & \\
	x = (x_1,\dots,x_N)                         & \longmapsto \Phi_I^b,
\end{align*}
for $x\in E^{(b)}\subseteq \bigsqcup_{b\in B} E^{(b)} = E$.

\medskip
The multilateral best-reply correspondences of the family $\Ecal/B$, by construction, are also fiberwise, just like the previously constructed best-reply correspondence $\Phi$ of the family $\Ecal/B$, see \eqref{eq:def_best_reply_familes}.
In other words, for every $I\in\binom{[N]}{k}$, the following diagram commutes.
\begin{displaymath}
	\xymatrix{
	E\ar[rr]^-{\Phi_I}\ar[dr]_{\widehat{\pi}} & & 2^{E_I}\ar[dl]^{\widetilde{\pi_I}}\\
	& 2^B &}
\end{displaymath}
where, as before, $\widehat{\pi}\colon e\longmapsto\{\pi(e)\}$ and we are slightly abusing notation to denote  $\widetilde{\pi}\colon U\longmapsto \pi(U)$ being the projection maps.

\medskip
The definition of modified best-reply correspondences from Theorem~\ref{thm:SimultaneousFixedPointCriterion} can be extended to families of games.
Indeed, for $I\in\binom{[N]}{k}$ we have
\begin{align*}
	\overline{\Phi}_I^{(b)}\colon E^{(b)} & \longrightarrow 2^{E^{(b)}} ,                                                                                                                        & \\
	x = (x_1,\dots,x_N)                   & \longmapsto \overline{\Phi}_I^{(b)}(x):=\{(y_I, z_{-I})\in E^{(b)} \colon z_{-I}\in E^{(b)}_{-I}, \ \theta_I^{(b)}(y_I, x_{-I}) = \phi_I^{(b)}(x)\}, &
\end{align*}
and so we can define
\begin{displaymath}
	\overline{\Phi}_I\colon E\longrightarrow 2^{E}, \qquad x \longmapsto \overline{\Phi}_I^{(b)}(x),
\end{displaymath}
for $x\in E^{(b)} \subseteq \bigsqcup_{b\in B}E^{(b)} = E$.
Now we get an analogue of Theorem~\ref{thm:SimultaneousFixedPointCriterion}, the following existence criterion.

\begin{theorem}\label{thm:FamilyOfGamesSimultaneousFixedPointCriterion}
	Let $\Ecal/B:=(\Ecal^{(b)}:b\in B)$ be a family of non-cooperative games over $B$.
	Then there exists a parameter $b\in B$ such that $\Ecal^{(b)}$ has a $k$-lateral Nash equilibrium if and only if the modified best-reply correspondences $\overline{\Phi}_I$ for $I\in\binom{[N]}{k}$ admit a simultaneous fixed point, that is $x\in E$ such that $x\in\overline{\Phi}_I(x)$ for all $I\in\binom{[N]}{k}$.
\end{theorem}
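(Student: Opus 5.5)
The plan is to reduce the statement fiberwise to Theorem~\ref{thm:SimultaneousFixedPointCriterion}, in the same way that Theorem~\ref{thm:FiberwiseBestReplyCondition} was reduced to Theorem~\ref{thm:NashBestReplyCondition}. The one structural fact needed is that each $\overline{\Phi}_I$ is fiberwise. By construction, for $x\in E^{(b)}$ we have $\overline{\Phi}_I(x)=\overline{\Phi}_I^{(b)}(x)\subseteq E^{(b)}$, so $\pi(\overline{\Phi}_I(x))\subseteq\{\pi(x)\}$. This is the analogue of the commutative diagram \eqref{eq:commutative_diagram}, and it follows immediately from the definition.

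For the forward direction, suppose $\Ecal^{(b)}$ has a $k$-lateral Nash equilibrium $x\in E^{(b)}$. Applying Theorem~\ref{thm:SimultaneousFixedPointCriterion} to the single game $\Ecal^{(b)}$ gives $x\in\overline{\Phi}_I^{(b)}(x)$ for every $I\in\binom{[N]}{k}$. Since $x\in E^{(b)}\subseteq E$, the definition of $\overline{\Phi}_I$ gives $\overline{\Phi}_I(x)=\overline{\Phi}_I^{(b)}(x)$. Hence $x$ is a simultaneous fixed point of all the $\overline{\Phi}_I$.

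For the converse, let $x\in E$ satisfy $x\in\overline{\Phi}_I(x)$ for all $I$, and put $b:=\pi(x)$. Then $x\in E^{(b)}$, and by definition $\overline{\Phi}_I(x)=\overline{\Phi}_I^{(b)}(x)$, so $x$ is a simultaneous fixed point of the maps $\overline{\Phi}_I^{(b)}$ of the game $\Ecal^{(b)}$. By Theorem~\ref{thm:SimultaneousFixedPointCriterion}, read as an equivalence, $x$ is a $k$-lateral Nash equilibrium of $\Ecal^{(b)}$.

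The only delicate point is that Theorem~\ref{thm:SimultaneousFixedPointCriterion} is stated and proved in one direction only. I would therefore check the reverse implication directly. Suppose $x\in\overline{\Phi}_I(x)$ for every $I$. Writing $x=(x_I,x_{-I})$, the membership means $\theta_I(x_I,x_{-I})=\phi_I(x)$. Coordinatewise this says $\theta_i(x)=\sup\{\theta_i(y_I,x_{-I}) : y_I\in E_I\}$ for every $i\in I$. Since this holds for all $I\in\binom{[N]}{k}$, it is exactly Definition~\ref{def:multi_lateral_nash_equilibrium}. The steps of the earlier proof are all equivalences, so the reverse direction holds, and nothing else requires work.
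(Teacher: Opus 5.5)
Your proposal is correct and takes essentially the paper's route: the paper likewise notes that the $\overline{\Phi}_I$ are fiberwise by construction and applies Theorem~\ref{thm:SimultaneousFixedPointCriterion} on the fiber $E^{(\pi(x))}$. You are also right that the paper's proof of Theorem~\ref{thm:SimultaneousFixedPointCriterion} only argues one direction, and your direct check of the converse (from $x\in\overline{\Phi}_I(x)$, get $\theta_I(x_I,x_{-I})=\phi_I(x)$ for every $I$) is accurate and fills that gap.
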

\begin{proof}
	Observe that the modified best-reply correspondences are fiberwise set-valued maps by construction.
	The claim evidently follows from Theorem~\ref{thm:SimultaneousFixedPointCriterion}.
\end{proof}

\medskip
The corollary of Theorem~\ref{thm:SimultaneousFixedPointCriterion} also carries over to families of games.

\begin{corollary}\label{cor:FamilyOfGamesSimultaneousCoincidenceWithDiagonal}
	Assume the notation introduced so far: A $k$-lateral Nash equilibrium of $\mathcal{E}/B$ exists if and only if
	\begin{displaymath}
		\Delta(E\times_B E)\cap \bigcap_{I\in \binom{[N]}{k}} \operatorname{Graph}(\overline{\Phi}_I) \neq \emptyset.
	\end{displaymath}
	Here $\Delta(E\times E):=\{(e,e)\colon e\in E\}$ denotes the diagonal in $E\times E$.
\end{corollary}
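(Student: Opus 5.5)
The plan is to reduce the statement to the fiberwise simultaneous fixed-point criterion, Theorem~\ref{thm:FamilyOfGamesSimultaneousFixedPointCriterion}, in exactly the way Corollary~\ref{cor:SimultaneousCoincidenceWithDiagonal} was obtained from Theorem~\ref{thm:SimultaneousFixedPointCriterion}. The phrase ``a $k$-lateral Nash equilibrium of $\mathcal{E}/B$ exists'' will be read as: there is a parameter $b\in B$ such that $\mathcal{E}^{(b)}$ has a $k$-lateral Nash equilibrium. I will also interpret $\operatorname{Graph}(\overline{\Phi}_I)$ as the subset $\{(x,y)\colon y\in\overline{\Phi}_I(x)\}$ of $E\times E$. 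Because each $\overline{\Phi}_I$ is fiberwise, this graph in fact lies in $E\times_B E$, so it is legitimate to intersect it with the diagonal $\Delta(E\times_B E)=\{(e,e)\colon e\in E\}$.

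For the forward direction, suppose $\mathcal{E}^{(b)}$ has a $k$-lateral Nash equilibrium $x\in E^{(b)}\subseteq E$. Theorem~\ref{thm:FamilyOfGamesSimultaneousFixedPointCriterion}, or directly Theorem~\ref{thm:SimultaneousFixedPointCriterion} applied to the single game $\mathcal{E}^{(b)}$, gives $x\in\overline{\Phi}_I^{(b)}(x)=\overline{\Phi}_I(x)$ for every $I\in\binom{[N]}{k}$. Hence $(x,x)\in\operatorname{Graph}(\overline{\Phi}_I)$ for all such $I$. Since $(x,x)$ also lies in the diagonal, the intersection in the statement is nonempty.

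For the converse, take a point $(e,e')$ in the intersection. Membership in the diagonal forces $e=e'$. Membership in every graph then gives $e\in\overline{\Phi}_I(e)$ for all $I\in\binom{[N]}{k}$, so $e$ is a simultaneous fixed point. Theorem~\ref{thm:FamilyOfGamesSimultaneousFixedPointCriterion} then yields $b=\pi(e)$ such that $\mathcal{E}^{(b)}$ has a $k$-lateral Nash equilibrium, namely $e$ itself.

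There is no real obstacle here; the argument is pure bookkeeping. The only point that needs care is checking that the diagonal, the fiber product and the graphs are compared inside the same ambient space. This comes down to confirming that $\overline{\Phi}_I(x)\subseteq\pi^{-1}(\{\pi(x)\})$, which holds by construction, since $\overline{\Phi}_I(x)=\overline{\Phi}_I^{(b)}(x)\subseteq E^{(b)}$ for $x\in E^{(b)}$.
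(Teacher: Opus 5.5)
Your proposal is correct and follows essentially the same route as the paper: you use fiberwiseness to place each $\operatorname{Graph}(\overline{\Phi}_I)$ inside $E\times_B E$, then read a point of the intersection with the diagonal as a simultaneous fixed point, which Theorem~\ref{thm:FamilyOfGamesSimultaneousFixedPointCriterion} turns into an equilibrium of $\mathcal{E}^{(\pi(e))}$. Unlike the paper, which argues only the converse and leaves the forward direction implicit, you write out both directions.
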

\begin{proof}
	We only argue the subtle difference to the proof of Corollary~\ref{cor:SimultaneousCoincidenceWithDiagonal}, since we are taking the fiberwise product of $E\times_B E$ in place of the product.
	The set-valued map $\overline{\Phi_I}\colon E\longrightarrow 2^E$ is fiberwise and so $\operatorname{Graph}(\overline{\Phi}_I)\subseteq E\times_B E$.
	The claim now follows from an observation that, by definition,
	\[
		\Delta(E\times_B E) = \bigsqcup_{b\in B}\Delta(E^{(b)}\times E^{(b)})
		\qquad\text{and}\qquad
		\operatorname{Graph}(\overline{\Phi}_I)  = \bigsqcup_{b\in B}\operatorname{Graph}(\overline{\Phi}_I^{(b)}).
	\]
	If $x=(e_1,e_2)\in \Delta(E\times_B E)\cap \bigcap_{I\in \binom{[N]}{k}} \operatorname{Graph}(\overline{\Phi}_I)$ then $e_1=e_2=:e$ for $e\in E^{(b)}$ where $b\in B$ is fixed.
	Hence, $e\in \Phi_I(e)$ is a fixed point of $\Phi_I$, by definition of the graph of a set-valued map, for every $I\in\binom{[N]}{k}$.
	This concludes the proof of the theorem.
\end{proof}

\medskip
Like in Section \ref{sec:familes_of_games} assume that $\Ecal/B:=(\Ecal^{(b)}:b\in B)$ is a family of games with the following properties
\begin{compactitem}[\quad --]
	\item $E_i$ is a fiber bundle over $B$, which is a subbundle of a finite dimensional (real) vector bundle $V_i$ over $B$ for all $1\leq i\leq N$,
	\item $E:=E_1\times_B\cdots \times_BE_N$ is the fiber product bundle over $B$ which is also a subbundle of the fiber product of vector bundle $V_1\times_B\cdots\times_B V_N\cong V_1\oplus\cdots\oplus V_N$,
	\item marginal functions $\phi_i \colon E \longrightarrow \mathbb{R}$ is continuous with respect to the inherited topology on $E$ for all $1\leq i\leq N$,
	\item best-reply correspondence $\Phi \colon E \longrightarrow 2^{E}$ is upper-semi continuous with respect to the topology on $E$.
\end{compactitem}

\medskip
In the footsteps of Section \ref{sec:familes_of_games} we derive further criterions for the existence of, now, multilateral Nash equilibria.
We begin by fixing a subset $I\in\binom{[N]}{k}$ and by considering again the pullback of the bundle $E\times_B E$ along the projection map $\pi\colon E\longrightarrow B$.
In other words there is a pullback square diagram
\begin{equation}\label{eq:pull-back-02}
	\xymatrix{
	\pi^*(E\times_B E)\ar[rr]^-{\Pi}\ar[d]^{\mu} & & E\times_B E\ar[d]^{\pi\times_B\pi}\\
	E\ar[rr]^-{\pi}& & B.
	}
\end{equation}
Since now the best-reply correspondence $\overline{\Phi}_I \colon E \longrightarrow 2^{E}$ is fiberwise it induces the set-valued section $\widetilde{\Phi}_I\colon E\longrightarrow 2^{\pi^*(E\times_B E)}$ of the bundle $\pi^*(E\times_B E)$ given by
\begin{displaymath}
	\widetilde{\Phi}_I (e):= \{e\}\times\{e\}\times  \overline{\Phi}_I(e) \ \subseteq \ \mu^{-1}(\{e\})  \ \subseteq \ \pi^*(E\times_B E).
\end{displaymath}
Like before, the fact that $\overline{\Phi}_I$ is a set-valued section means that $\mu\big(\overline{\Phi}_I (e)\big)=\{e\}$.

\medskip
Again the section $\delta\colon E\longrightarrow \pi^*(E\times_B E)$, $\delta(e)=(e,e,e)$, of the bundle $\pi^*(E\times_B E)$  induces the diagonal set-valued section $\Delta\colon E\longrightarrow 2^{\pi^*(E\times_B E)}$ given by $\Delta(e)=\{\delta(e)\}$.

\medskip
Now we get a criterion of the existence of multilateral Nash equilibria for families of games analogous to Theorem \ref{thm:criterion_01}.

\begin{theorem}\label{thm:criterion_011}
	Assume the notation introduced so far: There exists a parameter $b\in B$ such that the non-cooperative game $\Ecal^{(b)}$ from the family $\Ecal/B$ has a $k$-lateral Nash equilibrium if and only if there exists a point $e\in \pi^{-1}(\{b\})$ such that $\Delta(e)\cap \bigcap_{I\in\binom{[N]}{k}}\widetilde{\Phi}_I(e)\neq\emptyset$, or in other words
	\begin{displaymath}
		\bigcap_{I\in\binom{[N]}{k}}\big(\bigcup_{\epsilon\in E}\widetilde{\Phi}_I (\epsilon)\big) \ \cap \ \bigcup_{\epsilon\in E} \Delta(\epsilon)\neq \emptyset.
	\end{displaymath}
\end{theorem}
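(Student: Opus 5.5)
The plan is to reduce the statement to Theorem~\ref{thm:FamilyOfGamesSimultaneousFixedPointCriterion}, mirroring the proof of Theorem~\ref{thm:criterion_01}. That theorem already says that some $\Ecal^{(b)}$ has a $k$-lateral Nash equilibrium if and only if there is $e\in E$ with $e\in\overline{\Phi}_I(e)$ for every $I\in\binom{[N]}{k}$. So it suffices to show that such simultaneous fixed points correspond exactly to the intersection points in the statement. The first ``in other words'' reformulation should be checked separately.

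\emph{Forward direction.} Suppose $e\in E^{(b)}=\pi^{-1}(\{b\})$ satisfies $e\in\overline{\Phi}_I(e)$ for all $I$. Then $(e,e,e)\in\{e\}\times\{e\}\times\overline{\Phi}_I(e)=\widetilde{\Phi}_I(e)$ for every $I$. Also $(e,e,e)=\delta(e)\in\Delta(e)$. Hence
\[
\Delta(e)\cap\bigcap_{I}\widetilde{\Phi}_I(e)\neq\emptyset,
\]
and a fortiori the union-intersection set in the display is nonempty. For the converse at a single point $e$: if $(w,u,v)\in\Delta(e)\cap\bigcap_I\widetilde{\Phi}_I(e)$, then $w=u=v=e$ and $v\in\overline{\Phi}_I(e)$ for all $I$. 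So $e$ is a simultaneous fixed point, lying in $\pi^{-1}(\{b\})$ with $b=\pi(e)$.

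\emph{Reverse direction for the displayed set, where the only real care is needed.} The intersection $\bigcap_I\bigcup_{\epsilon}\widetilde{\Phi}_I(\epsilon)$ lets the witness $\epsilon$ depend on $I$ a priori. Take a point $z=(w,u,v)$ of the displayed intersection. Since $z\in\bigcup_\epsilon\Delta(\epsilon)$, we have $z=(e,e,e)$ for some $e\in E$. For each $I$ there is $\epsilon_I$ with $(e,e,e)\in\{\epsilon_I\}\times\{\epsilon_I\}\times\overline{\Phi}_I(\epsilon_I)$. The first coordinate forces $\epsilon_I=e$ for every $I$, so $e\in\overline{\Phi}_I(e)$ for all $I$ simultaneously. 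The fiber condition is automatic: $\overline{\Phi}_I$ is fiberwise, so $\overline{\Phi}_I(e)\subseteq\pi^{-1}(\{\pi(e)\})$, consistent with membership in $\pi^*(E\times_BE)$.

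Applying Theorem~\ref{thm:FamilyOfGamesSimultaneousFixedPointCriterion}, which rests on Theorem~\ref{thm:SimultaneousFixedPointCriterion} applied fiberwise, then yields $b=\pi(e)$ such that $\Ecal^{(b)}$ has a $k$-lateral equilibrium at $e$. I expect the only potential obstacle to be this bookkeeping of the $I$-dependent witnesses $\epsilon_I$. It is resolved by the fact that the first factor of $\pi^*(E\times_BE)$ records the base point, which is exactly why the sections are built as $\{e\}\times\{e\}\times\overline{\Phi}_I(e)$ rather than as graphs alone.
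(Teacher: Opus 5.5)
Your proposal is correct and follows the paper's proof. Like the paper, you reduce to Theorem~\ref{thm:FamilyOfGamesSimultaneousFixedPointCriterion}, send a simultaneous fixed point $e$ to $(e,e,e)$, and for the converse use the first coordinate of $\{\epsilon\}\times\{\epsilon\}\times\overline{\Phi}_I(\epsilon)$ to force the witness to equal $e$; your explicit handling of the a priori $I$-dependent witnesses $\epsilon_I$ is slightly more careful than the paper, which tacitly takes a single $e'$ for all $I$.
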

\begin{proof}
	According to Theorem \ref{thm:FamilyOfGamesSimultaneousFixedPointCriterion}: there exists a parameter $b\in B$ such that the non-cooperative game $\Ecal^{(b)}$ from the family $\Ecal/B$ has a $k$-lateral Nash equilibrium if and only if the modified best-reply correspondences $\overline{\Phi}_I$ for $I\in\binom{[N]}{k}$ admit a simultaneous fixed point, that is $e\in E$ such that $e\in\overline{\Phi}_I(e)$ for all $I\in\binom{[N]}{k}$.
	This means, there exists a point $e\in E$ such that $e\in \bigcap_{I\in\binom{[N]}{k}}\overline{\Phi}_I(e)$ and so $(e,e,e)\in \Delta(e)\cap \bigcap_{I\in\binom{[N]}{k}}\widetilde{\Phi}_I(e)$.

	\smallskip\noindent
	Conversely, if $(w,u,v)\in \Delta(e)\cap \bigcap_{I\in\binom{[N]}{k}}\widetilde{\Phi}_I(e)$ then $(w,u,v)=(e,e,e)$ for some $e\in E$ and additionally  $(w,u,v)\in   \{e'\}\times\{e'\}\times  \overline{\Phi}_I(e')$ for some $e'\in E$ and for all  $I\in\binom{[N]}{k}$.
	Consequently, $(e,e,e)\in \{e'\}\times\{e'\}\times   \overline{\Phi}_I(e')$ for all  $I\in\binom{[N]}{k}$, implying that $e=e'$ and $e\in  \overline{\Phi}_I(e)$ for all  $I\in\binom{[N]}{k}$.
	Thus, all modified best-reply correspondences $\widetilde{\Phi}_I$  have the common fixed point at $e\in E$.
	This completes the proof.
\end{proof}

\medskip
We  derive also an analogue of the criterion from Corollary \ref{cor:criterion_02} now in the context of multilateral Nash equilibrium.
Recall that we use the fiberwise map  $\Pi\colon \pi^*(E\times_B E)\longrightarrow E\times_B E$ from the pull-back diagram \eqref{eq:pull-back-02}.

\begin{corollary}\label{cor:criterion_021}
	Assume the notation introduced so far: There exists a parameter $b\in B$ such that the non-cooperative game $\Ecal^{(b)}$ from the family $\Ecal/B$ has a $k$-lateral Nash equilibrium if and only if
	\begin{displaymath}
		\bigcap_{I\in\binom{[N]}{k}}\Pi\big(\bigcup_{\epsilon\in E}\widetilde{\Phi}_I (\epsilon) \big) \ \cap \ \Pi\big(\bigcup_{\epsilon\in E} \Delta(\epsilon)\big)\neq \emptyset.
	\end{displaymath}
\end{corollary}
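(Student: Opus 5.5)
The plan is to deduce Corollary~\ref{cor:criterion_021} from Theorem~\ref{thm:criterion_011}, exactly as Corollary~\ref{cor:criterion_02} was obtained from Theorem~\ref{thm:criterion_01}, by pushing everything forward along the fiberwise map $\Pi\colon \pi^*(E\times_B E)\longrightarrow E\times_B E$, $(e,(e',e''))\longmapsto(e',e'')$. First I will identify the two kinds of sets concretely. By definition of $\widetilde{\Phi}_I$ and $\delta$,
\[
\Pi\big(\textstyle\bigcup_{\epsilon\in E}\widetilde{\Phi}_I(\epsilon)\big)=\{(e,f)\in E\times_B E : f\in\overline{\Phi}_I(e)\}=\operatorname{Graph}(\overline{\Phi}_I),
\]
and
\[
\Pi\big(\textstyle\bigcup_{\epsilon\in E}\Delta(\epsilon)\big)=\{(e,e): e\in E\}=\Delta(E\times_B E).
\]
Hence the condition in the corollary is literally the condition of Corollary~\ref{cor:FamilyOfGamesSimultaneousCoincidenceWithDiagonal}. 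One could stop there, but I will also argue directly to keep the parallel with Corollary~\ref{cor:criterion_02}.

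For the direction ``$\Rightarrow$'', suppose some $\Ecal^{(b)}$ has a $k$-lateral Nash equilibrium. Theorem~\ref{thm:criterion_011} then gives a point $e$ with $(e,e,e)\in\Delta(e)\cap\bigcap_I\widetilde{\Phi}_I(e)$. Applying $\Pi$ yields $(e,e)$, which lies in each $\Pi\big(\bigcup_\epsilon\widetilde{\Phi}_I(\epsilon)\big)$ and in $\Pi\big(\bigcup_\epsilon\Delta(\epsilon)\big)$. Since the image of an intersection lies in the intersection of the images, this direction is immediate.

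For the direction ``$\Leftarrow$'', I expect the only real subtlety, because $\Pi$ need not commute with intersections. The difficulty is resolved by the explicit descriptions above. Take $(e',e'')$ in the intersection. Membership in the diagonal image forces $e'=e''=:e$. For each $I$, membership in $\Pi\big(\bigcup_\epsilon\widetilde{\Phi}_I(\epsilon)\big)$ gives some $\epsilon_I$ with $(e,e)\in\{\epsilon_I\}\times\overline{\Phi}_I(\epsilon_I)$. The first coordinate forces $\epsilon_I=e$, independent of $I$, so $e\in\overline{\Phi}_I(e)$ for all $I\in\binom{[N]}{k}$.

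The key point is that the first coordinate of a point in $\operatorname{Graph}(\overline{\Phi}_I)$ records the base point of the correspondence. This is exactly what lets the witnesses for the different $I$ be the same point. By Theorem~\ref{thm:FamilyOfGamesSimultaneousFixedPointCriterion} (equivalently Theorem~\ref{thm:SimultaneousFixedPointCriterion} applied in the fiber over $b=\pi(e)$), $e$ is a $k$-lateral Nash equilibrium of $\Ecal^{(\pi(e))}$, which completes the proof.
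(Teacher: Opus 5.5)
Your proposal is correct and follows essentially the paper's own argument. The paper likewise deduces the corollary from Theorem~\ref{thm:criterion_011} using two facts: membership in $\Pi\big(\bigcup_{\epsilon\in E}\Delta(\epsilon)\big)$ forces $e'=e''=:e$, and $(e,e)\in\{\epsilon\}\times\overline{\Phi}_I(\epsilon)$ forces $\epsilon=e$ for every $I$. Your identification of the two images with $\operatorname{Graph}(\overline{\Phi}_I)$ and $\Delta(E\times_B E)$ makes explicit what the paper leaves implicit, namely that the condition is literally that of Corollary~\ref{cor:FamilyOfGamesSimultaneousCoincidenceWithDiagonal}.
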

\begin{proof}
	The fact follows directly from the previous theorem and the following implications: If the point $(e',e'')\in \Pi\big(\bigcup_{\epsilon\in E} \Delta(\epsilon)\big)$ then $e'=e''=:e$, and if the point $(e,e)\in\{e'''\}\times \overline{\Phi}_I(e''')$ then $e=e'''$ and $e\in \overline{\Phi}_I(e)$.
\end{proof}

\medskip
Now, combining the criterion from Corollary  \ref{cor:criterion_021} and Lemma \ref{lem:intersection_lemma} we prove the first ever result on the existence of multilateral Nash equilibria for families of games.

\begin{theorem}\label{thm:Main_result_02}
	Let $p$ be a prime, $N\geq 1$ and $1\leq k\leq N$ integers, and let $\Ecal/B:=(\Ecal^{(b)}:b\in B)$ be a family of $N$-player non-cooperative games with the following properties
	\begin{compactitem}[\quad --]
		\item $B$ is a compact  $\mathbb{F}_p$-orientable manifold,
		\item $E_i$ is a fiber bundle over $B$ with compact total space $E_i$, which is a subbundle of a finite dimensional (real) vector bundle $V_i$ over $B$ for all $1\leq i\leq N$,
		\item $E:=E_1\times_B\cdots \times_BE_N$ is the fiber product bundle over $B$ which is also a subbundle of the fiber product of vector bundle $V:= V_1\times_B\cdots\times_B V_N\cong V_1\oplus\cdots\oplus V_N$,
		\item the best-reply correspondence $\Phi\colon E\longrightarrow 2^E$ of $\Ecal/B$ is compact valued,
		\item for every $I\in\binom{[N]}{k}$ the homomorphism in cohomology, induced by the projection map $\pi\times_B\pi\colon E\times_B E\longrightarrow B$,
		\[
			\xymatrix{
			H^{\dim(B)}(B;\mathbb{F}_p)\ar[rr]^-{(\pi\times_B\pi)^*} & & 	H^{\dim(B)}\big(\Pi\big(\bigcup_{e\in E}\widetilde{\Phi}_I (e) \big) ;\mathbb{F}_p\big)
			}
		\]
		is injective.
	\end{compactitem}

	\smallskip\noindent
	If the monomial $e(V)^{\binom{N}{k}}=e(V_1)^{\binom{N}{k}}\cdots e(V_N)^{\binom{N}{k}}$ in $\mathbb{F}_p$ Euler classes of vector bundles $V_1,\dots, V_N$ does not vanish in the cohomology $H^*(B;\mathbb{F}_p)$, then there exists a parameter $b\in B$ such that the non-cooperative game $\Ecal^{(b)}$ from the family $\Ecal/B$ has a  $k$-lateral Nash equilibrium.
\end{theorem}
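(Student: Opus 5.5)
The plan mirrors the proof of Theorem~\ref{thm:Main_result_01}: combine the criterion of Corollary~\ref{cor:criterion_021} with the intersection Lemma~\ref{lem:intersection_lemma}. This time the lemma is applied to $\binom{N}{k}+1$ subsets of the total space of a single vector bundle over $B$.

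The ambient bundle will be $W:=V\times_B V\cong V\oplus V$. The sets $\Pi\big(\bigcup_{\epsilon\in E}\widetilde{\Phi}_I(\epsilon)\big)$ are unions of graphs of fiberwise correspondences, so they lie in $E\times_B E\subseteq W$. Index the subsets $I\in\binom{[N]}{k}$ as $I_1,\dots,I_m$ with $m=\binom{N}{k}$, and set
\[
T_j:=\Pi\Big(\bigcup_{\epsilon\in E}\widetilde{\Phi}_{I_j}(\epsilon)\Big),\qquad 1\le j\le m .
\]
The injectivity of $H^{\dim B}(B;\mathbb{F}_p)\to H^{\dim B}(T_j;\mathbb{F}_p)$ is exactly the last hypothesis of the theorem. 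For $T_0$, take the intersection of the diagonal subbundle $\{(v,v)\}\subseteq V\times_B V$ with a disk bundle $D(W)$ whose radius exceeds the bound on $E$; such a radius exists because $E$ is compact. Then $T_0$ is a disk bundle over $B$, hence fiberwise contractible. Its projection to $B$ is a homotopy equivalence, so it induces an isomorphism in all degrees and in particular is injective in degree $\dim B$. Every point of $E\times_B E$ on the diagonal lies in $T_0$, so $T_0$ captures $\Pi\big(\bigcup_\epsilon\Delta(\epsilon)\big)$.

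Next I need the Euler class hypothesis in the form Lemma~\ref{lem:intersection_lemma} requires, namely $e(W)^{m}\neq 0$ (the lemma with $k=m$). Since $W\cong V\oplus V$ and the Euler class is multiplicative under Whitney sum, $e(W)=e(V)^2$, and $e(V)=e(V_1)\cdots e(V_N)$. I will check the exponent bookkeeping against the stated monomial $e(V)^{\binom Nk}$. Mimicking the $k=1$ case, where the hypothesis $e(V)^2$ produced two sets, one might instead argue with the bundle $V$ pulled back appropriately, so that the lemma needs $e(V)^{m}$ for $m+1$ sets. I would make this precise by applying the lemma on the second factor: all sets lie in $E\times_B E$, and I would project them via the second coordinate into $V$. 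In that case $T_0$ must be handled as the graph of the fiberwise identity rather than a subset of $V$, which is the delicate point. If this reduction fails, I would fall back on $W$ and the orientability remark after Theorem~\ref{thm:Main_result_01}.

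Once the exponents match, Lemma~\ref{lem:intersection_lemma} gives $T_0\cap T_1\cap\dots\cap T_m\ne\emptyset$. A point in this intersection lies on the diagonal, since it is in $T_0$ and in $E\times_B E$, and it lies in every $\Pi\big(\bigcup\widetilde\Phi_I\big)$. Corollary~\ref{cor:criterion_021} then yields a parameter $b$ and a $k$-lateral Nash equilibrium of $\Ecal^{(b)}$. I expect the main obstacle to be the Euler class exponent and ambient bundle bookkeeping just described, together with checking that the $T_j$ are closed. Closedness follows from the compact-valuedness and upper semicontinuity of the $\overline\Phi_I$ via Lemma~\ref{lem:ClosedGraphLemma}, which the cohomological lemma may need.
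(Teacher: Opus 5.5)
Your construction is exactly the paper's. You take $T_0$ to be the diagonal of $W=V\times_B V$ cut down by a large disk bundle, set $T_j=\Pi\big(\bigcup_{\epsilon\in E}\widetilde{\Phi}_{I_j}(\epsilon)\big)$, and then apply Lemma~\ref{lem:intersection_lemma} and Corollary~\ref{cor:criterion_021}. The step you flagged, however, is a genuine gap, and your proposal does not close it.

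All $\binom{N}{k}+1$ sets lie in $W$. So Lemma~\ref{lem:intersection_lemma} requires $e(W)^{\binom{N}{k}}=e(V)^{2\binom{N}{k}}\neq 0$, which is strictly stronger than the stated hypothesis $e(V)^{\binom{N}{k}}\neq 0$. None of your three suggested remedies fixes this.

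\emph{Projecting to the second factor.} This sends $T_j$ to $\bigcup_{x\in E}\overline{\Phi}_{I_j}(x)$ and $T_0$ to a disk bundle of $V$ containing $E$. A common point $y$ of these sets only gives $y\in\overline{\Phi}_{I_j}(x_j)$ for points $x_j$ that depend on $j$. The fixed-point information is lost.

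\emph{Treating the diagonal as a section of $\pi^*V$ over $E$.} This keeps the fixed-point information, but now the base is $E$, which is not a closed $\mathbb{F}_p$-orientable manifold. With convex fibres it is homotopy equivalent to $B$, so its cohomology vanishes above degree $\dim B$. Lemma~\ref{lem:intersection_lemma} therefore no longer applies. You would need a different principle, for instance that $\pi^*e(V)^{m}\neq 0$ obstructs nowhere-zero sections of $(\pi^*V)^{\oplus m}$, combined with approximation by continuous selections. That route requires convex values, which this theorem does not assume.

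\emph{Falling back on $W$ ``and the orientability remark''.} That remark only explains why the $V_i$ need not be orientable. It does not change the exponent.

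The paper's proof does not supply the missing step either. It applies Lemma~\ref{lem:intersection_lemma} to $\binom{N}{k}+1$ subsets of $V\times_B V$ and simply quotes $e(V)^{\binom{N}{k}}\neq 0$. Compare Theorem~\ref{thm:Main_result_01}, where two subsets of $V\times_B V$ correctly come with $e(V)^2=e(V\times_B V)$. For $N=k=1$ the two theorems use literally the same sets $T_0,T_1$, yet the present statement asks only for $e(V)\neq 0$.

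So your argument, like the paper's, proves the conclusion under $e(V)^{2\binom{N}{k}}\neq 0$. Reaching the stated exponent needs an idea that neither you nor the paper provides. Your bookkeeping concern was well-founded.
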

\begin{proof}
	The result follows from Corollary \ref{cor:criterion_02} and Lemma \ref{lem:intersection_lemma} by taking
	\begin{compactitem}[\quad --]
		\item  $T_1,\dots,T_{\binom{N}{k}}$ to be the subspaces $\Pi\big(\bigcup_{e\in E}\widetilde{\Phi}_I (e) \big)$ of $V\times_N V$ for $I\in\binom{[N]}{k}$, and
		\item  $T_0$ to be the intersection of the disk bundle $D(V\times_N V)$ and the total bundle of the diagonal vector subbundle of $V\times_N V$.
	\end{compactitem}
	Here the disk bundle is chosen in such a way that in contains all the sets $\Pi\big(\bigcup_{e\in E}\widetilde{\Phi}_I (e) \big)$, for $I\in\binom{[N]}{k}$, in its interior.
	This possible due to the assumed compactness of $E_1,\dots,E_N$ and $B$.
	In this way we have achieved that $T_1,\dots,T_{\binom{N}{k}}$ are compact and that
	\[
		\xymatrix{
			H^{\dim(B)}(B;\mathbb{F}_p) \longrightarrow H^{\dim(B)}(T_i;\mathbb{F}_p)
		}
	\]
	is injective for all $1\leq i\leq \binom{N}{k}$.
\end{proof}

\medskip
A particular highlight of our theorem is that the assumptions on the individual games are rather permissive, especially compared to more classical assumptions in game theory. To illustrate that, we additionally give a corollary of the previous theorem which relies on assumptions on the profit functions of the family of games rather than on the cohomology of the graphs of its best reply-correspondences.

\begin{corollary}\label{cor:MainResultMultilateralCorollay}
	Let $p$ be a prime, $N\geq 1$ an integer, and let $\Ecal/B:=(\Ecal^{(b)}:b\in B)$ be a family of $N$-player non-cooperative games with the following properties
	\begin{compactitem}[\quad --]
		\item $B$ is a compact $\mathbb{F}_p$-orientable manifold,
		\item $E_i$ is a fiber bundle over $B$ with compact total space $E_i$, which is a subbundle of a finite dimensional (real) vector bundle $V_i$ over $B$ for all $1\leq i\leq N$ with each fiber being convex,
		\item $E:=E_1\times_B\cdots \times_BE_N$ is the fiber product bundle over $B$ and is also a subbundle of the fiber product of vector bundle $V:= V_1\times_B\cdots\times_B V_N\cong V_1\oplus\cdots\oplus V_N$,
		\item $\theta_i^{(b)}(\cdot, x_{-i})\colon E_i\longrightarrow\mathbb{R}$, is quasi-concave for every $b\in B$, every $i\in [N]$ and every $x_{-i}\in E_{-i}$.
	\end{compactitem}

	\smallskip\noindent
	If the monomial $e(V_1)^{\binom{N}{k}}\cdots e(V_N)^{\binom{N}{k}}$ in $\mathbb{F}_p$ Euler classes of vector bundles $V_1,\dots, V_N$ does not vanish in the cohomology $H^*(B;\mathbb{F}_p)$, then there exists a parameter $b\in B$ such that the non-cooperative game $\Ecal^{(b)}$ from the family $\Ecal/B$ has a Nash equilibrium.
\end{corollary}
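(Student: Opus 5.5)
The plan is to reduce the statement to the intersection argument behind Theorem~\ref{thm:Main_result_01}, using the classical (unilateral) best-reply correspondence $\Phi$ of the family. The hypotheses only give quasi-concavity of $\theta_i^{(b)}$ in the player's \emph{own} variable. That is exactly enough to make $\Phi=\Phi_1\times\dots\times\Phi_N$ convex-valued. It does not obviously make the $k$-lateral correspondences $\overline{\Phi}_I$ convex-valued for $k\geq 2$, so I work with $k=1$. The concluded object is then a (unilateral) Nash equilibrium, as the statement asserts. The Euler class hypothesis is used only through the fact that $e(V)^{m}\neq 0$ with $m=\binom{N}{k}\geq 1$ forces $e(V)\neq 0$ in $H^*(B;\mathbb{F}_p)$.

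\emph{Step 1 (fiberwise regularity).} For every $b$ the fibers $E_i^{(b)}$ are compact and convex, and $\theta_i^{(b)}(\cdot,x_{-i})$ is continuous and quasi-concave. By Lemma~\ref{lem:QuasiConcaveMaximumSet} (the case $k=1$ of Lemma~\ref{lem:MultilateralBestReplyConvex}), each $\Phi_i^{(b)}(x)$ is nonempty, compact and convex, hence so is $\Phi(x)$. By Berge's Theorem~\ref{thm:MaximumTheorem}, applied on the compact total space $E$ with the fiber-constraint correspondence $x\mapsto E_i^{(\pi(x))}$ (continuous because $E_i$ is a locally trivial bundle), $\Phi$ is upper semicontinuous. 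Lemma~\ref{lem:ClosedGraphLemma} then gives that $\Gamma:=\Pi\big(\bigcup_{e}\widetilde{\Phi}(e)\big)=\operatorname{Graph}(\Phi)\subseteq E\times_B E$ is compact.

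\emph{Step 2 (the cohomological hypothesis).} I must show that $(\pi\times_B\pi)^*\colon H^{\dim B}(B;\mathbb{F}_p)\to H^{\dim B}(\Gamma;\mathbb{F}_p)$ is injective. Factor this map as $\Gamma\to E\to B$.
\begin{itemize}
\item The first map is the projection to the first coordinate. It is a closed surjection between compact spaces whose fibers $\{x\}\times\Phi(x)$ are nonempty compact convex sets, hence acyclic. The Vietoris--Begle theorem (in \v{C}ech cohomology, which agrees with singular cohomology for these compact ENR-like sets, or one can work in \v{C}ech throughout) gives an isomorphism $H^*(E)\cong H^*(\Gamma)$.
\item Likewise, $\pi\colon E\to B$ has compact convex fibers $E^{(b)}$, so $H^*(B)\cong H^*(E)$.
\end{itemize}
The composite is therefore an isomorphism, in particular injective in degree $\dim B$. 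For the diagonal set $T_0$, I take the intersection of a large disk bundle of $V\times_B V$ with the diagonal subbundle, exactly as in the proof of Theorem~\ref{thm:Main_result_01}; it projects isomorphically in cohomology to $B$.

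\emph{Step 3 (intersection).} Apply Lemma~\ref{lem:intersection_lemma} to the vector bundle $V\times_B V\cong V\oplus V$ with only the two sets $T_0$ and $T_1:=\Gamma$. This requires the first power of the Euler class of $V\oplus V$ to be nonzero, namely $e(V)^2$. Here is the one point needing care: the hypothesis $e(V)^{\binom{N}{k}}\neq 0$ gives $e(V)^2\neq 0$ only when $\binom{N}{k}\geq 2$. In the degenerate cases $k=N$ or $N=1$, I would instead run the lemma on $V$ itself via the diagonal: consider the fiberwise difference map $(x,y)\mapsto y-x$ from $\Gamma$ into $V$ and intersect its image with the zero section. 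This only needs $e(V)\neq 0$. The Vietoris--Begle/convexity argument of Step 2 applies verbatim to this image. In either version one gets a point of $\Gamma$ on the diagonal, i.e.\ $x\in\Phi(x)$, and Corollary~\ref{cor:criterion_02} (equivalently Theorem~\ref{thm:FiberwiseBestReplyCondition}) yields $b\in B$ with $\NashEquilibria(\Ecal^{(b)})\neq\emptyset$.

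I expect the main obstacle to be Step 2: justifying the Vietoris--Begle application rigorously for the upper semicontinuous convex-valued correspondence, and matching \v{C}ech with singular cohomology on $\Gamma$. Equivalently, this is the exponent bookkeeping in Step 3 for the edge cases. The game-theoretic input is entirely contained in Step 1.
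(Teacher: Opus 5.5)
Your proof is correct for the statement as written, whose conclusion is an ordinary Nash equilibrium. It follows a different route from the paper's.

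The paper derives the corollary from Theorem~\ref{thm:Main_result_02}. It intersects the diagonal disk bundle with all $\binom{N}{k}$ graphs $\Pi\big(\bigcup_{e}\widetilde{\Phi}_I(e)\big)$. It checks the injectivity hypothesis for each $I$ via $\Pi^*\circ(\pi\times_B\pi)^*=\mu^*\circ\pi^*$: $\mu$ is taken to be a Vietoris map because $\widetilde{\Phi}_I(e)$ is convex by Lemma~\ref{lem:MultilateralBestReplyConvex}, and $\pi$ because the fibers are convex. That route is built to produce a $k$-lateral equilibrium (hence a Nash equilibrium by \eqref{eq:filtration}), which is where the exponent $\binom{N}{k}$ comes from.

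You run the same Vietoris factorization $\Gamma\to E\to B$, which is the $k=1$ instance of the paper's square. You do it only for the single unilateral correspondence $\Phi$, and then intersect just two sets, as in Theorem~\ref{thm:Main_result_01}.

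What your restriction buys is that every step rests on the stated hypotheses. Quasi-concavity in the player's own variable is exactly what makes $\Phi(x)$ nonempty and convex. For $k\geq 2$, the paper's Vietoris step for $\mu$ needs $\Phi_I(e)=\bigcap_{i\in I}\operatorname{argmax}\,\theta_i(\cdot,x_{-I})$ to be nonempty and convex, and neither is guaranteed:
\begin{itemize}
\item Lemma~\ref{lem:MultilateralBestReplyConvex} gives convexity only under quasi-concavity jointly in $y_I$, which the corollary does not assume.
\item Nonemptiness requires a common maximizer. This typically fails: in the Cournot duopoly of Example~\ref{ex:CournotCompetition}, the maximizers of $\theta_1$ and $\theta_2$ over $(y_1,y_2)$ are $(95,0)$ and $(0,50)$, so $\Phi_{\{1,2\}}$ is empty everywhere.
\end{itemize}
The price of your route is that it only ever yields a unilateral equilibrium. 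The exponent $\binom{N}{k}$ then plays no role; you really only use $e(V)\neq 0$.

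One cleanup. The case split in Step~3 is unnecessary, since the difference-map version works for every $k$. Its justification, however, is not a verbatim Vietoris--Begle argument, because the fibers of $d(\Gamma)\to B$ need not be convex. Instead, use that $\Gamma\to B$ factors as $\Gamma\xrightarrow{d} d(\Gamma)\to B$. Injectivity of $H^{\dim B}(B;\mathbb{F}_p)\to H^{\dim B}(\Gamma;\mathbb{F}_p)$ then forces injectivity of $H^{\dim B}(B;\mathbb{F}_p)\to H^{\dim B}(d(\Gamma);\mathbb{F}_p)$. Lemma~\ref{lem:intersection_lemma}, with $T_0$ the zero section of $V$, then applies.
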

\begin{proof}
	To simplify the notation we often denote restrictions of a map with the same symbol as the original map.
	In order to apply Theorem \ref{thm:Main_result_02} to our current situation we only need to prove that the map in cohomology
	\[
		\xymatrix{
		H^{\dim(B)}(B;\mathbb{F}_p)\ar[rr]^-{(\pi\times_B\pi)^*} & & 	H^{\dim(B)}\big(\Pi\big(\bigcup_{e\in E}\widetilde{\Phi}_I (e) \big) ;\mathbb{F}_p\big)
		}
	\]
	is injective for every $I\in\binom{[N]}{k}$.
	Consider the following derivative of the commutative diagram \eqref{eq:pull-back}:
	\begin{equation*}\label{eq:pull-back-03}
		\xymatrix{
		\bigcup_{e\in E}\widetilde{\Phi}_I (e)\ar[rr]^-{\Pi}\ar[d]^{\mu} & & \Pi\big(\bigcup_{e\in E}\widetilde{\Phi}_I (e) \big)\ar[d]^{\pi\times_B\pi}\\
		E\ar[rr]^-{\pi}& & B.
		}
	\end{equation*}
	Applying the cohomology functor $H^*(\cdot\ ;\mathbb{F}_p)$ we get the commutative diagram:
	\begin{equation*}\label{eq:pull-back-0e}
		\xymatrix{
		H^*(\bigcup_{e\in E}\widetilde{\Phi}_I (e);\mathbb{F}_p)  & &\ar[ll]_-{\Pi^*} H^{*}\big(\Pi\big(\bigcup_{e\in E}\widetilde{\Phi}_I (e) \big) ;\mathbb{F}_p\big)\\
		H^*(E;\mathbb{F}_p) \ar[u]_{\mu^*}& & \ar[ll]_{\pi^*} H^*(B;\mathbb{F}_p)\ar[u]_{(\pi\times_B\pi)^*}.
		}
	\end{equation*}
	Since $\Pi^*\circ ( \pi\times_B\pi)^*=\mu^*\circ\pi^*$, to prove that $( \pi\times_B\pi)^*$ is injective, it suffices to show that $\mu^*\circ\pi^*$ is injective.

	\smallskip
	The quasi-concavity of the functions $\theta_i^{(b)}(\cdot, x_{-i})\colon E_i\longrightarrow\mathbb{R}$, according to Lemma \ref{lem:MultilateralBestReplyConvex}, yields the convexity of the best-reply correspondence $\Phi_I(e)$ and, therefore, the set-valued section $\widetilde{\Phi}_I (e)$ for every $e\in E$.
	Consequently, the map $\mu$ is a Vietoris map which, by Vietoris' theorem \cite{Vietoris1927}, induces an isomorphism $\mu^*\colon H^*(E;\mathbb{F}_p)\longrightarrow H^*(\bigcup_{e\in E}\widetilde{\Phi}_I (e);\mathbb{F}_p)$ in all degrees.

	\smallskip
	On the other hand, since we assumed that the fibers in $E_1,\dots,E_N$ are convex, we have that the fibers of $E$, which is the fiber product of $E_1,\dots,E_N$ over $B$, are also convex, implying that $\pi$ is a Vietoris map as well.
	Thus, $\pi^*\colon H^*(B;\mathbb{F}_p)\longrightarrow H^*(E;\mathbb{F}_p)$ is an isomorphism.

	\smallskip
	In summary, we have proved that $( \pi\times_B\pi)^*$ is injective, and so we completed the proof.
\end{proof}

\medskip
The quasi-concavity assumption models the famous law of economics known as the law of diminishing marginal returns, which roughly states that for every additional unit of input, the increase in output must diminish.

\medskip
The previous result, Corollary \ref{cor:MainResultMultilateralCorollay}, can be improved further by relaxing the condition on the vector bundle $V\longrightarrow B$ as follows.
To accomplish that, we organize the $\binom{N}{k}$ set-valued sections $\widetilde{\Phi}_I\colon E\longrightarrow 2^{\pi^*(E\times_B E)}$ of the bundle $\pi^*(E\times_B E)$ in a particular way.

\medskip
First we observe the following  property of the modified best-reply correspondences $\overline{\Phi}_I$.
Let $I_1, I_2 \in\binom{[N]}{k}$ and assume that $I_1\cap I_2=\emptyset$.
Suppose $b\in B$ is arbitrary and additionally assume that $y\in \overline{\Phi}^{(b)}_{I_1}(x)$ and $y\in \overline{\Phi}^{(b)}_{I_2}(x)$ for some $x,y\in E^{(b)}$.
Then by definition of the modified best-reply correspondences we have that
\begin{align*}
	y\in \overline{\Phi}_{I_1}^{(b)}(x):= & \{(y_{I_1}, z_{-I_1})\in E^{(b)} : z_{-I_1}\in E^{(b)}_{-I_1}, \ \theta_{I_1}^{(b)}(y_{I_1}, x_{-I_1}) = \phi_{I_1}^{(b)}(x)\}, \\
	y\in \overline{\Phi}_{I_2}^{(b)}(x):= & \{(y_{I_2}, z_{-I_2})\in E^{(b)} : z_{-I_2}\in E^{(b)}_{-I_2}, \ \theta_{I_2}^{(b)}(y_{I_2}, x_{-I_2}) = \phi_{I_2}^{(b)}(x)\}.
\end{align*}
Since we have assumed that $I_1\cap I_2=\emptyset$ the following equivalence holds:
\[
	y\in \overline{\Phi}_{I_1}^{(b)}(x) \cap \overline{\Phi}_{I_2}^{(b)}(x)
\]
if and only if
\begin{displaymath}
	y\in \{(y_{I_1\cup I_2}, z_{-I_1\cup I_2}) : z_{-I_1\cup I_2} \in E^{(b)}_{-I_1\cup I_2}, \  \theta_{I_1}^{(b)}(y_{I_1}, x_{-I_1}) = \phi_{I_1}^{(b)}(x), \ \theta_{I_2}^{(b)}(y_{I_2}, x_{-I_2}) = \phi_{I_2}^{(b)}(x)\}.
\end{displaymath}

\medskip
This observation leads us to the following definition of the set-valued map
\[
	\overline{\Phi}_{\{I_1,I_2,\dots,I_r\}}^{(b)}(x) \colon E^{(b)}\longrightarrow 2^{E^{(b)}}
\]
associated to any collection $\{I_1, \dots, I_r\}\in\binom{[N]}{k}$ of pairwise disjoint $k$ element subsets of $[N]$:
\begin{multline*}
	\overline{\Phi}_{\{I_1,I_2,\dots,I_r\}}^{(b)}(x) := \bigcap_{m=1}^{r} \overline{\Phi}_{I_m}^{(b)}(x) = \\ \big\{(y_{\bigcup_{m=1}^r I_m}, z_{-\bigcup_{m=1}^r I_m}) : z_{-\bigcup_{m=1}^r I_m} \in E^{(b)}_{-\bigcup_{m=1}^r I_m}, \ (\forall 1 \leq m \leq r) \ \theta_{I_m}^{(b)}(y_{I_m}, x_{-I_m}) = \phi_{I_m}^{(b)}(x)\big\}.
\end{multline*}
This extends in the obvious way to the set-valued map $\overline{\Phi}_{I_1,I_2,\dots,I_r}\colon E \longrightarrow 2^{E}$ for the entire family by setting
\begin{displaymath}
	\overline{\Phi}_{\{I_1,I_2,\dots,I_r\}}(x) = \overline{\Phi}_{\{I_1,I_2,\dots,I_r\}}^{(b)}(x)
\end{displaymath}
for $x \in E^{(b)}\subseteq E$.
Note that by construction
\[
	\overline{\Phi}_{\{I_1,I_2,\dots,I_r\}}(x) = \bigcap_{m=1}^r \overline{\Phi}_{I_m}(x)
\]
for every $x\in E$.
Analogously to the previous constructions, the set-valued map we just introduced $\overline{\Phi}_{\{I_1,I_2,\dots,I_r\}}\colon E \longrightarrow 2^{E}$ induces the corresponding set-valued section $\widetilde{\Phi}_{\{I_1,I_2,\dots,I_r\}}\colon E\longrightarrow 2^{\pi^*(E\times_B E)}$ of the bundle $\pi^*(E\times_B E)$ by setting
\begin{displaymath}
	\widetilde{\Phi}_{\{I_1,I_2,\dots,I_r\}} (x):= \{x\}\times\{x\}\times  \overline{\Phi}_{\{I_1,I_2,\dots,I_r\}}(x) \ \subseteq \ \mu^{-1}(\{x\})  \ \subseteq \ \pi^*(E\times_B E).
\end{displaymath}

\medskip
Thus, the intersection from the simultaneous coincidence condition in Theorem~\ref{thm:criterion_011} and Corollary~\ref{cor:criterion_021} can be replaced with an intersection containing fewer sets, by grouping the modified best-reply correspondences as above.

\medskip
To get insight into how many sets will be left in the intersection after the grouping, we need to consider the following question:
What is the minimal cardinality $\xi(N,k)$ of a partition $\{\mathcal{P}_1,\dots,\mathcal{P}_{\ell}\} \subseteq 2^{\binom{[N]}{k}}$ of the family $\binom{[N]}{k}$ satisfying the property that any two $k$-element subsets of $[N]$ belonging to the same element of the partition, say $\mathcal{P}_i$, must be disjoint?
For such a partition $\{\mathcal{P}_1,\dots,\mathcal{P}_{\ell}\}$ of $\binom{[N]}{k}$ we have that
\begin{equation}\label{eq:BestReplyIntersection}
	\bigcap_{I\in \binom{[N]}{k}}\widetilde{\Phi}_I=\bigcap_{i=1}^{\ell}\bigcap_{I\in\mathcal{P}_i}\widetilde{\Phi}_{I}=\bigcap_{i=1}^{\ell} \widetilde{\Phi}_{\mathcal{P}_i},
\end{equation}
which is why we are interested in understanding what the minimal possible number $\xi(N,k)$ for the cardinality $\ell$ is.
It is evident that $\xi(N,1)=N$,  $\xi(N,k)\leq \binom{N}{k}$ and $\xi(N,k)= \binom{N}{k}$ for all $k\geq \lfloor\frac{N}{2}\rfloor+1$.
Furthermore, we have the following lemma.

\begin{lemma}\label{lem:value_of_xi}
	Let $N\geq 1$ and $1\leq k\leq N$ be integers.
	Then $\xi(N,k)$ coincides with the clique-covering number of the Kneser graph $K(N,k)$.
\end{lemma}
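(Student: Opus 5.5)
The plan is to unwind the definitions on both sides and observe that they describe the same combinatorial object. Recall that the Kneser graph $K(N,k)$ has vertex set $\binom{[N]}{k}$, and two vertices $I,J$ are adjacent exactly when $I\cap J=\emptyset$. A clique in $K(N,k)$ is therefore a family of $k$-element subsets of $[N]$ that are pairwise disjoint. The clique covering number is the minimal number of cliques whose union is the whole vertex set $\binom{[N]}{k}$.

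First I will show that every partition counted by $\xi(N,k)$ is a clique cover. Let $\{\mathcal{P}_1,\dots,\mathcal{P}_\ell\}$ be a partition of $\binom{[N]}{k}$ in which any two members of the same block are disjoint. Then each block $\mathcal{P}_i$ is a clique of $K(N,k)$, and the blocks cover all vertices. Hence the clique covering number is at most $\xi(N,k)$.

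Next I will prove the reverse inequality. This is the only point needing a small argument, because a clique cover may use overlapping cliques while $\xi$ requires a partition. Take a clique cover $\mathcal{C}_1,\dots,\mathcal{C}_\ell$ of minimal size and replace it by
\[
\mathcal{P}_i:=\mathcal{C}_i\setminus\bigcup_{j<i}\mathcal{C}_j .
\]
Any subfamily of a clique is again a clique, since pairwise disjointness is inherited by subfamilies. The sets $\mathcal{P}_i$ are pairwise disjoint and still cover $\binom{[N]}{k}$, so after discarding empty ones they form an admissible partition with at most $\ell$ blocks. This gives $\xi(N,k)\le \ell$.

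Combining the two inequalities yields equality. As a consistency check, for $k=1$ the graph $K(N,1)$ is complete, the whole vertex set is a single clique, and the clique covering number is $1$. The minimal partition of $\binom{[N]}{1}$ into pairwise disjoint singletons is likewise the single block $\{\{1\},\dots,\{N\}\}$. This matches the introduction's remark that there is one global best-reply correspondence. It also shows that the earlier displayed claim $\xi(N,1)=N$ is a slip; the correct value is $1$, and I would note this when writing the proof.

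For $k\ge \lfloor N/2\rfloor+1$, no two $k$-subsets are disjoint, so the cliques are singletons and the value is $\binom{N}{k}$, as stated. There is no real obstacle in this proof beyond the refinement step from a cover to a partition.
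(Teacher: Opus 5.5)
Your proof is correct and follows the same route as the paper, which simply observes that cliques in $K(N,k)$ are exactly families of pairwise disjoint $k$-subsets, so admissible partitions of $\binom{[N]}{k}$ are the same as clique covers. Your refinement of an overlapping clique cover into a partition spells out a step the paper leaves implicit, and you are right that the paper's claim $\xi(N,1)=N$ is a slip: the correct value is $\xi(N,1)=1$, consistent with the paper's own remark that $K(N,1)$ has clique covering number $1$.
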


\medskip
Before arguing this lemma, we recall the relevant notions, for more insight, see for example the classical book \textit{Introduction to Graph Theory} by Douglas B. West from 2001~\cite[Def. 5.3.18]{DouglasWest2001}. We start with the Kneser Graph.

\medskip
Let $N\geq 1$ and $1\leq k\leq N$ be integers.
The Kneser Graph $K(N,k)$ is defined as follows:
\begin{compactitem}[\quad --]
	\item the set of vertices $V(K(N,k)) = \binom{[N]}{k}$ is the set of all $k$-element subsets of $[N]$, and
	\item for two vertices $I,J\in V(K(N,k))$ there is an edge between them, $\{I,J\}\in E(K(N,k))$, if and only if $I\cap J = \emptyset$.
\end{compactitem}
So in the Kneser Graph, exactly those subsets of $[N]$ are connected by an edge, which are disjoint.
For example, $K(N,1)$ is a complete graph on $N$ vertices and $K(N,N)$ is a graph with one vertex and no edges.

\medskip
Next, recall the definition of a clique in a graph.
A subgraph $C$ of the graph $G$ is called a clique of size $c\geq 1$, if it is (isomorphic to) a complete graph on $c$ vertices.
Thus, a clique in a Kneser graph is a collection of pairwise disjoint subsets in $\binom{[N]}{k}$.

\medskip
Finally, before we get back to Lemma~\ref{lem:value_of_xi}, recall what is the clique covering number.
The clique covering number of a graph $G$, denoted by $\theta(G)$, is the minimal number of cliques $C_1,\dots,C_{\theta(G)}$ of the graph $G$ needed to cover the vertices of $G$, i.e.,
\begin{displaymath}
	V(G) = V(C_1)\cup\cdots\cup V(C_{\theta(G)}).
\end{displaymath}
Now, a clique covering of $K(N,k)$ is exactly a partition of $\binom{[N]}{k}$ into collections of pairwise disjoint sets, as Lemma~\ref{lem:value_of_xi} claims.

\medskip
As noted already, $K(N,1)$ is the complete graph on $N$ vertices, and so its clique covering number is $1$.
This means that in the case $k=1$ it suffices to have just one member of the intersection~\eqref{eq:BestReplyIntersection}, which corresponds to the fact that in the classical case, the best-reply correspondences can be combined into just one ``global'' best-reply correspondence.

\medskip
Having in mind the just described reorganization of the $\binom{N}{k}$ set-valued sections $\widetilde{\Phi}_I$ of $\pi^*(E\times_B E)$ into $\xi(N,k)$ ``intersection'' set-valued sections of $\pi^*(E\times_B E)$, of the form $\widetilde{\Phi}_{\{I_1,I_2,\dots,I_r\}}$, we get a further improvement of Theorem \ref{thm:Main_result_02} and Corollary \ref{cor:MainResultMultilateralCorollay}.

\begin{theorem}\label{thm:Main_result_02_02}
	Let $p$ be a prime, $N\geq 1$ and $1\leq k\leq N$ integers, and let $\Ecal/B:=(\Ecal^{(b)}:b\in B)$ be a family of $N$-player non-cooperative games with the following properties
	\begin{compactitem}[\quad --]
		\item $B$ is a compact  $\mathbb{F}_p$-orientable manifold,
		\item $E_i$ is a fiber bundle over $B$ with compact total space $E_i$, which is a subbundle of a finite dimensional (real) vector bundle $V_i$ over $B$ for all $1\leq i\leq N$,
		\item $E:=E_1\times_B\cdots \times_BE_N$ is the fiber product bundle over $B$ which is also a subbundle of the fiber product of vector bundle $V:= V_1\times_B\cdots\times_B V_N\cong V_1\oplus\cdots\oplus V_N$,
		\item the best-reply correspondence $\Phi\colon E\longrightarrow 2^E$ of $\Ecal/B$ is compact valued,
		\item for every $I\in\binom{[N]}{k}$ the homomorphism in cohomology, induced by the projection map $\pi\times_B\pi\colon E\times_B E\longrightarrow B$,
		\[
			\xymatrix{
			H^{\dim(B)}(B;\mathbb{F}_p)\ar[rr]^-{(\pi\times_B\pi)^*} & & 	H^{\dim(B)}\big(\Pi\big(\bigcup_{e\in E}\widetilde{\Phi}_I (e) \big) ;\mathbb{F}_p\big)
			}
		\]
		is injective.
	\end{compactitem}

	\smallskip\noindent
	If the monomial $e(V)^{\xi(N,k)}=e(V_1)^{\xi(N,k)}\cdots e(V_N)^{\xi(N,k)}$ in $\mathbb{F}_p$ Euler classes of vector bundles $V_1,\dots, V_N$ does not vanish in the cohomology $H^*(B;\mathbb{F}_p)$, then there exists a parameter $b\in B$ such that the non-cooperative game $\Ecal^{(b)}$ from the family $\Ecal/B$ has a  $k$-lateral Nash equilibrium.
\end{theorem}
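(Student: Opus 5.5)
The plan is to repeat the proof of Theorem~\ref{thm:Main_result_02}, but with fewer sets in the intersection. Fix a clique covering $\{\mathcal{P}_1,\dots,\mathcal{P}_{\ell}\}$ of the Kneser graph $K(N,k)$ with $\ell=\xi(N,k)$; by Lemma~\ref{lem:value_of_xi} this is a partition of $\binom{[N]}{k}$ into collections of pairwise disjoint $k$-subsets. By \eqref{eq:BestReplyIntersection}, the simultaneous coincidence condition of Corollary~\ref{cor:criterion_021} becomes
\[
\bigcap_{j=1}^{\ell}\Pi\Big(\bigcup_{e\in E}\widetilde{\Phi}_{\mathcal{P}_j}(e)\Big)\ \cap\ \Pi\Big(\bigcup_{e\in E}\Delta(e)\Big)\neq\emptyset .
\]
Since $\Pi$ restricted to the union of the $\widetilde{\Phi}$'s is injective (it forgets the repeated coordinate $e$), intersections commute with $\Pi$, so this regrouping is legitimate.

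I then apply Lemma~\ref{lem:intersection_lemma} with exponent $\ell=\xi(N,k)$ to the bundle $V\times_B V$ over $B$. Its Euler class is $e(V)^2$, but, as in the original proof, I track the $\ell$-th power of the relevant class, namely $e(V)^{\xi(N,k)}$. The sets are as follows. $T_0$ is the compact piece of the diagonal subbundle cut out by a disk bundle large enough to contain everything, so $H^*(B)\to H^*(T_0)$ is an isomorphism. $T_j=\Pi\big(\bigcup_e\widetilde{\Phi}_{\mathcal{P}_j}(e)\big)$ for $j=1,\dots,\ell$. Compactness of the $T_j$ follows from compactness of $E$ and $B$, compact-valuedness, and closed graphs (Lemma~\ref{lem:MultilateralBestReplyUpperSemicontinuous} together with Lemma~\ref{lem:ClosedGraphLemma}). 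The lemma then yields a point in $T_0\cap T_1\cap\dots\cap T_\ell$, and Corollary~\ref{cor:criterion_021} converts it into a $k$-lateral Nash equilibrium of some game $\Ecal^{(b)}$.

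The main obstacle is the injectivity hypothesis of Lemma~\ref{lem:intersection_lemma} for the grouped sets $T_j$. The theorem only assumes injectivity of $H^{\dim B}(B)\to H^{\dim B}(\Pi(\bigcup_e\widetilde{\Phi}_I(e)))$ for single sets $I$. My first attempt is to use the disjointness within each $\mathcal{P}_j$. For disjoint $I_1,\dots,I_r$, the set $\overline{\Phi}_{\{I_1,\dots,I_r\}}(x)$ is the product $\prod_m\Phi_{I_m}(x)\times E^{(b)}_{-\bigcup I_m}$. So $T_j$ is a fiber product over $E$ of the graphs for the individual $I_m$, and the projection $T_j\to B$ factors through each $\Pi(\bigcup_e\widetilde{\Phi}_{I_m}(e))$ via the coordinate projections. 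I would try to exhibit a map $\Pi(\bigcup_e\widetilde{\Phi}_{I_1}(e))\to T_j$ over $B$, i.e.\ a section-like choice in the remaining factors. Then the composite $H^{\dim B}(B)\to H^{\dim B}(T_j)\to H^{\dim B}(\Pi(\ldots_{I_1}))$ is injective, which forces injectivity into $H^{\dim B}(T_j)$.

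If such a map is not available in general, I fall back on the Vietoris argument from Corollary~\ref{cor:MainResultMultilateralCorollay}. Under convexity, the fibers of $\mu$ restricted to $\bigcup_e\widetilde{\Phi}_{\mathcal{P}_j}(e)$ are products of convex sets, hence acyclic, and injectivity follows exactly as there. In that case I would state this as the intended reading of the hypothesis, namely that injectivity is required for each grouped section.
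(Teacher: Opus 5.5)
You follow the paper's own route. The paper only says that the proof of Corollary~\ref{cor:Main_result_01_02} ``evolves'' into a proof of this theorem, and that proof is exactly yours: a minimal clique cover, the regrouped criterion, and Lemma~\ref{lem:intersection_lemma} applied to the truncated diagonal and the $\ell$ grouped graphs. Your remark that $\Pi$ is injective on $\bigcup_e\widetilde{\Phi}_I(e)$ correctly justifies the regrouping. But two steps fail as written, and the paper does not close them either.

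\textbf{Grouped injectivity.} Your first attempt needs a continuous map over $B$ from $\Pi(\bigcup_e\widetilde{\Phi}_{I_1}(e))$ into $T_j$. That means choosing points of $\Phi_{I_m}(e)$, $m\ge 2$, continuously in $e$, i.e.\ a continuous selection of an upper semicontinuous, compact-valued argmax correspondence. Such selections need not exist (Michael's theorem requires lower semicontinuity and convex values). Nor is there an abstract reason for the implication. $T_j$ is a fiber product over $E$, and a loop in $\operatorname{Graph}(\Phi_{I_1})$ winding around $B$ need not lift to it, because $\Phi_{I_2}$ may jump between disjoint pieces along the projection of that loop to $E$. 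Already for $B=S^1$, $N=3$, $k=1$ one can choose continuous profits so that every single-$I$ graph is injective on $H^1$, while the grouped graph splits into two contractible pieces. So the hypothesis as stated does not feed into Lemma~\ref{lem:intersection_lemma}. The paper obtains grouped injectivity only under the convexity assumptions of the corollary, via Vietoris, which is your fallback. Imposing injectivity on $\Pi(\bigcup_e\widetilde{\Phi}_{\mathcal{P}_j}(e))$ for each part $\mathcal{P}_j$ is the right hypothesis; for $k=1$ it is literally the hypothesis of Theorem~\ref{thm:Main_result_01}. But adopting it changes the statement rather than proving it.

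\textbf{The exponent.} All your sets lie in $V\times_B V\cong V\oplus V$. With $\ell+1$ sets, Lemma~\ref{lem:intersection_lemma} therefore needs $e(V\oplus V)^{\ell}=e(V)^{2\ell}\neq 0$. ``Tracking the $\ell$-th power of $e(V)$'' justifies nothing, and $e(V)^{\ell}\neq 0$ does not imply $e(V)^{2\ell}\neq 0$: take the tautological line bundle over $\mathbb{R}P^d$ with $d/2<\ell\le d$. The case $k=1$ makes this visible. There $\xi(N,1)=1$ (the paper's claim $\xi(N,1)=N$ contradicts its own remark that $K(N,1)$ is complete), $\overline{\Phi}_{\mathcal{P}_1}=\Phi$, and your argument becomes the proof of Theorem~\ref{thm:Main_result_01}, which requires $e(V)^2\neq 0$.

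What your argument actually proves is the statement with grouped injectivity and $e(V)^{2\xi(N,k)}\neq 0$. To reach exponent $\xi(N,k)$ you need a different set-up. For example, apply Lemma~\ref{lem:intersection_lemma} to just two sets in $V^{\oplus\ell}$: the zero section, and the image of the joint graph $\{(x,y_1,\dots,y_\ell): y_j\in\overline{\Phi}_{\mathcal{P}_j}(x)\}$ under $(x,y)\mapsto(y_1-x,\dots,y_\ell-x)$. This needs only $e(V)^{\ell}\neq 0$, but it requires injectivity for the joint graph, which Vietoris supplies in the convex setting.
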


\medskip
The improvement of Corollary \ref{cor:MainResultMultilateralCorollay} is the following result.
The proof of the corollary we present below easily evolves into a proof of the previous theorem.

\begin{corollary}\label{cor:Main_result_01_02}
	Let $p$ be a prime, $N\geq 1$ an integer, and let $\Ecal/B:=(\Ecal^{(b)}:b\in B)$ be a family of $N$-player non-cooperative games with the following properties
	\begin{compactitem}[\quad --]
		\item $B$ is a compact $\mathbb{F}_p$-orientable manifold,
		\item $E_i$ is a fiber bundle over $B$ with compact total space $E_i$, which is a subbundle of a finite dimensional (real) vector bundle $V_i$ over $B$ for all $1\leq i\leq N$ with each fiber being convex,
		\item $E:=E_1\times_B\cdots \times_BE_N$ is the fiber product bundle over $B$ and is also a subbundle of the fiber product of vector bundle $V:= V_1\times_B\cdots\times_B V_N\cong V_1\oplus\cdots\oplus V_N$,
		\item $\theta_i^{(b)}(\cdot, x_{-i})\colon E_i\longrightarrow\mathbb{R}$, is quasi-concave for every $b\in B$, every $i\in [N]$ and every $x_{-i}\in E_{-i}$.
	\end{compactitem}

	\smallskip\noindent
	If the monomial $e(V_1)^{\xi(N,k)}\cdots e(V_N)^{\xi(N,k)}$ in $\mathbb{F}_p$ Euler classes of vector bundles $V_1,\dots, V_N$ does not vanish in the cohomology $H^*(B;\mathbb{F}_p)$, then there exists a parameter $b\in B$ such that the non-cooperative game $\Ecal^{(b)}$ from the family $\Ecal/B$ has a $k$-lateral Nash equilibrium.
\end{corollary}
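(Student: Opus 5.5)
We combine the grouping identity \eqref{eq:BestReplyIntersection} with the Vietoris argument from the proof of Corollary~\ref{cor:MainResultMultilateralCorollay}, and then apply Lemma~\ref{lem:intersection_lemma} with only $\xi(N,k)+1$ sets instead of $\binom{N}{k}+1$.

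First, fix a partition $\{\mathcal{P}_1,\dots,\mathcal{P}_\ell\}$ of $\binom{[N]}{k}$ into cliques of the Kneser graph with $\ell=\xi(N,k)$; this exists by Lemma~\ref{lem:value_of_xi}. By \eqref{eq:BestReplyIntersection} and Corollary~\ref{cor:criterion_021}, it suffices to show
\[
	\bigcap_{j=1}^{\ell}\Pi\Big(\bigcup_{e\in E}\widetilde{\Phi}_{\mathcal{P}_j}(e)\Big)\ \cap\ \Pi\Big(\bigcup_{e\in E}\Delta(e)\Big)\neq\emptyset .
\]
Note that $\Pi$ commutes with these intersections, since all sets lie over the same first coordinate $e$ and $\Pi$ is injective on each fibre $\mu^{-1}(e)$.

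Next, define the sets for Lemma~\ref{lem:intersection_lemma}, applied to the bundle $V\times_B V\cong V\oplus V$ with exponent $\ell$:
\begin{itemize}
	\item $T_j:=\Pi\big(\bigcup_e\widetilde{\Phi}_{\mathcal{P}_j}(e)\big)$ for $1\le j\le\ell$;
	\item $T_0:=D(V\times_BV)\cap\Delta_V$, where $\Delta_V$ is the diagonal subbundle and the disk bundle is taken large enough to contain $E\times_BE$. This is possible because $E$ and $B$ are compact.
\end{itemize}
The hypothesis of the lemma requires nonvanishing of a power of the Euler class. The relevant power is $e(V\oplus V)^{\ell}$, or the monomial in the statement, matching the convention of Theorem~\ref{thm:Main_result_02}, where a square-type Euler class term was also used. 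Injectivity of $H^{\dim B}(B)\to H^{\dim B}(T_0)$ is clear, since $T_0$ is a disk bundle over $B$ and hence homotopy equivalent to $B$.

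The main step is injectivity for each $T_j$. I would repeat the diagram chase from the proof of Corollary~\ref{cor:MainResultMultilateralCorollay}. It suffices that $\mu^*\circ\pi^*$ is an isomorphism, where $\mu$ restricts to $\bigcup_e\widetilde{\Phi}_{\mathcal{P}_j}(e)\to E$.

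The map $\pi\colon E\to B$ has convex fibres, so it is Vietoris.

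For $\mu$, the fibre over $x$ is $\overline{\Phi}_{\mathcal{P}_j}(x)$. Because the sets $I\in\mathcal{P}_j$ are pairwise disjoint, this fibre is a product
\[
	\prod_{I\in\mathcal{P}_j}\Phi_I(x)\times E^{(b)}_{-\bigcup\mathcal{P}_j}.
\]
Here each $\Phi_I(x)$ is convex by Lemma~\ref{lem:MultilateralBestReplyConvex}, nonempty and compact by compactness and continuity, and the remaining factor is convex. Hence the fibre is convex and nonempty, i.e.\ acyclic. I also need $\mu$ to be a closed (proper) surjection. This follows from Lemma~\ref{lem:MultilateralBestReplyUpperSemicontinuous} together with Lemma~\ref{lem:ClosedGraphLemma}: they give a closed graph for each $\Phi_I$, hence for $\overline{\Phi}_{\mathcal{P}_j}$, inside a compact space. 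Vietoris' theorem then makes $\mu^*$ an isomorphism, and injectivity of $(\pi\times_B\pi)^*$ on $T_j$ follows.

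I expect the main obstacle to be the quasi-concavity hypothesis. It is stated only in each player's own variable, whereas Lemma~\ref{lem:MultilateralBestReplyConvex} uses quasi-concavity of $\theta_i$ jointly in $y_I$. I would read the hypothesis as the needed quasi-concavity on $E_I$, since that is how it is used in Corollary~\ref{cor:MainResultMultilateralCorollay}.

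A second subtlety is the intersection $\bigcap_{i\in I}\operatorname{argmax}$, which may be empty. Convexity of the fibres does not settle this, and Vietoris requires nonempty fibres. Here I would also follow the earlier corollary and treat nonemptiness of the $\Phi_I(x)$ as implicitly assumed. With these fibre properties in hand, Lemma~\ref{lem:intersection_lemma} gives a point in $T_0\cap\dots\cap T_\ell$. That point is $(e,e)$ with $e\in\overline{\Phi}_I(e)$ for all $I$, so $e$ is a $k$-lateral equilibrium of $\Ecal^{(\pi(e))}$.
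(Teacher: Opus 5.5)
\textbf{Overall.} Your outline is the paper's own proof: the same Kneser-clique partition, the same refined criterion, the same sets $T_0,\dots,T_\ell$, the same diagram chase, and Vietoris applied to $\mu$ and $\pi$. Your product description $\overline{\Phi}_{\mathcal{P}_j}(x)=\prod_{I\in\mathcal{P}_j}\Phi_I(x)\times E^{(b)}_{-\bigcup\mathcal{P}_j}$ is sharper than the paper's ``non-empty intersection of convex sets''. But it does not prove the statement as written, for the reasons below.

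\textbf{The two caveats are real hypotheses.} The caveats you raise at the end cannot simply be read into the statement. Without them the corollary is false, and the paper's proof uses both silently. Your earlier claim that $\Phi_I(x)$ is nonempty ``by compactness and continuity'' is wrong for $k\ge 2$. Compactness only makes each $\operatorname{argmax}_{y_I}\theta_i(y_I,x_{-I})$ nonempty; it does not give a common maximizer.

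Here is a counterexample. Take $p=2$, $B=\mathbb{R}P^d$ with $d\ge 2$, $N=k=2$, $V_1=V_2=\gamma$ the tautological line bundle, and $E_i$ its unit disk bundle. Set $\theta_1(v_1,v_2)=-\|v_2\|^2$ and $\theta_2(v_1,v_2)=\langle v_1,v_2\rangle$.
\begin{itemize}
\item Each $\theta_i$ is quasi-concave in its own variable (constant, resp.\ linear).
\item $\xi(2,2)=1$ and $e(V_1)e(V_2)=w_1(\gamma)^2\neq 0$.
\item A $2$-lateral equilibrium must maximize $\theta_1$ on the whole fibre. This forces $v_2=0$, and then $\theta_2=0<1=\max\theta_2$.
\end{itemize}
Here $\Phi_{\{1,2\}}(x)=\emptyset$ for every $x$, so $T_1=\emptyset$. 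Hence two things must be added as hypotheses: quasi-concavity of $\theta_i(\cdot,x_{-I})$ on all of $E_I$, which is what Lemma~\ref{lem:MultilateralBestReplyConvex} actually needs, and nonemptiness of every $\Phi_I(x)$. Joint concavity alone still does not give nonemptiness. For instance, take $N=3$, $E_2=E_3$, $\theta_1=-\|x_2\|^2$ and $\theta_2=-\|x_2-x_3\|^2$; then $\Phi_{\{1,2\}}(x)=\emptyset$ whenever $x_3\neq 0$.

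\textbf{The Euler-class exponent.} Your phrase ``or the monomial in the statement'' does not close the gap in this step. Lemma~\ref{lem:intersection_lemma} is applied to $V\times_B V\cong V\oplus V$ with $\ell+1$ sets. It therefore requires $e(V\oplus V)^{\ell}=e(V)^{2\ell}\neq 0$, which does not follow from $e(V)^{\ell}\neq 0$. For example, take all $V_i=\gamma$ over $\mathbb{R}P^d$ with $N\ell\le d<2N\ell$. Theorem~\ref{thm:Main_result_01} correctly pairs two sets with $e(V)^2$; Theorem~\ref{thm:Main_result_02} and the paper's proof of this corollary drop the factor $2$.

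\textbf{How to reach the stated exponent.} Under your added fibre assumptions you can bypass the lemma.
\begin{itemize}
\item Let $\Gamma:=\{(x,y_1,\dots,y_\ell): y_j\in\overline{\Phi}_{\mathcal{P}_j}(x)\}$, a subset of the $(\ell+1)$-fold fibre product of $E$. It is compact by your closed-graph argument.
\item $\Gamma\to E\to B$ is a composite of Vietoris maps: the fibres $\prod_j\overline{\Phi}_{\mathcal{P}_j}(x)$ and $E^{(b)}$ are nonempty, compact and convex. So $H^*(B;\mathbb{F}_p)\to\check{H}^*(\Gamma;\mathbb{F}_p)$ is injective.
\item $(x,y)\mapsto(y_j-x)_{j=1}^{\ell}$ is a section of the pullback of $V^{\oplus\ell}$ to $\Gamma$. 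If it never vanished, the pullback of $e(V)^{\ell}$ to $\Gamma$ would be zero, contradicting injectivity.
\item A zero of this section is exactly a common fixed point of the $\overline{\Phi}_{\mathcal{P}_j}$, i.e.\ a $k$-lateral equilibrium.
\end{itemize}
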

\begin{proof}
	Let $\ell:=\xi(N,k)$.
	Choose an existing partition $\{\mathcal{P}_1,\dots,\mathcal{P}_{\ell}\}$ of the family $\binom{[N]}{k}$ which has the property that any two $k$-element subsets of $[N]$ belonging to the same element of partition, say $\mathcal{P}_i$, must be disjoint.

	\smallskip
	According to the criterion of Theorem \ref{thm:criterion_011}: There exists  $b\in B$ such that  $\Ecal^{(b)}$ from the family $\Ecal/B$ has a $k$-lateral Nash equilibrium if and only if there exists  $e\in \pi^{-1}(\{b\})$ such that
	\[
		\bigcap_{I\in\binom{[N]}{k}}\big(\bigcup_{\epsilon\in E}\widetilde{\Phi}_I (\epsilon)\big) \ \cap \ \bigcup_{\epsilon\in E} \Delta(\epsilon)\neq \emptyset.
	\]
	Since the sequence of equalities holds
	\[
		\bigcap_{I\in\binom{[N]}{k}}\big(\bigcup_{\epsilon\in E}\widetilde{\Phi}_I (\epsilon)\big)=
		\bigcup_{\epsilon\in E}\big( \bigcap_{I\in\binom{[N]}{k}}\widetilde{\Phi}_I (\epsilon)\big)=
		\bigcup_{\epsilon\in E}\big( \bigcap_{i=1}^{\ell}\bigcap_{I\in\mathcal{P}_i}\widetilde{\Phi}_{I} \big)=
		\bigcup_{\epsilon\in E}\big( \bigcap_{i=1}^{\ell} \widetilde{\Phi}_{\mathcal{P}_i}\big)=
		\bigcap_{i=1}^{\ell}\big(\bigcup_{\epsilon\in E} \widetilde{\Phi}_{\mathcal{P}_i}\big)
	\]
	the proof of Corollary \ref{cor:criterion_021} yields the following refined criterion: There exists a parameter $b\in B$ such that  $\Ecal^{(b)}$ from the family $\Ecal/B$ has a $k$-lateral Nash equilibrium if and only if
	\begin{displaymath}
		\bigcap_{i=1}^{\ell} \Pi\big(\bigcup_{\epsilon\in E} \widetilde{\Phi}_{\mathcal{P}_i}\big)\ \cap \ \Pi\big(\bigcup_{\epsilon\in E} \Delta(\epsilon)\big)\neq \emptyset.
	\end{displaymath}

	\smallskip
	Now, we mimic the proof of Corollary \ref{cor:MainResultMultilateralCorollay} using the criterion we just derived in place of Corollary \ref{cor:criterion_021}.
	In order to use Lemma \ref{lem:intersection_lemma}, like in the proof of Theorem \ref{thm:Main_result_02}, it suffices to prove that the map in cohomology
	\[
		\xymatrix{
		H^{\dim(B)}(B;\mathbb{F}_p)\ar[rr]^-{(\pi\times_B\pi)^*} & & 	H^{\dim(B)}\big(\Pi\big(\bigcup_{e\in E}\widetilde{\Phi}_{\mathcal{P}_i} (e) \big) ;\mathbb{F}_p\big)
		}
	\]
	is injective for every $1\leq i\leq \ell$.
	Again consider a derivative of the commutative diagram \eqref{eq:pull-back}:
	\begin{equation*}\label{eq:pull-back-04}
		\xymatrix{
		\bigcup_{e\in E}\widetilde{\Phi}_{\mathcal{P}_i} (e)\ar[rr]^-{\Pi}\ar[d]^{\mu} & & \Pi\big(\bigcup_{e\in E}\widetilde{\Phi}_{\mathcal{P}_i} (e) \big)\ar[d]^{\pi\times_B\pi}\\
		E\ar[rr]^-{\pi}& & B.
		}
	\end{equation*}
	The cohomology functor $H^*(\cdot\ ;\mathbb{F}_p)$ yields the commutative diagram:
	\begin{equation*}\label{eq:pull-back-05}
		\xymatrix{
		H^*(\bigcup_{e\in E}\widetilde{\Phi}_{\mathcal{P}_i} (e);\mathbb{F}_p)  & &\ar[ll]_-{\Pi^*} H^{*}\big(\Pi\big(\bigcup_{e\in E}\widetilde{\Phi}_{\mathcal{P}_i} (e) \big) ;\mathbb{F}_p\big)\\
		H^*(E;\mathbb{F}_p) \ar[u]_{\mu^*}& & \ar[ll]_{\pi^*} H^*(B;\mathbb{F}_p)\ar[u]_{(\pi\times_B\pi)^*}.
		}
	\end{equation*}
	Since $\Pi^*\circ ( \pi\times_B\pi)^*=\mu^*\circ\pi^*$, the injectivity of $( \pi\times_B\pi)^*$ follows from the injectivity of the composition $\mu^*\circ\pi^*$.
	Hence, we analyze the homomorphisms $\mu^*$ and $\pi^*$.

	\smallskip
	The quasi-concavity of the functions $\theta_i^{(b)}(\cdot, x_{-i})\colon E_i\longrightarrow\mathbb{R}$, according to Lemma \ref{lem:MultilateralBestReplyConvex}, yields the convexity of the best-reply correspondence $\Phi_I(e)$ and, therefore, the set-valued section $\widetilde{\Phi}_I (e)$ for every $e\in E$.
	Since $\widetilde{\Phi}_{\mathcal{P}_i} (e)=\bigcap_{I\in \mathcal{P}_i}\widetilde{\Phi}_I (e)$, and a non-empty intersection  f convex sets is always convex, the set-valued section $\widetilde{\Phi}_{\mathcal{P}_i}$ is also convex valued.
	Therefore, the map $\mu$ is a Vietoris map which, by Vietoris' theorem \cite{Vietoris1927}, induces an isomorphism $\mu^*\colon H^*(E;\mathbb{F}_p)\longrightarrow H^*(\bigcup_{e\in E}\widetilde{\Phi}_{\mathcal{P}_i} (e);\mathbb{F}_p)$ in all degrees.

	\smallskip
	On the other hand, as we have already seen, from convexity  of fibers in $E_1,\dots,E_N$ follows the convexity of fibers in $E$, which implies that $\pi$ is a Vietoris map as well.
	Hence, $\pi^*\colon H^*(B;\mathbb{F}_p)\longrightarrow H^*(E;\mathbb{F}_p)$ is an isomorphism.

	\smallskip
	In summary, $( \pi\times_B\pi)^*$ is injective, and so we completed the proof.
\end{proof}

\section{Examples}

In this section we present different examples of family of games which satisfy assumptions of Corollary \ref{cor:MainResultMultilateralCorollay} and/or  Corollary \ref{cor:Main_result_01_02}.

\subsection{The first simple example}
For an integer $d\geq 1$ let $\R P^d$ denote the real projective space of dimension $d$.
Set $V^d:=\{(L,v)\in \R P^d\times\R^{d+1} : v\in L\}$ to be the total space of the tautological vector bundle over $\R P^d$.

\medskip
Let $N\geq 1$ and $1\leq k\leq N$ be integers, and let the family of games $\Ecal/\R P^d$ be defined as follows.
Suppose each strategy space $E_i$ is a subbundle of the vector bundle $V^{d}$ with compact and convex fibers.
Assume that the profit functions $\theta_i^{(L)}(\cdot, x_{-i})\colon E_i\longrightarrow\mathbb{R}$ are quasi-concave for every $L\in \R P^d$, every $i\in [N]$ and every fixed $x_{-i}\in E_{-i}$.

\medskip
According to Corollary \ref{cor:Main_result_01_02} and the well known properties of the Stiefel--Whitney classes of tautological vector bundles over a real projective space \cite[Ch.\,4]{Milnor1974}, if $n\cdot \xi(n,k)\leq d$, then there exists a line  $L\in \R P^d$ such that the non-cooperative game $\Ecal^{(l)}$ from the family $\Ecal/\R P^d$ has a $k$-lateral Nash equilibrium.

\subsection{Finite Games}

Finite non-cooperative games have been considered ever since the original definition of a non-cooperative game by John Nash in his PhD thesis~\cite{NashPhD}. In this case, the word \textit{finite} refers to the initial assumption that the strategy spaces $E_1,\dots, E_N$ are finite sets.
Within the framework of finite games, these finitely many strategies, belonging to the global strategy space $E:=E_1\times\dots\times E_N$ are typically referred to as \textit{pure strategies}.

\medskip
However, not all finite games have equilibria, and the assumption of a finitely many strategies quickly turned out to be rather restrictive in terms of modelling possibilities.
The most well-known addition to finite games are the so-called mixed strategies.

\medskip
Before getting to common interpretations of mixed strategies, let us introduce them in a precise way.
Assume as usual that there are $N$ players and that player $i$ has $d_i$ pure strategies available to him.
Then his strategy space $E_i$ becomes the space of probability distributions $\pi^{(i)}$ on $d_i$ elements instead of taking for $E_i$ a finite set of cardinality $d_i$.
In other words, $E_i=\Delta_{d_i-1}:=\operatorname{conv}\{e_1,\dots,e_{d_i}\}$ is the standard simplex on $d_i$.
Here $e_1,\dots,e_{d_i}\in\mathbb{R}^{d_i}$ denote the canonical basis vectors in $\mathbb{R}^{d_i}$.
Now, the vertices of $\Delta_{d_i-1}$ represent pure strategies.
If a strategy $\pi^{(i)}\in\Delta_{d_i-1}$ also satisfies $\pi^{(i)}\in\operatorname{relint}\big(\operatorname{conv}\{e_{k_1},e_{k_2},\dots,e_{k_l}\}\big)$ for some subset $\{e_{k_1},e_{k_2},\dots,e_{k_l}\}$ of vertices, then we say that $\pi^{(i)}$ is a (genuine) mixture of pure strategies $\{e_{k_1},e_{k_2},\dots,e_{k_l}\}$.

\medskip
The profit functions, which were originally only defined for pure strategies, are extended to mixed strategies affinely.
More precisely, the payoffs of the pure strategies for player $i$ are recorded in $X_{i}\in\mathbb{R}^{d_1 + d_2 + \dots + d_N}$, the so-called pure strategy payoff (tensor) vector, which is given by
\begin{displaymath}
	X_{i}(j_1,\dots,j_N) = \theta_i(e_{j_1},\dots,e_{j_N}), \quad \text{for} \quad 1\leq j_k\leq d_k,\ 1\leq k\leq N.
\end{displaymath}
Then the affine extension of the pure-strategy profit functions is given as follows
\begin{equation}\label{eq:def:FiniteGameProfitFunction}
	\theta_i(\pi^{(1)}, \pi^{(2)}, \dots, \pi^{(N)}) := \sum_{j_1=1}^{d_1}\sum_{j_2=1}^{d_2}\cdots\sum_{j_N=1}^{d_N}\pi_{j_1}^{(1)}\cdot\pi_{j_2}^{(2)}\cdots\pi_{j_N}^{(N)}\cdot X_{i}(j_1,\dots,j_N),
\end{equation}
where $\pi^{(i)}\in\Delta_{d_i-1}\subseteq\mathbb{R}^{d_i}$. So
\begin{displaymath}
	\theta_i\colon \Delta_{d_1-1}\times \Delta_{d_2-1}\times\cdots\times\Delta_{d_N-1}\longrightarrow\mathbb{R}.
\end{displaymath}
Note that the profit functions are multilinear polynomials in $d_1 + d_2 + \cdots + d_N$ variables. If we treat $\theta_i$ as a function of $\pi^{(i)}$ and fix all other arguments $\pi^{(1)},\dots,\pi^{(i-1)},\pi^{(i+1)},\dots,\pi^{(N)}$, then $\theta_i$ is linear, since the coordinates of $\pi^{(i)}$ appear only once in every monomial.

\medskip
Mixed strategies originally appeared as an intermediate step in a proof that  finite games have pure-strategy Nash equilibrium, but on the other hand they also have realistic interpretations.
A good example to illustrate value of mixed strategies can be found in the well-known classical \textit{Inspection Game}.
Our presentation here is based on the classical book by Fudenberg and Tirole~\cite[Ex.\,1.7]{FudenbergTirole1991}.
The game models a common dilemma in real life which arises amongst boss $(B)$ and employee $(E)$: The boss hires an employee for a wage $w\in\mathbb{R}_{>0}$ in return for an amount of value $v\in\mathbb{R}$.
The employee has the option to work for a personal effort of $g>0$, where $g < w$, or not to work at no personal cost.
The boss has the option to inspect the employee at a cost $h>0$, where $h < w$ or not to inspect without cost.
If the employee is caught slacking, then the boss does not pay the employee the wage $w$.
The boss is not allowed to condition the payment of the wage on the output of the employee, and the employee and boss are assumed to make the choice simultaneously.
This situation is a $2$-player non-cooperative game with payoff (tensor) vector organised in the following matrices:
\begin{displaymath}
	X_{(E)} = \begin{pmatrix}
		0     & w     \\
		w - g & w - g
	\end{pmatrix}, \qquad X_{(B)} = \begin{pmatrix}
		-h    & -w  \\
		v-w-h & v-w
	\end{pmatrix}.
\end{displaymath}
Here the choice of the employee is not to work in the top row and to work in the bottom row, while the columns represent the choice of the boss, with the left column representing the boss opting to inspect the employee and the right column representing the boss opting not to inspect the employee.
It is quite easy to see that this game does not have a pure-strategy Nash equilibrium, simply by observing that in every case one player would prefer to change his strategy.

\medskip
Nonetheless, there does exist a mixed-strategy Nash equilibrium.
In this model, the mixed strategy represents the assumption that the same game is played multiple times without memory and that both players randomize their strategy.
When allowing the players to randomize their strategies, Fudenberg and Tirole show that the (totally mixed) Nash equilibrium of the Inspection game is given by the employee not working with probability $g/w$ and the boss inspecting with probability $h/w$.
Note that curiously, the equilibrium point does not depend on the value $v$ of the work.

\medskip
In general, a finite game is completely defined by the integers $d_1,\dots,d_N\in\mathbb{N}$ and the payoff vectors $X_1,\dots,X_N\in\mathbb{R}^{d_1+\dots+ d_N}$. So set $B_N:= \{(\mathbf{d}, \mathbf{X})\colon \mathbf{d}\in\mathbb{N}^N, \mathbf{X}\in\mathbb{R}^{d_1+\dots+d_N}\}$.
Then all finite $N$-player games constitute a family of games over the parameter space $B_N$, which we denote by $\mathcal{F}/B_N$. We have for $b=(\mathbf{d},\mathbf{X})\in B_N$ and $1\leq i\leq N$ that $\mathcal{F}^{(b)}_i = \Delta_{d_i}$ and the profit function of player $i$ is defined in~\eqref{eq:def:FiniteGameProfitFunction}.

\medskip
Since the breakthrough of Nash, the study of finite (non-cooperative) games expanded in many directions, which inspired further extensions, generalizations, applications and ultimately shaped the entirety of game theory in a profound way.
For that reason it is not possible to give a concise, complete and comprehensive history of the research related to finite games.
Instead, we recall results relevant to the study of existence of multilateral Nash equilibria.

\medskip
The one line of research was motivated by the question: How many Nash equilibria can a finite game have? Or even better, what is the set of all equilibria?
The famous result of Nash \cite{Nash1951} says that a finite game has at least one equilibrium.
Wilson \cite[Thm.\,1]{Wilson1971} in 1971 showed that a ``generic'' finite game always has an odd number of Nash equilibria.
In parallel, Harsanyi in his work \cite{Harsanyi1973} from 1973 showed that every finite game with a ``generic'' payoff vector has finitely many equilibria.
Here generic means all payoff vector except a set whose closure has Lebesgue measure zero.
The tight upper bound for the number of equilibria for the games with the generic payoff was given by McKelvey and McLennan in \cite{McKelveyMcLennan1997}.
The study of ``non-generic'' finite games was featured, for example, in works of Sturmfels \cite{Sturmfels2002}, Datta \cite{Datta2003a,Datta2003b,Datta2010}, and recently in work of Portakal \& Sturmfels \cite{PortakalSturmfels2022}, Abo, Portakal \& Sodomaco \cite{AboPortakalSodomaco2025}.
Amongst many results, we recall only the result \cite[Thm.\,1]{Datta2003b} which says:
\begin{theorem*}
	Every real algebraic variety is isomorphic to the set of (totally mixed) Nash equilibria of some $3$-person game, and also of an $N$-person game in which each player has two pure strategies.
\end{theorem*}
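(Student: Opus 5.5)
The statement is the cited result \cite[Thm.\,1]{Datta2003b}, and I would prove it by turning an arbitrary real algebraic variety into a system of indifference equations.

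\emph{Starting point.} Take a totally mixed profile $(\pi^{(1)},\dots,\pi^{(N)})$, with all coordinates positive. It is a Nash equilibrium if and only if, for each player $i$, all pure strategies of $i$ earn the same expected payoff against $\pi^{(-i)}$. By \eqref{eq:def:FiniteGameProfitFunction}, each such condition
\[
\sum X_i(j,\cdot)\prod_{m\neq i}\pi^{(m)}=\sum X_i(j',\cdot)\prod_{m\neq i}\pi^{(m)}
\]
is a multilinear polynomial equation in the \emph{other} players' probabilities. Its coefficients are differences of payoff entries, so they can be chosen freely. The set of totally mixed equilibria is therefore the intersection of:
\begin{compactitem}[\quad --]
\item the open product of simplices,
\item for each player $i$, $d_i-1$ freely prescribable multilinear equations not involving player $i$'s own variables.
\end{compactitem}
The task is to show that every real algebraic variety $W\subseteq\R^n$ can be put in this normal form up to isomorphism.

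\emph{Preprocessing of the variety.}
\begin{compactenum}[\quad (1)]
\item Replace $W$ by an isomorphic bounded variety. For example, use the regular embedding $x\mapsto \big(x,1\big)/(1+|x|^2)$, whose inverse is regular on the image.
\item Apply an affine change of coordinates so that the image lies in a small open box around an interior point of the positive orthant.
\item Lower the degrees by introducing auxiliary variables for monomials, such as $y=x_ax_b$. This gives an isomorphic variety cut out by equations that are each bilinear in two disjoint blocks of variables, or trilinear in three blocks. These are graph-type isomorphisms.
\item Homogenize each block with a slack coordinate so that every block sums to one. The block then becomes a probability vector in a simplex, with all coordinates positive because of the box condition in step (2).
\end{compactenum}

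\emph{Three players.} Give players $2$ and $3$ each a copy of all the (homogenized) variables. Use player $1$'s indifference equations, which are bilinear in $(\pi^{(2)},\pi^{(3)})$, to impose the defining bilinear equations of $W$, with one variable taken from each copy. Use player $2$'s and player $3$'s equations to force the two copies to coincide, and to pin player $1$'s mixture to a fixed value or to a function of the variables. If player $1$ needs more equations than available strategies, pad it with extra pure strategies. Each extra strategy adds one equation, but it also adds one variable that must be pinned down by the other players' equations. Padding therefore has to be balanced, and I would do this by adding trivial equations of the form ``variable $=$ constant''.

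\emph{Binary players.} In the $N$-person case with two strategies each, every player carries a single variable $p_i$ and contributes exactly one multilinear equation in the other variables. I would create one player per variable and per auxiliary product variable. Each auxiliary player $z$ representing a product $x_ax_b$ has its own equation $z=x_ax_b$ assigned to some other player whose equation is otherwise free. The equations of $W$ are distributed among further dummy players, whose own variables are then fixed by equations imposed on yet other players.

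\emph{Main obstacle.} The hardest step is this bookkeeping: each player's equation must avoid that player's own variable, and the number of equations must match the number of variables, so that the equilibrium set is exactly $W$ and not a larger set with free dummy coordinates. I would first try to settle this with a bipartite assignment, pairing each equation with a player not occurring in it and fixing every surplus variable by a linear equation of the form $p=c$.
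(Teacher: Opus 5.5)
The paper gives no proof of this statement; it only quotes Datta \cite[Thm.\,1]{Datta2003b}. Your sketch therefore has to stand on its own, and it breaks at the bookkeeping you postpone: the mechanism you propose there cannot work. In both constructions the unknowns and the equation slots are equinumerous, and every extra pure strategy or extra binary player adds exactly one of each. Every pin $p=c$, every copy equality and every auxiliary definition $z=x_ax_b$ uses up one slot for the one unknown it determines. Hence, however you pad, if your presentation of $W$ lives in $\R^{n'}$ and needs $r'$ equations, exactly $n'$ slots remain for those $r'$ equations. None of your steps (1)--(4) changes $n'-r'$, since each new coordinate ($t$, a product variable, a slack) comes with exactly one new equation or simplex constraint. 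So whenever $W\subseteq\R^n$ is presented by $r>n$ polynomials, your dummy players lead to an infinite regress and your padding never balances. The missing step, which uses that we work over $\R$, is to collapse the system first. Replace $g_1=\dots=g_r=0$ by the single equation $\sum_k g_k^2=0$, and only then reduce degrees. One slot then suffices for $W$, and the remaining slots can carry trivial (constant-payoff) equations.

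Two further steps fail as written.

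\emph{Copy coincidence in the $3$-player case.} Player $2$'s indifference equations are bilinear in $(\pi^{(1)},\pi^{(3)})$ and never involve $\pi^{(2)}$, and symmetrically for player $3$. So neither can impose $y=z$ between the copies carried by $\pi^{(2)}$ and $\pi^{(3)}$; only player $1$ relates these two blocks. You must do one of the following:
\begin{itemize}
\item let player $1$ hold $y=z$ as well, and have players $2$ and $3$ pin all of $\pi^{(1)}$;
\item let $\pi^{(1)}$ carry a third copy $w$, with player $3$ imposing $y=w$ and player $2$ imposing $z=w$. Then $\pi^{(1)}$ can no longer be pinned to a constant.
\end{itemize}
Either way the slot count of the previous paragraph is unchanged.

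\emph{The open region.} The totally mixed equilibria are the zero set of the indifference equations intersected with the \emph{open} product of simplices, and this intersection must equal the image of $W$, not merely contain it. Under $x\mapsto(x,1)/(1+|x|^2)$ the image is the part with $t>0$ of the common zero set of $|y|^2+t^2-t$ and the homogenized polynomials $t^{\deg g_k}g_k(y/t)$. For nonconstant $g_k$ this zero set always contains the origin. For unbounded $W$ that point lies in the closure of the image, so an affine change as in your step (2) puts it into the open positive orthant. The equilibrium set then becomes the one-point compactification of $W$; for bounded $W$ you risk an extra isolated point. The fix is to keep $t$ itself, up to a positive factor, as one of the probability coordinates, so that total mixedness imposes exactly $t>0$. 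Shift and scale only the other coordinates, so that they and all slacks are positive on the whole sphere.
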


\medskip
Since a ``generic'' finite game has finitely many Nash equilibria it is highly unlikely that any of them is a multilateral one.
On the other hand, by the quoted result of Datta \cite[Thm.\,1]{Datta2003b}, any real algebraic variety is a set of Nash equilibria of some finite game.
Thus, it is to be expected that the filtration of the variety by multilateral equilibria \eqref{eq:filtration} is non-trivial.
Hence, it is natural to ask: Does this family $\mathcal{F}/B_N$ contain games with $k$-lateral equilibria?

\medskip
Having in mind the result of Datta on the so-called universality of $3$-person games, in the following we construct a $3$-player finite game which has $2$-lateral Nash equilibrium.

\medskip
To simplify our presentation, we borrow the notation of Sturmfels \cite{Sturmfels2002} and denote the $3$ players by $A$, $B$, and $C$.
All strategy spaces coincide $E_A=E_B=E_C=\Delta_1$ with the $1$-dimensional standard simplex $\Delta_1=\{(y_1,y_2)\in\mathbb{R}^2 : y_1\geq 0, y_2\geq 0, y_1+y_2=1\}$.
Denote the choices of the players $A$, $B$, $C$ with $a$, $b$, $c$ respectively.
Now, we construct the profit vector
\begin{equation}\label{eq:profit_vector}
	\mathcal{X}:=(X_A(i,j,k), X_B(i,j,k), X_B(i,j,k) : i,j,k\in \{0,1\})\ \in \mathbb{R}^{3\cdot 8}
\end{equation}
such that the game has a $2$-lateral equilibrium.
The main difficulty, compared to the classical Nash equilibrium, arises from the fact that for a $2$-lateral Nash equilibrium we are no longer optimizing linear functions but instead quadratic functions.

\medskip
Indeed, the strategy $(a,b,c)\in \Delta_1\times\Delta_1\times\Delta_1$ is a $2$-lateral Nash equilibrium if for all $u=(u_1,u_2)\in\Delta_1$ and all $v=(v_1,v_2)\in \Delta_1$ the following inequalities are satisfied

\begin{align*}
	\sum_{i,j,k\in \{0,1\}} X_A(i,j,k) a_ib_jc_k & \geq \sum_{i,j,k\in \{0,1\}} X_A(i,j,k) u_iv_jc_k, \\
	\sum_{i,j,k\in \{0,1\}} X_B(i,j,k) a_ib_jc_k & \geq \sum_{i,j,k\in \{0,1\}} X_B(i,j,k) u_iv_jc_k, \\
	\sum_{i,j,k\in \{0,1\}} X_A(i,j,k) a_ib_jc_k & \geq \sum_{i,j,k\in \{0,1\}} X_A(i,j,k) u_ib_jv_k, \\
	\sum_{i,j,k\in \{0,1\}} X_C(i,j,k) a_ib_jc_k & \geq \sum_{i,j,k\in \{0,1\}} X_C(i,j,k) u_ib_jv_k, \\
	\sum_{i,j,k\in \{0,1\}} X_B(i,j,k) a_ib_jc_k & \geq \sum_{i,j,k\in \{0,1\}} X_B(i,j,k) a_iu_jv_k, \\
	\sum_{i,j,k\in \{0,1\}} X_C(i,j,k) a_ib_jc_k & \geq \sum_{i,j,k\in \{0,1\}} X_C(i,j,k) a_iu_jv_k.
\end{align*}
Equivalently, for all $(u_1,u_2),(v_1,v_2)\in \Delta_1$ it should hold that
\begin{align}\label{ineq:2lateralCondition}
	\sum_{i,j,k\in \{0,1\}} X_A(i,j,k) c_k(a_ib_j - u_iv_j)  & \geq 0, \nonumber \\
	\sum_{i,j,k\in \{0,1\}} X_B(i,j,k) c_k(a_ib_j - u_iv_j)  & \geq 0, \nonumber \\
	\sum_{i,j,k\in \{0,1\}} X_A(i,j,k) b_j(a_ic_k - u_iv_k)  & \geq 0, \nonumber \\
	\sum_{i,j,k\in \{0,1\}} X_C(i,j,k) b_j(a_ic_k - u_iv_k) & \geq 0, \nonumber \\
	\sum_{i,j,k\in \{0,1\}} X_B(i,j,k) a_i(b_jc_k - u_jv_k) & \geq 0, \nonumber \\
	\sum_{i,j,k\in \{0,1\}} X_C(i,j,k) a_i(b_jc_k - u_jv_k) & \geq 0.
\end{align}

\medbreak
Now, let us inspect whether there exists a profit vector \eqref{eq:profit_vector}, or in other words $3$-player finite game, such that $a=b=c=(0,1)\in\Delta_1$ is a $2$-lateral Nash equilibrium of the finite game associated to this profit vector.
Hence, choosing $a$, $b$ and $c$ to be constant, the system of inequalities \eqref{ineq:2lateralCondition} becomes
\begin{align}\label{ineq:02}
	-X_A(1,1,2)u_1v_1  -X_A(1,2,2)u_1v_2 -X_A(2,1,2)u_2v_1+X_A(2,2,2)(1-u_2v_2) & \geq 0,\nonumber \\
	-X_B(1,1,2)u_1v_1  -X_B(1,2,2)u_1v_2 -X_B(2,1,2)u_2v_1+X_B(2,2,2)(1-u_2v_2) & \geq 0,\nonumber \\
	-X_A(1,2,1)u_1v_1  -X_A(1,2,2)u_1v_2 -X_A(2,2,1)u_2v_1+X_A(2,2,2)(1-u_2v_2) & \geq 0,\nonumber \\
	-X_C(1,2,1)u_1v_1  -X_C(1,2,2)u_1v_2 -X_C(2,2,1)u_2v_1+X_C(2,2,2)(1-u_2v_2) & \geq 0,\nonumber \\
	-X_B(2,1,1)u_1v_1  -X_B(2,1,2)u_1v_2 -X_B(2,2,1)u_2v_1+X_B(2,2,2)(1-u_2v_2) & \geq 0,\nonumber \\
	-X_C(2,1,1)u_1v_1  -X_C(2,1,2)u_1v_2 -X_C(2,2,1)u_2v_1+X_C(2,2,2)(1-u_2v_2) & \geq 0.
\end{align}
It is evident that the set of profit vectors \eqref{eq:profit_vector} in $\mathbb{R}^{3\cdot 8}$ which satisfy inequalities \eqref{ineq:02} is non-empty and it is also clearly infinite.

\medskip
In summary, in the case of finite games, the phenomena of multilateral equilibria are not rare and should be studied systematically.
Consequently, all questions posed and considered for Nash equilibria of finite games now get a new life in the context of multilateral Nash equilibria.

\subsection{Microeconomy}

Inspired by the classical Nobel-prize winning work of Kenneth Arrow and G\'erard Debreu \cite{ArrowDebreu1954}, we build a model of the state economy consisting of $N$ producers and $M$ consumers.
For a similar approach to microeconomy, also compare the classical books by Henderson and Quandt~\cite{HendersonQuandt1958} and Baumol~\cite{Baumol1965}.
However, the models presented in these books previous economic models mostly dealt with producers, consumers and the state separately. Instead, Arrow and Debreu unified the three economic categories into a single model, which they called \textit{the} Abstract Economy. Our players also consist of the producers and consumers, but the state is given a different task.
We develop a family of $N+M$-player games parameterized by the Cartesian product of two infinite Grassmanians $G_n(\mathbb{R}^{\infty})\times G_m(\mathbb{R}^{\infty})$ as our model. For more background information on the topological properties of the Grassmanian, see~\cite[Sec.\,5]{Milnor1974}. 

\medskip
The role of the producers in an economy is to create the commodities sold on the market. Any company or individual selling any commodity (we treat services as commodities) is a producer in the eyes of economy. The commodities are created from certain inputs, collectively referred to as \textit{production factors}. Examples of production factors include raw materials, infrastructure, labor, entrepreneurship, etc. We do not concern ourselves with the specific economic theory of production factors, rather we just assume that all possible production factors $(\xi_i : i\in\mathbb{N})$ are enumerated. We choose the symbol $\xi_i$ because the classical notation of $x_i$ for production factors is already reserved for the strategy of player $i$.

The consumers within the economy purchase the commodities and consume them with the goal of satisfying their personal needs and gaining satisfaction. For example, consumers may purchase food, housing, and energy, but also luxury or convenience items and services. All the possible benefits which the consumer gains from the commodities he purchases are collectively referred to as \textit{utility} and the goal of every consumer is to maximize his utility. 
Let the possible commodities $(q_i :i\in\mathbb{N})$ also be enumerated. Here we use the classical notation of $q_i$ for commodity $i$.
In particular, for simplicity reasons, we are making an assumption that there is a countably infinite number of possible products and a countably infinite number of possible commodities.

\medskip
Each producer $i\in[n]$ has a finite budget, so he can only afford a bounded subset of $\mathbb{R}^{\infty}$ of production factors $\mathcal{B}_i\subseteq\mathbb{R}^{\infty}$.
We make the assumption that each $\mathcal{B}_i$ is convex, since if the producer can afford two combinations of production factors, then he can also afford any convex combination.
Additionally, without loss of generality, we assume that $0\in \operatorname{int}(\mathcal{B}_i)$ for every $i\in[n]$.

\medskip
In our model the state is responsible for ensuring the availability of production factors and for planning the production cycle, so it is somewhat akin to socialism. The state can do this by building infrastructure, urbanistic design, political and economic planning, legislature, etc.
The task of the state is reflected in the model by supposing that the state selects an $n$-dimensional subspace $U\subseteq \mathbb{R}^{\infty}$ of production factors.
We assume that the producer is allowed to buy any linear combination of available production factors within his budget, so the choice of the producer $i$ is an element of the set $E_i^U:=U\cap\mathcal{B}$.

\medskip
Similarly, we assume that the state can plan production for an $m$-dimensional subspace $V\subseteq\mathbb{R}^{\infty}$ of commodities.
The consumers have finite budgets, so say that consumer $j$ for $n+1\leq j\leq n+m$ has budget space $\mathcal{C}_j\subseteq\mathbb{R}^{\infty}$.
By the same logic as before, we may assume that $\mathcal{C}_j$ is convex and  $0\in \operatorname{int}(\mathcal{C}_j)$.
Each consumer is allowed to buy any linear combination of commodities he can afford, so consumer $j$ strategy space is $E_j^V:=\mathcal{C}_j\cap V$.

Note that in our model the state does not have the power to set market prices. This is arguably more realistic compared to the original Abstract Economy, where it was assumed that the state set market prices. Instead, we will use a well-known, classical economic model of market prices.

\medskip
First, let us deal with the producers.
The production function of producer $i$ for commodity $j$ is given as $f_{i,j}\colon \mathbb{R}^{\infty}\longrightarrow[0,\infty)$.
It depends on the production factors $\xi^{(i)}:=(\xi_{j}^{(i)} : j\in\mathbb{N})$ taken as input by producer $i$.
We assume that all production factors are completely consumed during production and that the producer never incurs losses due to wasted production factors.
The production function $f_{i,j}$ is assumed to be continuous and twice differentiable, and it must satisfy the \emph{Law of diminishing marginal productivity} of production factors~\cite[Sec.\,3.1]{HendersonQuandt1958}.
This means that the production function must be concave.
Furthermore, with no production factors, the producer cannot produce, which means that necessarily $f_{i,j}(0)=0$. And finally, $f_{i,j}$ is non-decreasing in every coordinate, meaning if the producer obtains more of a particular resource, all other resources being fixed, then he can also produce more.
As a consequence, all production functions $f_{i,j}$ are concave and nondecreasing in every coordinate, assuming the remaining coordinates are fixed.

\medskip
The producer obtains his revenue by selling the commodities he produces at the market prices.
Denote the market price of commodity $j$ by $p_j$. We assume that $p_j$ obeys the \emph{Cournot competition model} assumptions \cite[Sec.\,6.2]{HendersonQuandt1958}.
Originally, the Cournot competition assumed that the price function $p_j\colon[0,\infty)\longrightarrow[0,\infty)$ of commodity $j$ is a strictly monotonously decreasing, concave function of the state's total production of commodity $j$, which was intended to represent falling prices as demand for a particular commodity is satisfied.
Since our model contains the consumers, unlike the original Cournot competition, we can accurately model that the market price is a strictly monotonously decreasing, concave function of the total demand of a particular commodity as given by consumers' consumption behavior. Since the total demand for commodity $j$ is simply the sum of all consumer's consumption of that commodity $\sum_{k=n+1}^{n+m} q^{(k)}_j$, we conclude that, consequently, producer $i$'s revenue $R_i$ is given by
\begin{displaymath}
	R_i = \sum_{j\in\mathbb{N}} p_j\cdot f_{i,j}
\end{displaymath}
Recall that since the producer only produces commodities from the finite-dimensional subspace $U$, all but finitely many production functions must be zero. This means that the sum in the expression for the revenue $R_i$ is actually finite. We will make it even more precise in a moment after explaining the costs incurred by the producer.

\medskip
Every producer $i$ has a fixed cost independent of his production level, denoted $\operatorname{TFC}_i$ for \textit{total fixed cost}, which accounts for building maintenance, accounting costs, and similar expenses.
Furthermore, each producer $i$ must pay for the production factors he is taking as inputs.
These costs are accounted for by the function $\operatorname{TVC}_i\colon \mathbb{R}^{\infty}\longrightarrow\mathbb{R}$, which stands for \textit{total variable cost}.
Since the prices of the production factors are assumed constant, the functions $\operatorname{TVC}_i$ are linear, given as linear combinations of the quantities of production factors taken as inputs.
The profit of producer $i\in[n]$ is the difference between his revenue and his costs, which is given by
\begin{align*}
	\theta_i^{(U,V)} \colon E_1^U\times\dots\times E_n^U\times E_{n+1}^V\times   E_{n+m}^V\longrightarrow \  & \mathbb{R},                                                                             \\
	(\xi^{(1)},\dots,\xi^{(n)},q^{(n+1)},\dots,q^{(n+m)}) \longmapsto \                                      & \sum_{j\in\mathbb{N}} f_{i,j}(\xi^{(i)})\cdot p_j\Big(\sum_{k=n+1}^{n+m} q^{(k)}_j\Big) \\ & \qquad\qquad\qquad\qquad - \operatorname{TVC}_i(\xi^{(i)}) - \operatorname{TFC}_i.
\end{align*}
According to the product rule, $\theta_i$ is concave as the sum of concave functions.

\medskip
Next, we consider the consumers. We model the total satisfaction of a consumer $n+1\leq j\leq n+m$ using a utility function $u_j$. Since the consumer derives the utility from the commodities they purchased, $u_j\colon \mathbb{R}^{\infty}\longrightarrow \mathbb{R}$ depends on the commodity combination $q^{(j)}$ he consumes.
Strictly speaking, it is not necessary for the consumer to be aware of the specific values of his exact utility function.
It is enough to require that the consumer is able to consistently and continuously decide which of two possible commodity combinations he prefers, or whether he is indifferent.
These decisions induce a preorder on the space of all possible commodity combinations $\mathbb{R}^{\infty}$.
Under these assumptions, Debreu's representation theorem~\cite{Debreu1954} guarantees the existence of a continuous utility function $u_j$ which exactly reflects the consumer's preferences.

\medskip
According to the \emph{Law of diminishing marginal utility}~\cite[Sec.\,2.2]{HendersonQuandt1958} we have that the utility functions are concave
for every consumer $n+1\leq j\leq n+m$.
Thus, the utility functions are also concave in every coordinate.
Finally, like Arrow and Debreu we also allow consumer $j$ to have shares in producer $i$, in which case he gains a percentage $\alpha_{j,i}\in[0,1]$ of his profit. Similarly we allow consumers to share their utility, meaning a purchase by one consumer may to some extent increase the utility of another. 
If consumer $j$ is benefited by consumer $k$'s utility, we use the same notation $\alpha_{j,k}\in[0,1]$ for the percentage increase in $j$'s utility.
In the classical literature these effects are called \textit{external economies}.

Finally, we can express the consumer $j$ profit function as follows:
\begin{align*}
	\theta_j^{(U,V)} \colon E_1^U\times\dots\times E_n^U\times E_{n+1}^V\times   E_{n+m}^V\longrightarrow \  & \mathbb{R},                                                                                     \\
	(\xi^{(1)},\dots,\xi^{(n)},q^{(n+1)},\dots,q^{(n+m)}) \longmapsto \                                      &
	u_j(q^{(j)}) + \sum_{k=n+1}^{n+m} \alpha_{j,k}u_k(q^{(n+1)},\dots,q^{(n+m)})                                                                                                                               \\
	                                                                                                         & \qquad\qquad\qquad\qquad\qquad +\sum_{k=1}^{n} \alpha_{j,k}\theta_k(\xi^{(1)},\dots,\xi^{(n)}).
\end{align*}
Once again, since each of the summands in the expression for $\theta_j$, where $n+1\leq j\leq n+m$, are concave so is $\theta_j$.

\medskip
In summary, we have defined a family of non-cooperative games over the base space $B= G_n(\mathbb{R}^{\infty})\times G_m(\mathbb{R}^{\infty})$ with $N:=n+m$ players,
\begin{compactitem}[\quad --]
	\item the strategy spaces $E_1,\dots,E_n$ being subsets of the total space of the vector bundle $V_1=\cdots=V_n$, which is the pull-back of tautological vector bundle $E(\gamma^n)\longrightarrow G_n(\mathbb{R}^{\infty})$ with respect to the projection map $G_n(\mathbb{R}^{\infty})\times G_m(\mathbb{R}^{\infty})\longrightarrow G_n(\mathbb{R}^{\infty})$,
	\item the strategy spaces $E_{n+1},\dots,E_{n+m}$ being subsets of the total space of the vector bundle $V_{n+1}=\cdots=V_{n+m}$, which is the pull-back of tautological vector bundle $E(\gamma^m)\longrightarrow G_m(\mathbb{R}^{\infty})$ with respect to the projection map $G_n(\mathbb{R}^{\infty})\times G_m(\mathbb{R}^{\infty})\longrightarrow G_m(\mathbb{R}^{\infty})$, and
	\item fibers of strategy spaces being non-empty convex and compact.
\end{compactitem}
With the profit functions being concave all the assumptions of Corollary~\ref{cor:Main_result_01_02} are satisfied except for $B$ being a finite dimensional orientable manifold.
To resolve this issue we proceed as follows.

\medskip
Let $k$ be an integer with $1\leq k\leq N$ and let $d\gg 1$ be a very big integer which will depend on integer parameters $n$, $m$, and $k$.
Consider an ``approximation'' of $B$ by its subspace $B_d:=G_n(\mathbb{R}^{d})\times G_m(\mathbb{R}^{d})$, and let $V_1^d=\cdots=V_n^d$ and $V_{n+1}^d=\cdots=V_{n+m}^d$ be the restriction vector bundles of $V_1=\cdots=V_n$ and $V_{n+1}=\cdots=V_{n+m}$ to $B_d$, respectively.
Now, $B_d$ is a finite dimensional $\mathbb{F}_2$-orientable manifold.
Then, according to Corollary~\ref{cor:Main_result_01_02}, if the monomial
\begin{multline*}
	e(V_1^d)^{\xi(N,k)}\cdots e(V_N^d)^{\xi(N,k)}= \\ w_n(V_1^d)^{\xi(N,k)}\cdots w_n(V_n^d)^{\xi(N,k)}w_m(V_{n+1}^d)^{\xi(N,k)}\cdots w_n(V_{n+m}^d)^{\xi(N,k)}\neq 0
\end{multline*}
does not vanish in $H^*(B_d;\mathbb{F}_2)$, then there exists $(U,V)\in B_d$ such that the associated non-cooperative game has a $k$-lateral Nash equilibrium.
Here $w_i(V)$ denotes the $i$th Stiefel--Whitney class of the vector bundle $V$, and it is a well known fact that $w_{\dim(V)}(V)$ coincides with the $\mathbb{F}_2$ Euler class of $V$.

\medskip
Recall that the cohomology of $B$ can be described as a polynomial algebra without any truncations, that is
\[
	H^*(B;\mathbb{F}_2)\cong \mathbb{F}_2[w_1(\gamma^n),\dots, w_n(\gamma^n), w_1(\gamma^m),\dots, w_m(\gamma^m)],
\]
and that  $H^*(B_d;\mathbb{F}_2)$ ``approximates'' $H^*(B;\mathbb{F}_2)$.
Consequently, for every $n$, $m$, $1\leq k\leq n+m$, with $N=m+n$, there exists an integer $d\gg 1$ such that
\[
	w_n(V_n^d)^{\xi(N,k)}w_m(V_{n+1}^d)^{\xi(N,k)}\cdots w_n(V_{n+m}^d)^{\xi(N,k)}\neq 0.
\]
Now, according to Corollary~\ref{cor:Main_result_01_02}, with these parameters, there exists $(U,V)\in B_d\subseteq B$ such that the associated non-cooperative game has a $k$-lateral Nash equilibrium. Note that $k$ was arbitrary.

\medskip
Economically speaking, this means that there exists a subspace of available production factors $V^*$ and a subspace of commodities with planned production $U^*$ which guarantee that the state's economy admits an arbitrarily high-lateral Nash equilibrium.
Thus, if the state is able to ensure the availability of $V^*$ production factors and plan for the production of $U^*$ commodities, it will ensure the existence of an arbitrarily high-lateral equilibrium in its economy.
A higher-lateral equilibrium, by definition, disincentivizes even larger groups of players from deviating from the equilibrium strategy.
For the state, this would mean that no rational group of players would deviate from the equilibrium strategy, ensuring stability and predictability in the economy.

\end{document}